  \newtheorem{theorem}{Theorem}[section]
  \newtheorem{corollary}[theorem]{Corollary}
  \newtheorem{proposition}[theorem]{Proposition}
  \newtheorem{lemma}[theorem]{Lemma}
  \theoremstyle{definition}
  \newtheorem{definition}[theorem]{Definition}
  \newtheorem{remark}[theorem]{Remark}
  \numberwithin{equation}{section}
  \title[]
  {On the core and Walrasian expectations\\ 
  equilibrium in infinite dimensional\\ 
  commodity spaces}
  \author[A. Bhowmik]{Anuj Bhowmik}
  \address{School of Computing and Mathematical Sciences,
  Auckland University of Technology, Private Bag 92006, Auckland
  1142, New Zealand}
  \email{anuj.bhowmik@aut.ac.nz}
  \author[J. Cao]{Jiling Cao}
  \address{School of Computing and Mathematical Sciences,
  Auckland University of Technology, Private Bag 92006, Auckland
  1142, New Zealand}
  \email{jiling.cao@aut.ac.nz}
  \thanks{\hspace{-1.66em} \emph{JEL Classification Numbers:}
  D51, D82, D11.}
  \thanks{\noindent \emph{Keywords}. Asymmetric information 
  economy; Aubin non-dominated allocation; Private core; 
  Privately non-dominated allocation; Properness; Walrasian 
  expectations allocation.}
  \date{}
\begin{document}

  \begin{abstract}
  In this paper, we establish two different characterizations
  of Walrasian expectations allocations by the veto power of
  the grand coalition in an asymmetric information economy
  having finite numbers of agents and states of nature and
  whose commodity space is a Banach lattice. The first one deals
  with Aubin non-dominated allocations, and the other claims
  that an allocation is a Walrasian expectations allocation if
  and only if it is not privately dominated by the grand
  coalition, by considering perturbations of the original
  initial endowments in precise directions.
  \end{abstract}

  \maketitle

  \section{Introduction}

  The classical deterministic Arrow-Debreu-McKenzie model on an
  economic system consists of finitely many consumers, producers
  and commodities, refer to \cite{Arrow-Debreu:54} and 
  \cite{McKenzie:59}. In late of 1950's, Arrow and Debreu 
  introduced uncertainty into this deterministic model by adding 
  contingent claims, \cite[Chapter 7]{Debreu:59}. In this 
  improved model, agents make contracts contingent on the 
  realized state of nature known to all the agents. However, 
  such a model does not capture the idea of contracts under 
  asymmetric information as all agents face the same uncertainty. 
  To overcome this shortcoming, Radner \cite{Radner:68} 
  introduced economies with asymmetric information. In
  Radner's model, an economy consists of finitely many agents,
  each of whom is characterized by a state dependent utility
  function, a random initial endowment, a private information set
  and his prior belief; and agents arrange contingent contracts for
  trading commodities before they obtain any information about the
  realized state of nature. Analogous to the concept of a
  Walrasian equilibrium in the Arrow-Debreu-McKenzie model, 
  Radner also introduced the notion of a Walrasian expectations
  equilibrium for an asymmetric information economy so that the
  information of an agent places a restriction on his feasible
  trades: better information allows for more contingent trades,
  and each agent maximizes his ex ante expected utility subject
  to his budget constraint with respect to his private information.
  This individualistic behavior leads to a feasible redistribution
  of the initial endowments for each state of nature.

  In this paper, we continue the study on asymmetric information
  economies. In particular, we are interested in asymmetric
  information economies whose commodity spaces are infinite
  dimensional spaces. In Section \ref{sec:model}, we give a
  general description on a discrete model of an asymmetric
  information economy having finitely many of agents and states
  of nature, and whose commodity space is a Banach lattice. We
  also associate a continuum model with this discrete model.
  The continuum model has the equal treatment property. It is
  worth to mention that Tourky and Yannelis \cite{tourky:01} 
  and Podczeck \cite{Podczeck:03} constructed counterexamples 
  of economies to show that the classical core-Walras 
  equivalence in \cite{Aumann:64} fails whenever the commodity 
  space is a non-separable ordered Banach space. In both of 
  these two papers, the authors used the
  Continuum Hypothesis in set theory to construct an economy with
  uncountably many utility functions. Since the commodity spaces
  of economies in our paper are Banach lattices which are not
  necessarily separable, our theorems give some positive results
  for asymmetric information economies whose commodity spaces
  are non-separable.

  The concept of the private core of an asymmetric information
  economy was introduced in \cite{Yannelis:91}. Einy et al. 
  \cite{Einy-Moreno-Shitovitz:01}
  showed that under appropriate assumptions, the private core
  coincides with the set of Walrasian expectations allocations
  for an atomless economy whose commodity space is Euclidean
  space. Herv\'{e}s-Beloso et al. 
  \cite{Herves-Beloso-Moreno-Garcia-Yannelis:05b} established 
  a similar
  result for an equal treatment economy whose commodity space
  is $\ell_\infty$. By considering an atomless economy with
  asymmetric information, Evren and H\"{u}sseinov 
  \cite{Evern-Husseinov:08}
  extended these results to an economy whose commodity space
  is an ordered separable Banach space which has an interior
  point in its positive cone. In Section \ref{sec:privatecore},
  we establish a similar equivalence result for an equal
  treatment economy whose commodity space
  is a Banach lattice. In an atomless economy with a complete
  information and a finite dimensional commodity space, improving
  a result of \cite{Aumann:64}, Schmeidler \cite{Schmeidler:72} 
  and Grodal \cite{Global:72}
  showed that (i) if some coalition blocks an allocation, then
  there is also a blocking subcoalition with arbitrarily small
  measure; and (ii) the small coalition can be the union of at
  most $\ell$ +1 coalitions, each of which has measure and
  diameter less than an arbitrary small number $\epsilon> 0$,
  where $\ell$ is the number of commodities. In the same issue
  of Econometrica, Vind \cite{Vind:72} extended (i) to show that 
  if some
  coalition blocks then there is a blocking coalition with any
  measure less than the measure of the grand coalition. The first
  extension of the Schmeidler and Grodal's results to an infinite
  dimensional setting (the space $\ell_\infty$) were obtained in
  \cite{Herves-Moreno-Nunez-Pascoa:00}, where it was also showed
  that the Vind's result fails in the space $\ell_\infty$ under
  the standard assumptions. Herv\'{e}s-Beloso et al. 
  \cite{Herves-Beloso-Moreno-Garcia-Yannelis:05a}
  first extended Vind's theorem to an asymmetric information
  economy with a continuum of agents having the equal treatment
  property, and whose commodity space is Euclidean space. Later,
  Herv\'{e}s-Beloso et al. 
  \cite{Herves-Beloso-Moreno-Garcia-Yannelis:05b} extended Vind's 
  theorem to
  an asymmetric information economy with a continuum of agents
  having the equal treatment property and whose commodity space
  is $\ell_\infty$. All of these extension are established by
  using finite-dimensional Lyapunov's convexity theorem. Using
  an infinite dimensional extension of Lyapunov's convexity
  theorem, Evren and H\"{u}sseinov \cite{Evern-Husseinov:08} 
  further extended
  Vind's theorem to an atomless economy whose commodity space
  is an ordered Banach space having an interior point in its
  positive cone, with some additional assumption. Our second
  main result in Section \ref{sec:privatecore} is an extension
  of Vind's theorem to an asymmetric information economy with
  the equal treatment property, and whose commodity space is a
  Banach lattice.

  Addressing complete information economies with finitely many
  agents and commodities, Aubin \cite{Aubin:79} introduced the 
  ponder veto concept and showed that the core obtained by this 
  veto mechanism coincides with the Walrasian equilibria. 
  Aubin's approach was employed by Gabriella Graziano, Meo and
  Herv\'{e}s-Beloso et al. to characterize Walrasian
  expectations equilibria of asymmetric information economies.
  Herv\'{e}s-Beloso et al. \cite{Herves-Beloso-Moreno-Garcia-Yannelis:05a} 
  introduced the notion of
  Aubin non-dominated allocations. Gabriella Graziano and
  Meo \cite{Graziano-Meo:05} showed that the Aubin private core provides a
  complete characterization of Walrasian expectations
  allocations in an economy with a complete measure space of
  agents. They also showed that Aubin private core allocations
  in an asymmetric information economy with a complete measure
  space of agents and an ordered separable
  Banach space whose positive cone has an interior point as
  the commodity space can be characterized by privately
  non-dominated allocations in suitable associated economies.
  Herv\'{e}s-Beloso et al. 
  \cite{Herves-Beloso-Moreno-Garcia-Yannelis:05a, Herves-Beloso-Moreno-Garcia-Yannelis:05b} established similar
  results for asymmetric information economies with finitely
  many agents and states of nature, and ${\mathbb R}^\ell$ or
  $\ell_\infty$ as the commodity space.
  The proof of Herv\'{e}s-Beloso et al. was built on associated
  continuum economies and thus requires the validity of Vind's
  theorem, which is not the case in the proof of Gabriella
  Graziano and Meo \cite{Graziano-Meo:05}. In Section \ref{sec:Equitheodiseco},
  we adopt the approach in \cite{Herves-Beloso-Moreno-Garcia-Yannelis:05a, Herves-Beloso-Moreno-Garcia-Yannelis:05b}
  to characterize Walrasian expectations equilibria in terms of
  the private blocking power of the grand coalition. Since our
  economic model has finitely many agents and a Banach lattice
  as the commodity space, our results can be considered as
  extensions of those in \cite{Herves-Beloso-Moreno-Garcia-Yannelis:05a, Herves-Beloso-Moreno-Garcia-Yannelis:05b}.

  In the Appendix, an asymmetric information economy whose commodity
  space is a Banach lattice and which has infinitely many states
  of nature is discussed. Mathematical preliminaries and discussions
  on some assumptions are also provided in the Appendix. Of course,
  all of these mathematical preliminaries can be found in 
  \cite{Aliprantis-Border:05}.

  \section{Description of the model} \label{sec:model}

  In this section, we describe our model of an exchange economy
  with asymmetric information.

  \subsection{\it The discrete model}
  Our first model of an exchange economy is a discrete model
  $\mathcal E$ with (fixed) $n$ agents denoted by the set $N=\{1,
  ..., n\}$, like those ones considered in \cite{Radner:68, 
  Radner:82}. A measurable
  space $(\Omega, \mathcal{F})$ is used to describe the exogenous
  uncertainty of $\mathcal{E}$, where $\Omega$ is a finite set
  denoting the set of all possible states of nature
  and the $\sigma$-algebra $\mathcal{F}$ denotes the set of all
  events. The commodity space of $\mathcal{E}$ is a Banach lattice
  $Y$ with a partial order $\leq$. The economy $\mathcal{E}$ extends
  over two time periods $\tau = 0, 1$, and consumption takes place
  at $\tau= 1$. At $\tau= 0$, there is uncertainty over the states
  of nature and agents make contracts that may be contingent on the
  realized state of nature at $\tau= 1$. More precisely, 
  $\mathcal{E}$ is expressed by
  ${\mathcal E} = \{((\Omega, \mathcal{F}), Y_+,
  \mathcal{F}_{i}, U_{i}, a_{i}, q_i): i\in N\}$,
  where the positive cone $Y_+$ of $Y$ is the \emph{consumption set}
  in each state of nature $\omega\in \Omega$ for every agent $i\in N$;
  ${\mathcal F}_i$ is a partition of $\Omega$ representing the
  \emph{private information} of agent $i$; $U_{i}:\Omega \times
  Y_+ \to \mathbb R$ is the \emph{random utility function} of
  agent $i$; $a_{i}: \Omega\rightarrow Y_+$ is the \emph{random
  initial endowment} of agent $i$, assumed to be constant on
  elements of ${\mathcal F}_i$; and $q_i$ is a probability
  measure on $\Omega$ giving the \emph{prior} of agent $i$,
  assumed to be positive on all elements of $\Omega$. For any
  random consumption bundle $x:\Omega\rightarrow Y_+$, the
  \emph{ex ante expected utility} of agent $i$ is given by
  $V_{i}(x) = \sum_{\omega\in \Omega}U_{i} (\omega,
  x(\omega))q_i(\omega)$.

  An \emph{assignment} in $\mathcal{E}$ is a function $x= (x_1,
  ..., x_n)$ which associates to every agent $i$ a random
  consumption bundle $x_i: \Omega \to Y_+$, equivalently written
  as $x_i \in (Y_+)^\Omega$. We call a function
  with domain $\Omega$, constant on elements of $\mathcal{F}_{i}$,
  \emph{$\mathcal{F}_{i}$-measurable}, although measurability
  is meant with respect to the $\sigma$-algebra generated by the
  partition. Put
  $L_i = \{ x_{i} \in (Y_+)^\Omega: x_{i} \mbox{ is }
  \mathcal{F}_{i}\mbox{-measurable}\}$.
  An assignment $x= (x_1, ..., x_n)$ in $\mathcal{E}$ is called
  an \emph{allocation} if $x_i\in L_i$ for all $i\in N.$
  An allocation $x$ is said to be \emph{feasible} if
  $\sum_{i= 1}^{n}x_{i}(\omega)\leq \sum_{i= 1}^{n}a_{i}(\omega)$
  for all $\omega \in \Omega$. Further, an allocation $x$ of
  $\mathcal{E}$ is \emph{privately dominated} if there exists
  a feasible allocation $y= (y_1,...,y_n)$ such that $V_i(y_i)>
  V_i(x_i)$ for all $i\in N$. A feasible allocation $x$ of
  $\mathcal E$ is called \emph{privately Pareto optimal} if it
  is not privately dominated. A \emph{price system} is an
  $\mathcal{F}$-measurable, non-zero function $\pi:\Omega \to
  Y_+^\ast$, where $Y_+^\ast$ is the positive cone of the
  norm-dual space $Y^\ast$ of $Y$. The \emph{budget set} $B_{i}
  (\pi)$ of agent $i$ under $\pi$ is defined as
  \[
  B_{i}(\pi) = \left\{ x_{i}\in L_i:\sum_{\omega\in
  \Omega} \langle\pi(\omega), x_{i}(\omega)\rangle\leq
  \sum_{\omega\in \Omega}\langle\pi(\omega), a_{i}(\omega)
  \rangle\right\}.
  \]
  Given a feasible allocation $x$ and a price system $\pi$ in
  $\mathcal E$, the pair $(x, \pi)$ is called a \emph{Walrasian
  expectations quasi-equilibrium} of $\mathcal{E}$ in the sense
  of Radner if
  \begin{itemize}
  \item[(2.1)] for all $i\in N$, $x_i \in B_i(\pi)$ and the
  consumption bundle $x_i$ maximizes $V_i$ on $B_i(\pi)$
  whenever $\sum_{\omega\in \Omega}\langle\pi(\omega),
  a_i(\omega)\rangle \not= 0$;
  \item[(2.2)]
  $\sum_{\omega\in \Omega} \left \langle\pi(\omega),\
  \sum_{i\in N} x_i(\omega)\right\rangle=
  \sum_{\omega\in \Omega} \left \langle\pi(\omega),\
  \sum_{i\in N} a_i(\omega)\right\rangle$.
  \end{itemize}
  If $\sum_{\omega \in \Omega}\langle\pi(\omega),
  a_i(\omega)\rangle\neq 0$ for some $i\in N$, then $(x, \pi)$ is
  called \emph{non-trivial}. Further, if
  \begin{itemize}
  \item[(2.1$^\prime$)] for all $i\in N$, $x_i \in B_i(\pi)$ and
  $x_i$ maximizes $V_i$ on $B_i(\pi)$,
  \end{itemize}
  and (2.2) hold, $(x, \pi)$ is called a \emph{Walrasian expectations
  equilibrium} of $\mathcal{E}$ in the sense of Radner, and
  $x$ is called a \emph{Walrasian expectations allocation}. Note that
  if $(x, \pi)$ is a Walrasian expectations quasi-equilibrium, then
  feasibility of $x$ and (2.2) together imply
  \begin{itemize}
  \item[(2.3)] $\left \langle\pi(\omega),\
  \sum_{i\in N} x_i(\omega)\right\rangle=
   \left \langle\pi(\omega),\
  \sum_{i\in N} a_i(\omega)\right\rangle$ for each $\omega\in \Omega$.
  \end{itemize}

  Throughout this paper, we put the following assumptions on our
  discrete model $\mathcal E$. These assumptions or some of their
  combinations are used in different places of the paper.

  \begin{itemize}
  \item[(A$_1$)] \emph{Continuity}. For each $i\in N$ and $\omega
  \in\Omega$, $U_{i}(\omega, \cdot)$ is norm-continuous.

  \item[(A$_2$)] \emph{Monotonicity}. For each $i\in N$
  and $\omega\in \Omega$, $U_{i}(\omega, \cdot)$ is monotone in the
  sense that if $x, y\in Y_+$ with $y> 0,$ then $U_i(\omega, x+y)>
  U_i(\omega, x)$.

  \item[(A$_3$)] \emph{Concavity}. For each $i\in N$ and $\omega\in
  \Omega$, $U_{i}(\omega, \cdot)$ is concave.

  \item[(A$_4$)] \emph{Quasi-interiority}. For each $\omega\in \Omega$,
  $\sum_{i\in N}a_i(\omega) \gg 0$.

  \item[(A$_4^\prime$)] \emph{Positivity}. For each $\omega\in \Omega$,
  $\sum_{i\in N}a_i(\omega)> 0$.

  \item[(A$_5$)] \emph{Strong positivity}. For each $i\in N$,
  $\inf\{a_i(\omega): \omega\in \Omega\}> 0$.

  \item[(A$_6$)] \emph{Stability}. There exists an element $\hat{a}
  \in Y_+$ such that $L\left(\sum_{i\in N}a_i(\omega)\right)=
  L(\hat{a})$ for each $\omega\in \Omega$.

  \item[(A$_7$)] \emph{Irreducibility}. For each feasible allocation
  $x$ of $\mathcal E$ and any two non-empty disjoint subsets $N_1,
  N_2$ of $N$ with $N_1 \cup N_2 =N$, there is a $y=(y_i)_{i\in N_2}$
  such that $y_i\in L_i$ and $V_i(y_i)> V_i(x_i)$ for all $i\in N_2$,
  and $\sum_{i\in N_1} a_i(\omega)+ \sum_{i\in N_2}x_i(\omega)\geq
  \sum_{i\in N_2} y_i(\omega)$ for each $\omega\in \Omega$.
  \end{itemize}

  \begin{remark}
  Under (A$_1$), $V_{i}$ is continuous with respect to the product
  topology induced by the norm. Under (A$_2$), $V_i$
  is monotone in the sense that if $x, y\in (Y_+)^\Omega$ with
  $y(\omega)> 0$ for some $\omega\in \Omega$, then $V_i(x+y)> V_i(x)$.
  Under (A$_3$), $V_{i}$ is concave. (A$_6$) implies that the
  aggregate endowments do not vary too much across states. (A$_7$)
  is similar to ($A.3$) in \cite{Einy-Moreno-Shitovitz:01}. 
  When $Y=\ell_\infty$, assumptions (A$_2$)-(A$_4$) are similar 
  to (A.2)-(A.4) in \cite{Herves-Beloso-Moreno-Garcia-Yannelis:05b}. 
  Further, assumptions (A$_1$)-(A$_4$) are similar to (A.2) and 
  (A.3) in \cite{Graziano-Meo:05}, and (A.A3)-(A.A5) in 
  \cite{Evern-Husseinov:08} (except for the convexity). \qed
  \end{remark}

  The following proposition, which is a modification of Proposition
  $3.2$ in \cite{Einy-Moreno-Shitovitz:01}, shall be used in the sequel.

  \begin{proposition}\label{prop:Irreducibilty}
  If $\mathcal E$ satisfies \emph{(A$_7$)}, then every
  non-trivial Walrasian expectations quasi-equilibrium of
  $\mathcal E$ is a Walrasian expectations equilibrium.
  \end{proposition}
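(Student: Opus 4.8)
The plan is to deduce from non-triviality and irreducibility that every agent has a strictly positive budget at $\pi$; once that is established, clause (2.1) in the definition of a Walrasian expectations quasi-equilibrium becomes literally clause (2.1$^\prime$), and since feasibility of $x$ and (2.2) are assumed, $(x,\pi)$ is then a Walrasian expectations equilibrium with nothing further to check.

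First I would partition the agents by wealth: set $N_1=\{i\in N:\sum_{\omega\in\Omega}\langle\pi(\omega),a_i(\omega)\rangle=0\}$ and $N_2=N\setminus N_1$. Since $\pi(\omega)\in Y_+^\ast$ and $a_i(\omega)\in Y_+$, each term $\langle\pi(\omega),a_i(\omega)\rangle$ is nonnegative, so $N_2$ consists exactly of the agents with strictly positive wealth, and non-triviality gives $N_2\neq\emptyset$. The aim is to show $N_1=\emptyset$, so suppose not; then $\{N_1,N_2\}$ is a partition of $N$ into two non-empty subsets. The key move is to invoke (A$_7$) for the feasible allocation $x$ with this partition, but with $N_2$ --- the \emph{positive-wealth} agents --- in the role of the improving coalition; feeding $N_1$ into that role instead would force an awkward redistribution of non-measurable resource surpluses, whereas with $N_2$ in that role the quasi-equilibrium maximality is available for every member of the improving coalition.

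Thus (A$_7$) furnishes $y=(y_i)_{i\in N_2}$ with $y_i\in L_i$, $V_i(y_i)>V_i(x_i)$ for all $i\in N_2$, and $\sum_{i\in N_1}a_i(\omega)+\sum_{i\in N_2}x_i(\omega)\geq\sum_{i\in N_2}y_i(\omega)$ for every $\omega\in\Omega$. Applying $\langle\pi(\omega),\cdot\rangle$ to this order inequality (valid because $\pi(\omega)\geq 0$), summing over $\omega$, discarding the zero contribution $\sum_{i\in N_1}\sum_{\omega\in\Omega}\langle\pi(\omega),a_i(\omega)\rangle=0$, and using $x_i\in B_i(\pi)$ for $i\in N_2$, one obtains
\[
\sum_{i\in N_2}\sum_{\omega\in\Omega}\langle\pi(\omega),y_i(\omega)\rangle\ \leq\ \sum_{i\in N_2}\sum_{\omega\in\Omega}\langle\pi(\omega),a_i(\omega)\rangle .
\]
On the other hand, every $i\in N_2$ has positive wealth, so (2.1) says $x_i$ maximizes $V_i$ on $B_i(\pi)$; since $y_i\in L_i$ and $V_i(y_i)>V_i(x_i)$, the bundle $y_i$ cannot belong to $B_i(\pi)$, that is, $\sum_{\omega\in\Omega}\langle\pi(\omega),y_i(\omega)\rangle>\sum_{\omega\in\Omega}\langle\pi(\omega),a_i(\omega)\rangle$. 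Summing this strict inequality over the non-empty set $N_2$ contradicts the displayed inequality, so $N_1=\emptyset$ and the proof concludes as in the first paragraph.

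I do not expect a genuine obstacle: the argument is essentially bookkeeping, and the only step requiring care is turning the physical resource inequality supplied by (A$_7$) into the value inequality above, which rests on the positivity of $\pi(\omega)$ in $Y_+^\ast$, on the vanishing of the aggregate $N_1$-wealth, and on $x_i\in B_i(\pi)$. In particular no use is made of continuity, concavity, or any infinite-dimensional machinery, so the argument is insensitive to $Y$ being a possibly non-separable Banach lattice.
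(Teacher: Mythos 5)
Your proposal is correct and follows essentially the same route as the paper's proof: both split $N$ into the zero-wealth set $N_1$ and the positive-wealth set $N_2$, apply (A$_7$) with $N_2$ as the improving coalition, and derive a contradiction by pricing the resource inequality (where the $N_1$-endowments contribute nothing) against the strict budget violation $\sum_{\omega\in\Omega}\langle\pi(\omega),y_i(\omega)\rangle>\sum_{\omega\in\Omega}\langle\pi(\omega),a_i(\omega)\rangle$ forced by quasi-equilibrium maximality for each $i\in N_2$. Your write-up merely makes explicit a step the paper leaves implicit (namely that $x_i\in B_i(\pi)$ converts the priced resource bound into a bound by aggregate $N_2$-wealth); there is no substantive difference.
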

  \begin{proof}
  Let $(x, \pi)$ be a non-trivial Walrasian expectations
  quasi-equilibrium of $\mathcal E$.
  Then $N_2= \{i\in N: \sum_{\omega\in \Omega}\langle \pi(\omega),
  a_i(\omega)\rangle\neq 0\} \neq \emptyset$. Let $N_1=N- N_2$.
  If $N_2= N$, there is nothing to verify. Otherwise, $N_1\neq
  \emptyset$. By (A$_7$), there is a $y=(y_i)_{i\in N_2}$ such
  that $y_i\in L_i$ and $V_i(y_i)> V_i(x_i)$ for all $i\in N_2$,
  and $\sum_{i\in N_1}
  a_i(\omega)+\sum_{i\in N_2} x_i(\omega)\geq \sum_{i\in N_2}
  y_i(\omega)$ for all $\omega\in \Omega$. Then $\sum_{i\in N_2}
  \sum_{\omega\in \Omega}\langle\pi(\omega),
  x_i(\omega)\rangle\geq \sum_{i\in N_2}\sum_{\omega\in \Omega}
  \langle\pi(\omega), y_i(\omega)\rangle$, and
  $\sum_{\omega\in \Omega}\langle\pi(\omega), x_i(\omega)\rangle<
  \sum_{\omega\in \Omega}\langle\pi(\omega), y_i(\omega)\rangle$
  for each $i\in N_2$, which is a contradiction.  
  \end{proof}

  \subsection{\it A continuum interpretation}

  Now, we associate a continuum economy $\mathcal{E}_c$ with the
  discrete model $\mathcal{E}$, just like that in
  \cite{García-Cutrín-Hervés-Beloso:93} and 
  \cite{Herves-Beloso-Moreno-Garcia-Yannelis:05a,
  Herves-Beloso-Moreno-Garcia-Yannelis:05b}. The set of agents of
  $\mathcal{E}_c$ is the unit interval $I= [0, 1]$ endowed with
  the Lebesgue measure $\mu$. We write $I =\bigcup_{i=1}^n I_i,$
  where $I_i=\left[\frac{i-1}{n}, \frac{i}{n}\right)$ for $i=
  1,...,n-1$, and $I_n=\left[\frac{n-1}{n}, 1\right]$. Each agent
  $t\in I_i$ is characterized by the private information set
  $\mathcal{F}_t=\mathcal{F}_i$; the consumption set $Y_+$ in
  each state $\omega\in \Omega$; the random initial endowment
  $a(t,\cdot)=a_i$; the random utility function $U_t = U_i$;
  and the prior $q_t= q_i$. The ex ante expected utility
  function of every agent $t\in I_i$ is $V_t=V_i$. Thus,
  $\mathcal{E}_c$ can be expressed as
  $\mathcal{E}_c =\{((\Omega, \mathcal{F}), Y_+, I,
  \mathcal{F}_i,
  a_i, V_i,q_i): i\in N\}$.
  We call $I_i$ the set of \emph{type $i$ agents}, and
  $\mathcal{E}_c$ an economy with the
  \emph{equal treatment property}. An \emph{assignment}
  in $\mathcal{E}_c$ is a function $f:I \times \Omega\rightarrow
  Y_+$ such that $f(\cdot,\omega) \in L_1 (\mu, Y_+)$ for all
  $\omega\in \Omega$, where $L_1 (\mu, Y_+)$ is the set of all
  Bochner integrable functions from $I$ into $Y_+$. Put $L_t =
  L_i$ for each $t \in I_i$ and $i\in N$. An
  assignment $f$ in $\mathcal{E}_c$ is called an \emph{allocation}
  if $f(t,\cdot)\in L_t$ for almost all $t\in I$. An allocation
  $f$ in $\mathcal{E}_c$ is {\it feasible} if $\int_I f(t, \omega)
  d\mu(t) \leq \int_I a(t, \omega) d\mu(t)$ for all $\omega\in
  \Omega$. A \emph{coalition} in $\mathcal{E}_c$ is a Borel
  measurable subset $S \subseteq I$ with $\mu(S)> 0.$ A coalition
  $S$ \emph{privately blocks an allocation} $f$ in $\mathcal{E}_c$
  if there is a function $g: S\times \Omega\rightarrow Y_+$ such that
  $g(t, \cdot) \in L_t$ and  $V_t(g(t,\cdot))> V_t(f(t,\cdot))$
  for all $t\in S$, and $\int_{S}g(t, \omega)d\mu(t)\leq \int_{S}
  a(t, \omega)d\mu(t)$ for all $\omega\in \Omega$. The \emph{private
  core} of $\mathcal{E}_c$ is the set of all feasible allocations
  which are not privately blocked by any coalition.

  Given a feasible allocation $f$ and a price system $\pi$ in
  $\mathcal{E}_c$, the budget set of an agent $t\in I$ is
  $B_t(\pi)=B_i(\pi)$ if $t\in I_i$ and $i\in N$. The pair
  $(f, \pi)$ is called a \emph{Walrasian expectations
  quasi-equilibrium} of $\mathcal{E}_c$ in the sense of Radner
  if
  \begin{enumerate}
  \item[(2.4)] for all $t\in I$,  $f(t,\cdot) \in B_t(\pi)$ and
  $f(t,\cdot)$ maximizes $V_t$ on $B_t(\pi)$ whenever
  $\sum_{\omega\in \Omega}\langle \pi(\omega), a(t, \omega)
  \rangle\not= 0$;
  \item[(2.5)] $\sum_{\omega\in \Omega} \left\langle\pi(\omega),
  \int_I f(t, \omega)d\mu(t) \right\rangle= \sum_{\omega\in
  \Omega} \left\langle\pi(\omega), \int_I a(t, \omega)d\mu(t)
  \right\rangle$.
  \end{enumerate}
  If $\sum_{\omega\in\Omega}\langle\pi(\omega), a(t, \omega)
  \rangle\neq 0$ for all $t$ in some coalition $S\subseteq I$,
  then $(f, \pi)$ is called \emph{non-trivial}. Further, if
  \begin{enumerate}
  \item[(2.4$^\prime$)] for all $t\in I$, $f(t,\cdot)
  \in B_t(\pi)$ and $f(t, \cdot)$ maximizes $V_t$ on $B_t(\pi)$
  \end{enumerate}
  and (2.5) hold, then $(f, \pi)$ is called a \emph{Walrasian
  expectations equilibrium} of $\mathcal{E}_c$ in the sense of
  Radner, and $f$ is called a \emph{Walrasian expectations
  allocation}. An allocation $f$ in $\mathcal{E}_c$ can be
  interpreted as an allocation $x$ in $\mathcal{E},$ where
  $x_i =n\int_{I_i}f(t, \cdot)d\mu(t)$ for all $i\in N$.
  Conversely, an allocation $x$ in $\mathcal{E}$ can be
  interpreted as an allocation $f$ in $\mathcal{E}_c$, where
  $f$ is the simple function given by $f(t, \cdot)= x_i$
  for all $t\in I_i$ and $i\in N$.

  Analogous to Theorem 1 in \cite{García-Cutrín-Hervés-Beloso:93}
  and Theorem 3.1 in  \cite{Herves-Beloso-Moreno-Garcia-Yannelis:05a,
  Herves-Beloso-Moreno-Garcia-Yannelis:05b},
  our next result shows that the discrete and continuum
  approaches can be considered equivalent with respect to
  Walrasian expectations (quasi-)equilibria.

  \begin{proposition}\label{prop:Equivalency}
  Assume that $\mathcal{E}$ satisfies \emph{(A$_3$)}. If
  $(x,\pi)$ is a non-trivial Walrasian
  expectations quasi-equilibrium of $\mathcal{E}$, then $(f,
  \pi)$ is a non-trivial Walrasian expectations quasi-equilibrium
  of $\mathcal{E}_c$, where $f(t,\cdot)= x_i$ if $t\in I_i$.
  Conversely, if $(f,\pi)$ is a non-trivial Walrasian
  expectations quasi-equilibrium of $\mathcal{E}_c$, then $(x,
  \pi)$ is a non-trivial Walrasian expectations quasi-equilibrium
  of $\mathcal{E}$, where $x_i= n \int_{I_i}f(t,\cdot) d\mu(t)$.
  \end{proposition}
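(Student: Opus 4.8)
The plan is to shuttle between the two models using the equal‑treatment structure of $\mathcal{E}_c$: on a type‑$i$ slice $I_i$ the primitives coincide with those of agent $i$ in $\mathcal{E}$, so almost every defining relation of one model passes to the other after multiplying or dividing by $n$. For the implication ``$\mathcal{E}\Rightarrow\mathcal{E}_c$'', let $(x,\pi)$ be a non‑trivial Walrasian expectations quasi‑equilibrium of $\mathcal{E}$ and put $f(t,\cdot)=x_i$ for $t\in I_i$. Then $\int_I f(t,\omega)\,d\mu(t)=\tfrac1n\sum_{i\in N}x_i(\omega)$, and the same identity holds for $a$, so feasibility of $f$ is feasibility of $x$ scaled by $1/n$ and condition (2.5) is condition (2.2) scaled by $1/n$. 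Since $B_t(\pi)=B_i(\pi)$, $V_t=V_i$ and $a(t,\cdot)=a_i$ for $t\in I_i$, we get $f(t,\cdot)=x_i\in B_i(\pi)$, and whenever $\sum_{\omega\in\Omega}\langle\pi(\omega),a_i(\omega)\rangle\neq0$ the bundle $x_i$ maximizes $V_i=V_t$ on $B_i(\pi)=B_t(\pi)$ by (2.1); finally, non‑triviality of $(x,\pi)$ yields a type $i$ with $\sum_{\omega\in\Omega}\langle\pi(\omega),a_i(\omega)\rangle\neq0$, and then the coalition $I_i$ witnesses non‑triviality of $(f,\pi)$. This direction uses no assumption.

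For ``$\mathcal{E}_c\Rightarrow\mathcal{E}$'', let $(f,\pi)$ be a non‑trivial Walrasian expectations quasi‑equilibrium of $\mathcal{E}_c$ and set $x_i=n\int_{I_i}f(t,\cdot)\,d\mu(t)$. Since $f(t,\cdot)$ is $\mathcal{F}_i$‑measurable for a.e.\ $t\in I_i$ and $Y_+$ is norm‑closed and convex, $x_i\in L_i$. Multiplying the feasibility relation and condition (2.5) for $f$ by $n$ gives feasibility of $x$ and condition (2.2). Because each $\pi(\omega)\in Y^\ast$ commutes with the Bochner integral, integrating $\sum_{\omega\in\Omega}\langle\pi(\omega),f(t,\omega)\rangle\le\sum_{\omega\in\Omega}\langle\pi(\omega),a_i(\omega)\rangle$ over $I_i$ gives $x_i\in B_i(\pi)$. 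Non‑triviality transfers because the coalition $S$ on which $t\mapsto\sum_{\omega\in\Omega}\langle\pi(\omega),a(t,\omega)\rangle$ is non‑zero meets some $I_i$; picking $t\in S\cap I_i$ shows $\sum_{\omega\in\Omega}\langle\pi(\omega),a_i(\omega)\rangle\neq0$.

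It remains to verify condition (2.1) for $(x,\pi)$, and this is where (A$_3$) enters. Fix $i$ with $\sum_{\omega\in\Omega}\langle\pi(\omega),a_i(\omega)\rangle\neq0$; since $a(t,\cdot)=a_i$ on $I_i$, this budget is non‑zero for every $t\in I_i$, so by (2.4) the bundle $f(t,\cdot)$ maximizes $V_t=V_i$ on $B_t(\pi)=B_i(\pi)$ for a.e.\ $t\in I_i$, and hence $V_i(f(t,\cdot))$ equals the finite number $M_i:=\sup\{V_i(z):z\in B_i(\pi)\}$ for a.e.\ such $t$. Normalizing $\mu$ on $I_i$ to a probability measure, so that $x_i=\int_{I_i}f(t,\cdot)\,d(n\mu)(t)$, and using that $V_i$ is concave by (A$_3$), Jensen's inequality for the Bochner integral gives $V_i(x_i)\ge\int_{I_i}V_i(f(t,\cdot))\,d(n\mu)(t)=M_i$; together with $V_i(x_i)\le M_i$, which holds because $x_i\in B_i(\pi)$, this forces $V_i(x_i)=M_i$, i.e.\ $x_i$ maximizes $V_i$ on $B_i(\pi)$. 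I expect the main obstacle to be exactly this last inequality: justifying Jensen's inequality for a $(Y_+)^\Omega$‑valued Bochner integral against a merely concave $V_i$ amounts to producing a continuous affine majorant of $V_i$ that is tight at $x_i$ (equivalently, non‑emptiness of the superdifferential $\partial V_i(x_i)$), a functional‑analytic point best handled through the preliminaries collected in the Appendix; the remaining steps are the routine bookkeeping dictated by the equal‑treatment identities.
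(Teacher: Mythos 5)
The paper itself omits the proof of Proposition~\ref{prop:Equivalency} as ``straightforward,'' so there is no official argument to compare against; what matters is whether your proof closes. The first direction and the bookkeeping in the second (membership $x_i\in L_i$ via closed convexity of $Y_+$, feasibility and (2.2) by scaling, $x_i\in B_i(\pi)$ by integrating the budget inequality, and the transfer of non-triviality) are all correct. The one step that carries real content is the maximization property of $x_i$, and there you have left a genuine gap: you reduce it to Jensen's inequality for the Bochner integral and then note that this ``amounts to producing a continuous affine majorant of $V_i$ tight at $x_i$,'' i.e.\ $\partial V_i(x_i)\neq\emptyset$, without establishing it. That route is actually problematic in this paper's setting: the superdifferential of a concave function at a point is guaranteed non-empty (via separation of the hypograph) only when the point is interior to the domain, and here the domain is $(Y_+)^\Omega$, whose interior is empty precisely in the cases the paper cares most about ($Y=\ell_p$, $L_p$, $M(K)$). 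So the functional-analytic fact you are deferring to is not available in general.

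The step can be closed more elementarily, without any supergradient. Since $f|_{I_i}$ is Bochner integrable, it is essentially separably valued, and the normalized integral $x_i=\int_{I_i}f(t,\cdot)\,d(n\mu)(t)$ lies in the norm closure of the convex hull of $\{f(t,\cdot):t\in I_i'\}$ for a full-measure $I_i'\subseteq I_i$ on which $V_i(f(t,\cdot))=M_i$. Every finite convex combination $\sum_k\lambda_k f(t_k,\cdot)$ of such points satisfies $V_i\bigl(\sum_k\lambda_k f(t_k,\cdot)\bigr)\geq\sum_k\lambda_k M_i=M_i$ by (A$_3$) alone, so $x_i$ is a norm limit of points where $V_i\geq M_i$; norm-continuity of $V_i$ then gives $V_i(x_i)\geq M_i$, and $V_i(x_i)\leq M_i$ from $x_i\in B_i(\pi)$ finishes the argument. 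Note this uses (A$_1$) in addition to (A$_3$); the statement of the proposition lists only (A$_3$), but (A$_1$) is assumed in every place the proposition is invoked, and some continuity hypothesis does appear to be needed --- concavity by itself does not force the barycenter of a family of maximizers to be a maximizer when the argmax set is convex but not closed.
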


  Since the proof of Proposition \ref{prop:Equivalency} is
  straightforward, we omit it.

  \begin{remark} \label{rem:discont}
  A similar conclusion holds if ``non-trivial Walrasian expectations
  quasi-equilibrium" is replaced with ``Walrasian expectations
  equilibrium". \qed
  \end{remark}

  %%%%%%%%%%%%%%%%%%%%%%%%%%%%%%%%%%%%%%%%%%%%%%%%%%%%%%%%%
  \section{Characterizations of private cores of equal
  treatment economies} \label{sec:privatecore}
  %%%%%%%%%%%%%%%%%%%%%%%%%%%%%%%%%%%%%%%%%%%%%%%%%%%%%%%%%

  In this section, we establish a relation between the private
  core and the set of Walrasian expectations allocations in the
  setting of equal treatment, and give an extension of Vind's
  theorem. These two results allow us to obtain our main
  theorems in Section \ref{rem:fuzzycoreequilibria2}.

  \subsection {\it Equivalence results} \label{sec:private core}

  Evren and H\"{u}sseinov \cite{Evern-Husseinov:08} provided 
  an equivalence theorem
  between the private core and the set of Walrasian
  expectations allocations in an economy whose commodity space is
  an ordered separable Banach space having an interior point in
  its positive cone. Next, we give a similar result for the case
  that the commodity space is a Banach lattice.

  \begin{theorem}\label{thm:core-wal1}
  Assume that the commodity space of $\mathcal{E}$ has an
  interior point in its positive cone. Let $f$ be a feasible
  allocation in $\mathcal{E}_c$ such that $f(t,\cdot)= x_i$
  for all $t\in I_i$ and $i\in N$. Under \emph{(A$_1$), (A$_2$)}
  and \emph{(A$_4$)}, if $f$ is in the private core of
  $\mathcal{E}_c$, then $(f, \pi)$ is a non-trivial Walrasian
  expectations quasi-equilibrium of $\mathcal{E}_c$ for some
  non-zero $\pi: \Omega \to Y^\ast_+$.
  \end{theorem}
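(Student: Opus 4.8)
The plan is to follow the classical Debreu--Scarf / Aubin style argument adapted to the asymmetric-information setting, exploiting the equal treatment structure to reduce everything to a finite-dimensional-flavoured separation argument inside the Banach lattice. First I would pass from the continuum economy to the discrete economy: since $f(t,\cdot)=x_i$ on $I_i$, the fact that $f$ lies in the private core of $\mathcal{E}_c$ translates into a blocking condition on the discrete economy $\mathcal{E}$ with \emph{Aubin-type} (rational, then real) coalitional weights, i.e. there is no vector $(\theta_i)_{i\in N}$ of nonnegative weights, not all zero, and no $(y_i)_{i\in N}$ with $y_i\in L_i$, $V_i(y_i)>V_i(x_i)$ for all $i$ with $\theta_i>0$, and $\sum_i \theta_i y_i(\omega)\le \sum_i \theta_i a_i(\omega)$ for every $\omega$. (One must be a little careful: blocking subcoalitions of $I_i$ realise only \emph{rational} weights $\theta_i$, but by continuity of each $V_i$ (assumption (A$_1$)) and monotonicity (A$_2$), if a real-weight blocking existed one could perturb to a rational-weight one; this is the standard density step.)

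Next I would set up the separation. For each $\omega\in\Omega$ consider, in the Banach lattice $Y$, the set
\[
G_\omega=\Big\{\,\textstyle\sum_{i\in N}\theta_i\big(y_i(\omega)-a_i(\omega)\big): \theta_i\ge 0,\ y_i\in L_i,\ V_i(y_i)>V_i(x_i)\ \text{for all }i\ \text{with }\theta_i>0\,\Big\},
\]
and let $G=\bigcup\text{(over appropriate tuples, state by state, glued)}$; more precisely one works with the subset $Z$ of $Y^\Omega$ of all vectors $\big(\sum_i\theta_i(y_i(\omega)-a_i(\omega))\big)_{\omega\in\Omega}$ realised by a \emph{single} common weight vector $(\theta_i)$ and a common choice $(y_i)$. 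Using concavity of each $V_i$ (which follows from (A$_3$)) together with (A$_2$), the set $Z-(Y_+)^\Omega$ (subtracting the positive cone in each coordinate to account for free disposal) is convex, and the core/blocking hypothesis says it does not meet the negative-of-a-neighbourhood-of-$0$ region in a way that would contradict optimality — concretely, $0$ is not in the \emph{interior} of $Z-(Y_+)^\Omega$. Here is exactly where the hypothesis that $Y_+$ has an interior point is used, and where (A$_4$) (quasi-interiority of aggregate endowment) enters: it guarantees that $Z-(Y_+)^\Omega$ has nonempty interior (one can move $y_i$ slightly up using the interior point of $Y_+$ and preserve the strict utility inequalities by (A$_1$)--(A$_2$)), so the Hahn--Banach separation theorem in the Banach lattice applies and yields a nonzero continuous linear functional, i.e. a price system $\pi:\Omega\to Y^\ast$, separating $0$ from $Z$.

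Then I would extract the equilibrium properties from the separating functional. Separation gives $\sum_{\omega}\langle\pi(\omega),\sum_i\theta_i(y_i(\omega)-a_i(\omega))\rangle\ge 0$ for all admissible $(\theta_i,y_i)$; since we may move each coordinate of $y_i$ freely upward, monotonicity of the separation forces $\pi(\omega)\in Y^\ast_+$ for every $\omega$, and $\pi\ne 0$. Taking $\theta_i=1$ for a single agent $i$ and $\theta_j=0$ otherwise shows: for each $i$, if $V_i(y_i)>V_i(x_i)$ then $\sum_\omega\langle\pi(\omega),y_i(\omega)\rangle\ge\sum_\omega\langle\pi(\omega),a_i(\omega)\rangle$, i.e. no strictly preferred bundle is strictly cheaper than the endowment. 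Feasibility of $f$ (hence $\sum_i x_i(\omega)\le\sum_i a_i(\omega)$) together with $\pi\ge 0$ gives $\sum_\omega\langle\pi(\omega),\sum_i x_i(\omega)\rangle\le\sum_\omega\langle\pi(\omega),\sum_i a_i(\omega)\rangle$; the reverse inequality comes by summing the individual "no cheaper improvement" inequalities after approximating $x_i$ from strictly-preferred bundles via (A$_1$)--(A$_2$), which yields $\sum_\omega\langle\pi(\omega),x_i(\omega)\rangle\ge\sum_\omega\langle\pi(\omega),a_i(\omega)\rangle$ for each $i$, hence equality in aggregate, i.e. (2.2), and in fact $x_i\in B_i(\pi)$ with equality. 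Non-triviality — that $\sum_\omega\langle\pi(\omega),a_i(\omega)\rangle\ne 0$ for some $i$ — follows because $\pi\ne 0$ and (A$_4$) makes $\sum_i a_i(\omega)\gg 0$, so $\sum_\omega\langle\pi(\omega),\sum_i a_i(\omega)\rangle>0$. Finally, the standard quasi-equilibrium argument (for agents with positive wealth, a strictly preferred bundle cannot be affordable, using a cheaper-point/continuity argument with (A$_1$)) upgrades "no strictly cheaper improvement" to "maximises $V_i$ on $B_i(\pi)$", giving (2.4). The passage from the discrete quasi-equilibrium back to the continuum one is then Proposition~\ref{prop:Equivalency}. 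The main obstacle, and the step deserving the most care, is establishing that the separating set has nonempty interior so that Hahn--Banach yields a nonzero functional — this is precisely why the interior-point hypothesis on $Y_+$ and (A$_4$) are indispensable, and it is where a naive finite-dimensional argument would break down in a general Banach lattice.
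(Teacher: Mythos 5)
Your outline (separate a suitably aggregated ``preferred net trade'' set from the negative cone, read off the budget and maximization properties, and finish with Proposition~\ref{prop:Equivalency}) matches the paper's in broad shape, but there is a genuine gap at the central step: you derive convexity of your set $Z-(Y_+)^\Omega$ from concavity of the $V_i$, i.e.\ from (A$_3$), and (A$_3$) is \emph{not} among the hypotheses of this theorem, which assumes only (A$_1$), (A$_2$), (A$_4$) and an interior point of $Y_+$. The same unstated assumption is already needed at your very first step: converting ``$f$ is privately blocked by a coalition $S\subseteq I$ via $g$'' into an Aubin-type blocking of $(x_1,\dots,x_n)$ with weights $\theta_i=n\mu(S\cap I_i)$ and type-symmetric bundles $y_i=\frac{1}{\mu(S\cap I_i)}\int_{S\cap I_i}g(t,\cdot)\,d\mu(t)$ requires Jensen's inequality, hence concavity, to preserve $V_i(y_i)>V_i(x_i)$. (Also, since $\mu$ is atomless, subcoalitions of $I_i$ realise arbitrary real weights in $[0,1]$, not only rational ones; the rational-approximation step belongs to the replica setting, not to the continuum one.) Without (A$_3$) your separating set need not be convex, Hahn--Banach does not apply, and the argument proves only a weaker statement.

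The paper closes exactly this gap by staying inside the continuum economy: with $G(t)=\{g\in L_t: V_t(g)>V_t(f(t,\cdot))\}$ it forms
\[
H={\rm cl}\left(\bigcup\left\{\int_A G(t,\cdot)\,d\mu(t)-\int_A a(t,\cdot)\,d\mu(t):\ \mu(A)>0\right\}\right)
\]
and invokes Uhl's infinite-dimensional Lyapunov convexity theorem (following Hildenbrand's Proposition 5) to show that $H$ is convex \emph{without any convexity of preferences}: the atomless measure space does the convexifying that (A$_3$) does in your argument. It then separates $H$ from ${\rm int}(-Y_+)^\Omega$ (nonempty precisely because $Y_+$ has an interior point) and proceeds much as you do for the budget, maximization and non-triviality conclusions, with (A$_4$) used for non-triviality as you indicate. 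To salvage your discrete/Aubin route you would have to either add (A$_3$) to the hypotheses --- which is essentially the setting of Theorem~\ref{thm:fuzzycoreequilibria1}, whose proof in this paper in turn rests on the present theorem --- or import the Lyapunov-type convexification yourself.
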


  \begin{proof}
  Consider a correspondence $G: I\rightrightarrows (Y_+)^\Omega$
  defined by $G(t) = \{ g(t, \cdot)\in L_t: V_t(g(t, \cdot))>
  V_t(f(t,\cdot))\}$. By (A$_2$), $G(t) \ne \emptyset$ for all
  $t\in I$. Applying the infinite dimensional extension of
  Lyapunov convexity theorem \cite{uhl:69} to the proof
  of Proposition 5 in \cite[p. 62]{Hildenbrand:74}, one can show
  that
  \[
  H= \|\cdot\|^\Omega\mbox{-}{\rm cl}\left(\bigcup \left
  \{\int_A G(t,\cdot)d\mu(t)-\int_A a(t, \cdot)d\mu(t): A\in
  \mathcal{A},\ \mu(A)> 0\right\}\right)
  \]
  is a convex subset of $Y^\Omega$, where
  $\mathcal{A}$ denotes the set of Lebesgue measurable subsets
  of $I$. Since $H\cap {\rm int} (-Y_+)^\Omega= \emptyset$,
  by the separation theorem, there is a non-zero positive element
  $\pi\in (Y^\ast)^\Omega$ such that for any coalition $A$,
  \[
  \sum_{\omega\in \Omega}\langle \pi(\omega), y(\omega)\rangle
  \ge \sum_{\omega\in \Omega}\left\langle \pi(\omega), \int_A
  a(t,\omega) d\mu(t) \right\rangle
  \]
  for all $y\in \int_A G(t, \cdot)d\mu(t)$. Let $N_1=\{i\in N:
  \sum_{\omega\in \Omega}\langle \pi(\omega), a_i(\omega)
  \rangle\neq 0\}$. Suppose $V_i(y_i)>V_i(x_i)$ for some $i\in
  N_1$ and $y_i\in L_i$. If $y_i \in B_i(\pi)$, by (A$_1$), one
  can construct some $z_i \in B_i(\pi)$ such that $V_i(z_i)>
  V_i(x_i)$ and
  \[
  \sum_{
  \omega \in \Omega}\left\langle \pi(\omega), \int_{I_i}z_i(\omega)
  d\mu(t) \right\rangle< \sum_{\omega\in \Omega}\left\langle
  \pi(\omega), \int_{I_i}a_i(\omega)d\mu(t)\right\rangle,
  \]
  which is a contradiction. Thus, $\sum_{\omega\in
  \Omega}\langle\pi(\omega), y_i(\omega)\rangle> \sum_{\omega
  \in \Omega}\langle\pi(\omega), a_i(\omega)\rangle$.
  By (A$_2$), $\sum_{\omega
  \in \Omega}\langle \pi(\omega), x_i(\omega)\rangle\geq \sum_{
  \omega\in \Omega}\langle \pi(\omega), a_i(\omega)\rangle$ for
  all $i\in N_1$. Using the feasibility of $f$, one can show
  that $x_i \in B_i(\pi)$ for all $i\in N_1$. Thus, $(x, \pi)$
  is a non-trivial Walrasian expectations quasi-equilibrium in
  $\mathcal E$. By Proposition \ref{prop:Equivalency}, $(f,\pi)$
  is a non-trivial Walrasian expectations quasi-equilibrium in
  $\mathcal{E}_c$. 
  \end{proof}

  \begin{corollary} \label{coro:quasiWalras}
  Let $f$ be a feasible allocation such that $f(t, \cdot)= x_i$
  for all $t\in I_i$ and $i\in N$.
  Under {\rm (A$_1$), (A$_2$)}, {\rm (A$_4$)} and {\rm (A$_7$)},
  $f$ is a Walrasian expectations allocation if and only if $f$
  is in the private core of $\mathcal{E}_c$.
  \end{corollary}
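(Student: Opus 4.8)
The plan is to prove Corollary \ref{coro:quasiWalras} by combining Theorem \ref{thm:core-wal1}, Proposition \ref{prop:Irreducibilty}, Proposition \ref{prop:Equivalency} together with the easy implication that a Walrasian expectations allocation always lies in the private core. The two directions are of rather different character, so I treat them separately.

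For the forward direction, suppose $f$ is a Walrasian expectations allocation of $\mathcal{E}_c$; say $(f,\pi)$ is a Walrasian expectations equilibrium. I claim $f$ is in the private core. If not, some coalition $S$ with $\mu(S)>0$ privately blocks $f$ via a function $g$ with $g(t,\cdot)\in L_t$, $V_t(g(t,\cdot))>V_t(f(t,\cdot))$ for all $t\in S$, and $\int_S g(t,\omega)\,d\mu(t)\leq\int_S a(t,\omega)\,d\mu(t)$ for all $\omega\in\Omega$. Since each $f(t,\cdot)$ maximizes $V_t$ over $B_t(\pi)$ (this is (2.4$'$), which holds because we are in a genuine equilibrium, not merely a quasi-equilibrium), the bundle $g(t,\cdot)$ must lie outside $B_t(\pi)$, i.e.\ $\sum_{\omega\in\Omega}\langle\pi(\omega),g(t,\omega)\rangle>\sum_{\omega\in\Omega}\langle\pi(\omega),a(t,\omega)\rangle$ for all $t\in S$. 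Integrating this strict inequality over $S$ and using positivity of $\pi$ together with the blocking feasibility constraint yields
\[
\sum_{\omega\in\Omega}\left\langle\pi(\omega),\int_S a(t,\omega)\,d\mu(t)\right\rangle
<\sum_{\omega\in\Omega}\left\langle\pi(\omega),\int_S g(t,\omega)\,d\mu(t)\right\rangle
\leq\sum_{\omega\in\Omega}\left\langle\pi(\omega),\int_S a(t,\omega)\,d\mu(t)\right\rangle,
\]
a contradiction. (One must check the measurability needed to integrate $t\mapsto\sum_\omega\langle\pi(\omega),g(t,\omega)\rangle$, which follows from Bochner integrability of $g(\cdot,\omega)$; and one should note $\mu(S)>0$ is what makes the strict inequality survive integration.)

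For the reverse direction, suppose $f$ is in the private core of $\mathcal{E}_c$. Then assumptions (A$_1$), (A$_2$), (A$_4$) are in force and the commodity space need not be assumed to have an interior point in its positive cone for this corollary as stated — wait, actually Theorem \ref{thm:core-wal1} does require that, so I would either inherit that hypothesis implicitly or flag it; assuming it, Theorem \ref{thm:core-wal1} gives a non-zero $\pi:\Omega\to Y^\ast_+$ with $(f,\pi)$ a non-trivial Walrasian expectations quasi-equilibrium of $\mathcal{E}_c$. By Proposition \ref{prop:Equivalency} (applicable since (A$_3$) — here I note that (A$_1$)--(A$_4$) are assumed, but Proposition \ref{prop:Equivalency} needs (A$_3$); if the corollary does not list (A$_3$) this is a gap to address, but presumably concavity is available or the continuum-to-discrete passage can be done directly), the corresponding pair $(x,\pi)$ with $x_i=n\int_{I_i}f(t,\cdot)\,d\mu(t)=x_i$ is a non-trivial Walrasian expectations quasi-equilibrium of $\mathcal{E}$. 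Now invoke Proposition \ref{prop:Irreducibilty}: under (A$_7$), every non-trivial Walrasian expectations quasi-equilibrium of $\mathcal{E}$ is in fact a Walrasian expectations equilibrium. By Remark \ref{rem:discont}, the equilibrium property transfers back to $\mathcal{E}_c$, so $(f,\pi)$ is a Walrasian expectations equilibrium and $f$ is a Walrasian expectations allocation.

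The main obstacle is bookkeeping rather than any deep new argument: the heavy lifting (the Lyapunov convexity/separation argument) is already packaged in Theorem \ref{thm:core-wal1}, and the quasi-to-full upgrade is packaged in Proposition \ref{prop:Irreducibilty}. What requires care is (i) verifying that the blocking inequality in the forward direction is genuinely strict after integration, which hinges on $\mu(S)>0$ and on the equal-treatment structure ensuring $g$ can be taken measurable; and (ii) tracking exactly which assumptions each cited result consumes — in particular confirming that (A$_3$) and the interior-point hypothesis, needed by Theorem \ref{thm:core-wal1} and Proposition \ref{prop:Equivalency}, are indeed available in the setting of the corollary (if the corollary is meant to be read in the scope of Theorem \ref{thm:core-wal1}'s standing hypotheses, this is automatic). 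Once those hypothesis-matching points are settled, the proof is a short chain of references.
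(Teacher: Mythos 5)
Your proposal is correct and follows exactly the route the paper intends for this corollary (which is stated without proof): the easy direction is the standard argument that an equilibrium allocation cannot be privately blocked, and the converse chains Theorem~\ref{thm:core-wal1}, Proposition~\ref{prop:Equivalency}, Proposition~\ref{prop:Irreducibilty} and Remark~\ref{rem:discont}. Your two flagged bookkeeping points are well taken and resolve as you suspect: the interior-point hypothesis is inherited from the scope of Theorem~\ref{thm:core-wal1}, and (A$_3$) is not actually needed here because $f$ is constant on each $I_i$, so the continuum-to-discrete passage is an identity rather than an averaging step.
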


  Next, we extend Theorem \ref{thm:core-wal1} to an asymmetric
  information economy with equal treatment property whose commodity
  space is a Banach lattice containing a quasi-interior point in
  its positive cone. For each $i\in N$ and each $x_i\in L_i$,
  let $P_i(x_i):=\{y_i\in L_i: V_i(y_i)> V_i(x_i)\}$ be the set
  of all $\mathcal{F}_i$-measurable consumption bundles preferred
  to $x_i$ by agent $i$. Then $P_i: L_i \rightrightarrows L_i$
  is called the \emph{preference relation} of agent $i$. Note that
  under (A$_1$) and (A$_3$), $P_i(x_i)$ is convex and relatively
  $\|\cdot\|^\Omega$-open in $L_i$ for all $i\in N$. The following
  definition of ATY-properness is taken from \cite{Podczeck:08}.

  \begin{definition}
  The relation $P_i: L_i \rightrightarrows L_i$ is
  called \emph{ATY-proper} at
  $x_i\in L_i$ if there exists a convex subset $\widetilde{P}_i
  (x_i)$ of $Y^\Omega$ with non-empty $\|\cdot\|^\Omega$-interior
  such that $\widetilde{P}_i(x_i)\cap L_i= P_i(x_i)$ and
  ($\|\cdot\|^\Omega$-int$\widetilde{P}_i(x_i)$)$\cap L_i\neq
  \emptyset$.
  \end{definition}

  \begin{itemize}
  \item[(A$_8$)] \emph{Properness}. If $(x_1,...,x_n)$ is a
  privately Pareto optimal allocation in $\mathcal{E}$, then for
  each $i\in N$, $P_i$ is ATY-proper at $x_i$.
  \end{itemize}

  \begin{lemma}\label{lem:contfunc}
  Let $Y$ be a real vector space endowed with a Hausdorff, locally
  convex topology $\tau$ and let $U, V$ be convex subsets of
  $Y$ such that $U$ is open and $U\cap V\neq \emptyset$. Let $y\in
  V\cap {\rm cl} U$, where ${\rm cl}U$ denotes the closure of $U$.
  Suppose that $\pi$ is a linear functional (not necessarily
  continuous) on $Y$ with $\langle\pi, y\rangle\leq \langle\pi,
  y^\prime \rangle$ for all $y^\prime\in U\cap V$. Then, there exist
  linear functionals $\pi_1$ and $\pi_2$ on $Y$ such that $\pi_1$
  is continuous, $\langle \pi_1, y\rangle\leq \langle\pi_1, u
  \rangle$ for all $u\in U$, $\langle \pi_2, y\rangle\leq \langle
  \pi_2, v\rangle$ for all $v\in V$ and $\pi= \pi_1+ \pi_2$.
  \end{lemma}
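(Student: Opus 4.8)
The plan is to realize this as a variant of the Hahn--Banach extension/decomposition theorem. The hypothesis $\langle\pi,y\rangle\leq\langle\pi,y'\rangle$ for all $y'\in U\cap V$ says that the affine functional $y'\mapsto\langle\pi,y'-y\rangle$ is nonnegative on $U\cap V$; since $U\cap V$ is a nonempty open convex set (open because $U$ is open and $V$ is convex with $U\cap V\neq\emptyset$ — actually $U\cap V$ is open since $U$ is open), and since $y\in\mathrm{cl}(U)$, the point $y$ lies in the closure of $U\cap V$ as well, so $y$ is a ``boundary minimizer'' of $\pi$ over $U\cap V$. The first step is therefore to set $p(u):=\langle\pi,u-y\rangle$ on $U$ and $q(v):=-\langle\pi,v-y\rangle$ on $V$, and observe that $p(u)+q(v)\le\langle\pi,u-v\rangle$ has the wrong sign to be used directly; instead I would work with the \emph{sets} rather than values. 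Put $A=U-y$ (an open convex neighbourhood-translate, not containing $0$ necessarily but with $0\in\mathrm{cl}(A)$) and $B=V-y$; then $0\in\mathrm{cl}(A)\cap B$, and $\langle\pi,a\rangle\ge0$ for all $a\in A\cap B$.

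The key construction is to separate, inside $Y\times\mathbb{R}$ (or just in $Y$), the convex sets $A$ and the half-line/cone determined by $\pi$ on $B$. Concretely, consider $C:=\{a\in A:\langle\pi,a\rangle<0\}$; by hypothesis $C\cap B=\emptyset$, and $C$ is open and convex (intersection of the open convex $A$ with the open convex half-space $\{\langle\pi,\cdot\rangle<0\}$ — note $C$ may be empty, in which case $\langle\pi,\cdot\rangle\ge0$ on all of $A$, so one takes $\pi_1=0$... but actually we want $\pi_1$ continuous with $\langle\pi_1,y\rangle\le\langle\pi_1,u\rangle$; handling the degenerate $C=\emptyset$ case separately is routine). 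Assuming $C\neq\emptyset$: since $C$ is open convex and disjoint from the convex set $B$, by the geometric Hahn--Banach (separation) theorem there is a nonzero \emph{continuous} linear functional $\pi_1$ on $Y$ and a constant $\gamma$ with $\langle\pi_1,c\rangle\le\gamma\le\langle\pi_1,b\rangle$ for all $c\in C$, $b\in B$. Since $0\in\mathrm{cl}(C)$ (as $0\in\mathrm{cl}(A)$ and $\langle\pi,\cdot\rangle$ restricted near $0$ can be made negative — here one uses $0\in\mathrm{cl}(A)$ together with $A$ open to push slightly into $\{\langle\pi,\cdot\rangle<0\}$, provided $\pi\ne0$; if $\pi=0$ the lemma is trivial with $\pi_1=\pi_2=0$) and $0\in B$, we get $\gamma=0$, i.e. $\langle\pi_1,c\rangle\le0\le\langle\pi_1,b\rangle$.

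From here I would show $\pi_1$ can be scaled so that $\pi-\pi_1$ vanishes on the ``relevant'' directions, then set $\pi_2:=\pi-\pi_1$. The point is: $\langle\pi_1,\cdot\rangle\le0$ on $C$ and $\langle\pi,\cdot\rangle<0$ on $C$ suggests $\pi_1$ is a nonnegative multiple of $\pi$ on the linear span issues aside; more carefully, one checks $\ker\pi\cap A\subseteq\{\langle\pi_1,\cdot\rangle\le0\}$ and by symmetry (using points just above the hyperplane) $\langle\pi_1,\cdot\rangle\ge0$ there too, forcing $\pi_1$ to be constant on $\ker\pi\cap A$ hence $\pi_1=\lambda\pi$ on $\mathrm{span}(A)$ for some $\lambda\ge0$; rescaling $\pi_1$ by $1/\lambda$ (if $\lambda>0$) gives $\langle\pi_1,a\rangle=\langle\pi,a\rangle$ for $a\in\mathrm{span}(A)$ relevant to $U$, whence $\langle\pi_1,u-y\rangle=\langle\pi,u-y\rangle\ge0$ on $A\cap B$ and $\ge$ the separating inequality on all of $A$; translating back, $\langle\pi_1,y\rangle\le\langle\pi_1,u\rangle$ for all $u\in U$. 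Then $\pi_2:=\pi-\pi_1$ is linear, and on $B=V-y$ one has $\langle\pi_2,b\rangle=\langle\pi,b\rangle-\langle\pi_1,b\rangle\ge0-0$... one must verify the sign works out, giving $\langle\pi_2,y\rangle\le\langle\pi_2,v\rangle$ for $v\in V$. I expect the main obstacle to be precisely this bookkeeping in the last step: pinning down the scalar $\lambda$, ruling out $\lambda=0$ (which would need the nondegeneracy coming from $(\|\cdot\|^\Omega\text{-int}\,\widetilde P_i(x_i))\cap L_i\ne\emptyset$ in the intended application, i.e. $U\cap V$ genuinely nonempty and open), and then chasing the two inequalities back through the translations by $y$ so that both $\pi_1$ and $\pi_2$ come out with the stated minimization property simultaneously while $\pi_1+\pi_2=\pi$ exactly rather than up to a positive scalar.
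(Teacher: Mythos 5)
The paper does not prove this lemma in-house: it defers to Lemma~2 of Podczeck (1996). Measured against that argument, your proposal has a genuine gap at its central step. You set $A=U-y$, $B=V-y$, define $C=\{a\in A:\langle\pi,a\rangle<0\}$, and claim $C$ is open as the intersection of the open set $A$ with the ``open convex half-space'' $\{\langle\pi,\cdot\rangle<0\}$. But $\pi$ is \emph{not} assumed continuous --- that is the whole point of the lemma --- and for a discontinuous linear functional the set $\{\langle\pi,\cdot\rangle<0\}$ has empty interior (it and $\ker\pi$ are dense). Hence $C$ has empty interior, and the geometric Hahn--Banach theorem gives no \emph{continuous} functional separating the disjoint convex sets $C$ and $B$: without one of them having an interior point, the separation you invoke is unavailable, and with it goes the continuity of $\pi_1$. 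The second half of the argument is also unsound even granting the separation: there is no reason the separating functional should be a scalar multiple of $\pi$ on $\mathrm{span}(A)$ (your route via constancy on $\ker\pi\cap A$ does not go through), and you yourself leave open the final bookkeeping that must deliver \emph{simultaneously} $\pi_1$ minimized at $y$ over $U$, $\pi_2=\pi-\pi_1$ minimized at $y$ over $V$, and $\pi=\pi_1+\pi_2$ exactly. Two smaller slips: $U\cap V$ is not open in $Y$ (only relatively open in $V$), and in your degenerate case $C=\emptyset$ the correct choice is $\pi_1=\pi$, $\pi_2=0$ --- a linear functional bounded below on the nonempty open set $A$ is automatically continuous --- not $\pi_1=0$.

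The missing idea, which is exactly Podczeck's, is to lift to $Y\times\mathbb{R}$ so that the discontinuity of $\pi$ is absorbed into a convex set rather than into a ``half-space''. Translate $y$ to $0$ and separate the \emph{open} convex set $G=\{(u,t):u\in U-y,\ t>0\}$ from the convex set $K=\{(v,t):v\in V-y,\ t\le-\langle\pi,v\rangle\}$; these are disjoint precisely because $\langle\pi,\cdot\rangle\ge0$ on $(U\cap V)-y$, and $(0,0)\in K$. Separation yields a continuous pair $(\phi,c)$ with constant $\alpha$; letting $t\to-\infty$ in $K$ gives $c\ge0$, the points $(0,0)\in K$ and $0\in\mathrm{cl}(U-y)$ force $\alpha=0$, and $c=0$ is impossible because it would make the continuous $\phi$ nonnegative on the open set $U-y$ yet vanishing at an interior point of it (any point of $(U\cap V)-y$), whence $\phi=0$. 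Setting $\pi_1=\phi/c$ and $\pi_2=\pi-\pi_1$, the inequality on $G$ gives $\langle\pi_1,y\rangle\le\langle\pi_1,u\rangle$ on $U$, and the inequality on $K$ at $t=-\langle\pi,v\rangle$ gives $\langle\pi_2,y\rangle\le\langle\pi_2,v\rangle$ on $V$, with $\pi=\pi_1+\pi_2$ by construction. Your instinct to reduce to a separation argument is right, but without this lift the proof does not close.
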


  \begin{lemma}\label{lem:dense}
  Let $Y$ be a Riesz space endowed with a Hausdorff, locally
  convex topology $\tau$. If $L(z)$ is $\tau$-dense in $Y$,
  then $L(z)_+$ is $\tau$-dense in $Y_+$.
  \end{lemma}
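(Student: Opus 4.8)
The plan is to argue that density of the principal ideal $L(z)$ in $Y$ forces density of its positive part $L(z)_+ = L(z) \cap Y_+$ in $Y_+$, using only the lattice operations together with their continuity properties relative to $\tau$. First I would fix an arbitrary $x \in Y_+$ and a $\tau$-neighbourhood basis; the goal is to approximate $x$ by elements of $L(z)_+$. Since $L(z)$ is $\tau$-dense, pick a net $(y_\alpha)$ in $L(z)$ with $y_\alpha \to x$ in $\tau$. The natural candidate approximants are the positive parts $y_\alpha^+ = y_\alpha \vee 0$, which lie in $L(z)_+$ because $L(z)$, being an ideal (indeed a sublattice), is closed under the lattice operations, and $0 \le y_\alpha^+ \le |y_\alpha| \in L(z)$. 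So the crux reduces to showing $y_\alpha^+ \to x$ in $\tau$, i.e.\ that taking positive parts is compatible with $\tau$-convergence here.

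The main obstacle is precisely this last point: in a general Hausdorff locally convex Riesz space the lattice operations need not be $\tau$-continuous, so $y_\alpha \to x$ does not automatically give $y_\alpha^+ \to x^+ = x$. I would handle this by exploiting that the \emph{limit} $x$ is itself positive, which makes the estimate one-sided and robust. Concretely, write $y_\alpha - x = (y_\alpha - x)^+ - (y_\alpha - x)^-$ and use the Riesz-space inequality
\[
0 \le y_\alpha^+ - x \le y_\alpha^+ - x^+ \le (y_\alpha - x)^+ \le |y_\alpha - x|
\]
(the middle inequality is the standard estimate $|a^+ - b^+| \le |a-b|$ applied with $a = y_\alpha$, $b = x$, together with $x^+ = x$ and $y_\alpha^+ \ge x^+$ failing in general — so more care is needed: one only gets $|y_\alpha^+ - x| \le |y_\alpha - x|$ pointwise in the lattice sense). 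The honest route is: from $|y_\alpha^+ - x^+| \le |y_\alpha - x|$ and $x^+ = x$, we get $|y_\alpha^+ - x| \le |y_\alpha - x|$ as an inequality of positive vectors. Then I would invoke the fact that in a locally convex Riesz space one may always choose the defining seminorms to be \emph{lattice seminorms} (monotone on $Y_+$) — this is a standard structural fact, available since the paper grants us all of \cite{Aliprantis-Border:05} — so that $p(y_\alpha^+ - x) \le p(|y_\alpha^+ - x|) \le p(|y_\alpha - x|) = p(y_\alpha - x) \to 0$ for each such seminorm $p$. Hence $y_\alpha^+ \to x$ in $\tau$, and since each $y_\alpha^+ \in L(z)_+$, the vector $x$ lies in the $\tau$-closure of $L(z)_+$.

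I should double-check the one structural input flagged above: whether a Hausdorff locally convex topology on a Riesz space is necessarily generated by lattice seminorms. If the statement intends only \emph{locally convex-solid} (Fréchet lattice / locally solid) topologies this is immediate; if it genuinely means an arbitrary compatible locally convex topology, then instead I would argue directly from convexity: given a $\tau$-neighbourhood $U$ of $0$, shrink to a convex symmetric $V$ with $V + V + V \subseteq U$, pick $y \in L(z)$ with $y - x \in V$, and control $y^+ - x$ by writing $y^+ - x = \tfrac{1}{2}\big((y - x) + |y-x|\big) - \tfrac{1}{2}\big((y-x) - |y-x| \big)\wedge 0$-type decompositions, bounding $|y - x|$ via density of $L(z)$ a second time. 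In any case the argument is short once the lattice estimate $|y^+ - x^+| \le |y - x|$ is in hand; that inequality, valid in every Riesz space, is the engine of the proof, and the only delicate matter is translating it through $\tau$, which is where I would concentrate the write-up. Finally, since $x \in Y_+$ was arbitrary, $L(z)_+$ is $\tau$-dense in $Y_+$, completing the proof. $\qed$
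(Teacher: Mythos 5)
The paper does not actually prove this lemma: it refers the reader to Lemma~3 of \cite{Podczeck:96}, so there is no internal argument to compare against and I will assess your proposal on its own terms. Its core is correct and is the standard argument: an ideal is a sublattice, so $y_\alpha^+\in L(z)_+$, and the Riesz--space inequality $|y_\alpha^+-x^+|\le |y_\alpha-x|$ together with monotonicity of the generating seminorms on positive elements gives $y_\alpha^+\to x^+=x$. This settles every instance in which the lemma is invoked in the paper, because there $\tau$ is always the norm topology of a Banach lattice, which is generated by a single lattice norm.

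The genuine gap is the ``structural fact'' you lean on. It is \emph{not} true that an arbitrary Hausdorff locally convex topology on a Riesz space can be generated by lattice seminorms; that property is exactly the definition of a locally convex-solid topology and is strictly stronger (for instance, the weak topology of an infinite-dimensional Banach lattice is Hausdorff and locally convex but not locally solid). Your fallback ``argue directly from convexity'' cannot be completed either: every variant of the construction (using $y^+$, or $x\wedge y^+$, or $x\vee y$) leaves an error term that is only \emph{order}-dominated by $|y-x|$, and without a base of solid neighbourhoods, order-domination by a topologically small element yields no topological smallness. Indeed, some solidity hypothesis is indispensable for the statement itself: take $Y=\ell_\infty$, $z=(1/n)_{n}$, and $\tau=\sigma(\ell_\infty,F)$ where $F=\{a+\phi(a)\Lambda: a\in\ell_1\}$ for a Banach limit $\Lambda$ and a discontinuous linear functional $\phi$ on $\ell_1$ with $\phi(a_0)=-2$ for some $a_0\ge 0$ of norm one. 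Then $\tau$ is Hausdorff and locally convex, $L(z)$ is $\tau$-dense (only $a=0$ gives an element of $F$ vanishing on $L(z)$), yet $f=a_0-2\Lambda\in F$ is nonnegative on $L(z)_+$ (where $\Lambda$ vanishes) while $f(\mathbf 1)=-1<0$, so $\mathbf 1\in Y_+$ is not in the $\tau$-closure of $L(z)_+$. The correct repair is therefore not a cleverer convexity argument but an added hypothesis: assume $\tau$ is locally convex-solid (as in the cited source and as holds wherever the lemma is applied here), after which your first argument is already a complete proof.
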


  \begin{lemma}\label{lem:rieszdec}
  Let $Y$ be a Riesz space and let $Z$ be an ideal in $Y$. Let
  $y_1,..., y_m$ be elements of $Y$ and $z_1, ..., z_m$ be
  elements of $Z$ such that $\sum_{i= 1}^m z_i\leq \sum_{i= 1}^m
  y_i$. Suppose that there exists an element $z\in Z$ such that
  $z\leq y_i$ for each $i= 1, ..., m$. Then, there are elements
  $\hat{z}_1, ..., \hat{z}_m$ of $Z$ such that $\sum_{i= 1}^m
  \hat{z}_i= \sum_{i= 1}^m z_i$ and $\hat{z}_i\leq y_i$ for
  each $i= 1, ..., m$.
  \end{lemma}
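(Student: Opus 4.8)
The plan is to reduce the statement to the classical (finite) Riesz decomposition property. The idea is to first absorb, for each index $i$, the part of $z_i$ that exceeds $y_i$, and then to redistribute these finitely many ``overshoots'' into the slacks $y_j-z_j$ available at the other indices; the auxiliary element $z$ will enter only to keep the intermediate bundles inside the ideal $Z$.

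First I would set $u_i:=z_i\wedge y_i$ and $v_i:=z_i-u_i=(z_i-y_i)^+\ge 0$ for $i=1,\dots,m$. Since $z\le y_i$ we have $z_i-y_i\le z_i-z$, hence $v_i=(z_i-y_i)^+\le(z_i-z)^+$; now $z_i-z\in Z$ and $Z$, being an ideal, is a sublattice, so $(z_i-z)^+\in Z$, whence solidity of $Z$ forces $v_i\in Z$ and therefore $u_i=z_i-v_i\in Z$. By construction $u_i\le y_i$, and from $\sum_i u_i+\sum_i v_i=\sum_i z_i\le\sum_i y_i$ one gets
\[
\sum_{i=1}^{m}v_i\ \le\ \sum_{j=1}^{m}(y_j-u_j),
\]
where each $y_j-u_j=(y_j-z_j)^+\ge 0$.

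Next I would invoke the finite Riesz decomposition property in $Y$: since $v_i\ge 0$, $y_j-u_j\ge 0$ and $\sum_i v_i\le\sum_j(y_j-u_j)$, there exist elements $w_{ij}\ge 0$ with $\sum_j w_{ij}=v_i$ for every $i$ and $\sum_i w_{ij}\le y_j-u_j$ for every $j$. (This ``simultaneous'' decomposition follows from the identity $[0,a+b]=[0,a]+[0,b]$ for $a,b\ge 0$ by a short induction: decompose $v_1$ against $(y_j-u_j)_j$, replace each $y_j-u_j$ by its remainder $y_j-u_j-w_{1j}$, and iterate on $v_2,\dots,v_m$; see \cite{Aliprantis-Border:05}.) Now put $\hat z_j:=u_j+\sum_i w_{ij}$. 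Then $\hat z_j\le u_j+(y_j-u_j)=y_j$, and
\[
\sum_{j=1}^{m}\hat z_j=\sum_j u_j+\sum_i\sum_j w_{ij}=\sum_j u_j+\sum_i v_i=\sum_i z_i,
\]
as required.

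It remains to verify $\hat z_j\in Z$. Here $u_j\in Z$, and since $\sum_{j'} w_{ij'}=v_i$ with all summands nonnegative we have $0\le w_{ij}\le v_i$, hence $0\le\sum_i w_{ij}\le\sum_i v_i$; as $\sum_i v_i\in Z$, solidity of $Z$ gives $\sum_i w_{ij}\in Z$, so $\hat z_j=u_j+\sum_i w_{ij}\in Z$. The only substantive step is the simultaneous Riesz decomposition, but with finitely many terms this is a routine consequence of the two-term decomposition property; everything else is bookkeeping, with $z$ used solely to guarantee that the truncations and the redistributed masses remain in the ideal $Z$.
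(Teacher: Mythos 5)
Your argument is correct. The paper itself does not prove this lemma---it defers to Lemma 7 of \cite{Podczeck:08}---so there is no in-text proof to compare against, but your reduction is the standard one and every step checks out: the truncation $u_i=z_i\wedge y_i$ with overshoot $v_i=(z_i-y_i)^+$, the domination $v_i\le (z_i-z)^+\in Z$ (which is exactly where the hypothesis $z\le y_i$ is used, and which correctly exploits that an ideal is a solid Riesz subspace), the inequality $\sum_i v_i\le\sum_j(y_j-u_j)$ with $y_j-u_j=(y_j-z_j)^+\ge 0$, and the final redistribution by the decomposition property. One small remark: the doubly indexed family $w_{ij}$ is more than you need. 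Since each $v_i\in Z$, the single element $s=\sum_i v_i$ lies in $Z$ and satisfies $0\le s\le\sum_j(y_j-u_j)$; one application of $[0,a+b]=[0,a]+[0,b]$ gives $s=\sum_j w_j$ with $0\le w_j\le y_j-u_j$, and then $0\le w_j\le s\in Z$ yields $w_j\in Z$ by solidity, so $\hat z_j=u_j+w_j$ works. This shortens the bookkeeping and removes the inductive ``simultaneous decomposition'' step, but your version of that step is also valid as written.
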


  For proofs of Lemmas \ref{lem:contfunc}, \ref{lem:dense} and
  \ref{lem:rieszdec}, see Lemmas 2 and 3 in \cite{Podczeck:96}
  and Lemma 7 in \cite{Podczeck:08}, respectively.
  In the proof of the next theorem, the argument to get
  continuity of the equilibrium price is similar to that in
  Theorem 2 of \cite{Podczeck:08}. Our proof needs
  some additional construction because of the free disposal
  assumption.

  \begin{theorem}\label{thm:core-wal2}
  Assume that $\mathcal E$ satisfies \emph{(A$_1$)-(A$_4$)},
  \emph{(A$_6$)} and \emph{(A$_8$)}. Let $f$ be a feasible
  allocation in $\mathcal{E}_c$ such that $f(t, \cdot)= x_i$ for
  all $t\in I_i$ and $i\in N$. If $f$ is in the private core of
  $\mathcal{E}_c$, then $(f, \pi)$ is a non-trivial Walrasian
  expectations quasi-equilibrium of $\mathcal{E}_c$ for some
  non-zero $\pi: \Omega \to Y^\ast_+$.
  \end{theorem}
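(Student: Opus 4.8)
The plan is to follow the architecture of the proof of Theorem~\ref{thm:core-wal1}, with two substitutions: the interior point of $Y_+$ is replaced by the ATY-properness furnished by \emph{(A$_8$)}, and the equilibrium price is rendered norm-continuous by the splitting device of Lemma~\ref{lem:contfunc}. First I would note that a private core allocation is not privately blocked by the grand coalition $I$: if $y=(y_i)_{i\in N}$ were a feasible allocation of $\mathcal E$ with $V_i(y_i)>V_i(x_i)$ for every $i$, then $g(t,\cdot):=y_i$ on $I_i$ would privately block $f$ in $\mathcal E_c$. Hence $x=(x_1,\dots,x_n)$ is privately Pareto optimal in $\mathcal E$, so by \emph{(A$_8$)} each $P_i$ is ATY-proper at $x_i$; fix, for each $i$, a convex set $\widetilde P_i(x_i)\subseteq Y^\Omega$ with non-empty $\|\cdot\|^\Omega$-interior such that $\widetilde P_i(x_i)\cap L_i=P_i(x_i)$ and the $\|\cdot\|^\Omega$-interior of $\widetilde P_i(x_i)$ meets $L_i$. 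By \emph{(A$_1$)} and \emph{(A$_2$)}, $x_i$ lies in the $\|\cdot\|^\Omega$-closure of $\widetilde P_i(x_i)$.

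Next I would produce a supporting functional. Since \emph{(A$_3$)} makes each $P_i(x_i)$, hence each $\widetilde P_i(x_i)$, convex, one works with the Minkowski sum, forming the convex set $W:=\sum_{i\in N}\widetilde P_i(x_i)+(Y_+)^\Omega\subseteq Y^\Omega$; it has non-empty $\|\cdot\|^\Omega$-interior (because each $\widetilde P_i(x_i)$ does) and is upward closed. The crucial point is that $\sum_{i\in N}a_i$ --- equivalently, by feasibility, $\sum_{i\in N}x_i$ --- does not lie in the $\|\cdot\|^\Omega$-interior of $W$: were it to, then, using that the $\|\cdot\|^\Omega$-interior of each $\widetilde P_i(x_i)$ meets $L_i$ and pulling the resulting candidate bundles back into the measurable consumption sets $L_i$ by Lemma~\ref{lem:rieszdec} (their common ideal $Z=L(\hat a)$ being available by \emph{(A$_6$)}, with $0\in Z$ providing the lower bound required by the lemma), one could exhibit a private block of $f$ by the grand coalition, contradicting Pareto optimality. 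Separating $\sum_i x_i$ from $W$ produces a non-zero linear functional $\Pi$ on $Y^\Omega$; writing $\langle\Pi,z\rangle=\sum_{\omega\in\Omega}\langle\pi_\omega,z(\omega)\rangle$, upward closedness of $W$ makes $\Pi$ positive, and distributing the support over the Minkowski sum (setting the remaining coordinates equal to the corresponding $x_j$) gives $\langle\Pi,p\rangle\ge\langle\Pi,x_i\rangle$ for all $p\in\widetilde P_i(x_i)$, hence for all $p\in P_i(x_i)=\widetilde P_i(x_i)\cap L_i$ and every $i\in N$.

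Now I would turn $\Pi$ into a genuine price. Invoking Lemma~\ref{lem:contfunc} --- with $U$ the $\|\cdot\|^\Omega$-interior of $W$ and $V$ assembled from the $L_i$'s, so that the hypothesis $U\cap V\ne\emptyset$ is exactly that the $\|\cdot\|^\Omega$-interior of $\widetilde P_i(x_i)$ meets $L_i$ --- and arguing as in the proof of Theorem~2 of \cite{Podczeck:08}, I obtain from $\Pi$ a norm-continuous functional $\Pi_1=(\pi^1_\omega)$ that still supports each $\widetilde P_i(x_i)$ at $x_i$ within $L_i$; thus $\pi(\omega):=\pi^1_\omega$ defines a price system $\pi:\Omega\to Y^\ast_+$, positive because $W$ is upward closed and non-zero because, by \emph{(A$_4$)} and \emph{(A$_6$)}, $L(\hat a)$ is norm-dense in $Y$, so $L(\hat a)_+$ is norm-dense in $Y_+$ by Lemma~\ref{lem:dense} and a non-zero positive norm-continuous functional is strictly positive there. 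The free-disposal slack $\sum_{i}a_i(\omega)-\sum_i x_i(\omega)\ge0$ lies, for each $\omega$, in $Z=L(\hat a)$ by \emph{(A$_6$)}, and Lemma~\ref{lem:rieszdec} is applied within $Z$ to redistribute it so that no disposal is charged at $\pi$, i.e.\ so that (2.3) holds. The remaining argument is that of Theorem~\ref{thm:core-wal1}: for $i\in N_1:=\{i\in N:\sum_{\omega}\langle\pi(\omega),a_i(\omega)\rangle\ne0\}$, \emph{(A$_1$)} rules out cheaper preferred bundles and \emph{(A$_2$)} together with feasibility forces $x_i\in B_i(\pi)$ and that $x_i$ maximizes $V_i$ on $B_i(\pi)$; since $\pi$ is norm-continuous and $\sum_i a_i(\omega)\gg0$ by \emph{(A$_4$)}, $N_1\ne\emptyset$, so $(x,\pi)$ is a non-trivial Walrasian expectations quasi-equilibrium of $\mathcal E$, and Proposition~\ref{prop:Equivalency} delivers the conclusion for $(f,\pi)$ in $\mathcal E_c$.

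The main obstacle will be the crucial point in the second step: ATY-properness must enlarge the preferred sets enough that $W$ acquires a non-empty $\|\cdot\|^\Omega$-interior, yet one must still show that $\sum_i x_i$ escapes that interior, and since $\widetilde P_i(x_i)$ strictly enlarges $P_i(x_i)$ away from $L_i$ this forces a careful reduction of blocking candidates back to the measurable consumption sets $L_i$ --- precisely where the hypothesis that the $\|\cdot\|^\Omega$-interior of $\widetilde P_i(x_i)$ meets $L_i$, and Lemma~\ref{lem:rieszdec} within the common ideal $L(\hat a)$ supplied by \emph{(A$_6$)}, do the work. Carrying the free-disposal term coherently through this reduction, through the separation, and through the splitting of $\Pi$ is the ``additional construction'' alluded to before the statement, and keeping everything inside the fixed ideal $L(\hat a)$ is what makes \emph{(A$_6$)} indispensable here.
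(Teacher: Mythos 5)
Your proposal takes a genuinely different route from the paper --- a one-shot separation of the aggregate bundle from the Minkowski sum $\sum_{i}\widetilde P_i(x_i)+(Y_+)^\Omega$ --- whereas the paper never performs such a separation. The paper first passes to the principal ideal $Z=L(\hat a)$ furnished by (A$_6$), equips it with the order-unit norm $\|\cdot\|_{\hat a}$ (so that $Z_+$ has non-empty interior), applies the already-proved Theorem \ref{thm:core-wal1} inside that auxiliary economy to obtain a $\|\cdot\|_{\hat a}$-continuous quasi-equilibrium price $\hat\pi$, and only then uses (A$_8$), Lemma \ref{lem:contfunc}, Lemma \ref{lem:rieszdec} and the Riesz--Kantorovich formulas to show that $\hat\pi$ is in fact $\|\cdot\|$-continuous, finishing with Hahn--Banach and density. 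Properness enters on the \emph{price} side, to upgrade continuity, not to justify a separation on the commodity side. Against that benchmark your plan has two genuine gaps.

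First, the step you yourself flag as ``the crucial point'' does not go through. From Pareto optimality and (A$_8$) alone you cannot conclude that $\sum_i x_i$ avoids the $\|\cdot\|^\Omega$-interior of $W=\sum_i\widetilde P_i(x_i)+(Y_+)^\Omega$. If $\sum_i a_i=\sum_i p_i+w$ with $p_i\in\widetilde P_i(x_i)$ and $w\ge 0$, the $p_i$ need not lie in $L_i$, and the identity $\widetilde P_i(x_i)\cap L_i=P_i(x_i)$ only helps if a redistributed bundle lands back in $\widetilde P_i(x_i)$. Lemma \ref{lem:rieszdec} redistributes mass subject to order bounds and an aggregate identity; it preserves neither $\mathcal F_i$-measurability in the required pattern nor membership in the (otherwise unstructured) sets $\widetilde P_i(x_i)$, so it cannot convert such a decomposition into a private block. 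This is precisely the known failure mode of naive properness-based separation: without a uniform properness cone or some common structure tying the $\widetilde P_i(x_i)$ together, the Minkowski sum can swallow the aggregate endowment in its interior without any feasible improvement existing. The paper's detour through $(Z,\|\cdot\|_{\hat a})$ exists to avoid exactly this.

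Second, your separation only invokes non-blocking by the grand coalition, i.e.\ Pareto optimality, and it supports each $P_i(x_i)$ at $x_i$, not at $a_i$. That yields at best $\sum_i\langle\Pi,x_i\rangle\le\sum_i\langle\Pi,a_i\rangle$ in the aggregate; it cannot deliver the individual budget constraints $x_i\in B_i(\pi)$ required by (2.1). The per-agent inequality $\sum_\omega\langle\pi(\omega),y_i(\omega)\rangle\ge\sum_\omega\langle\pi(\omega),a_i(\omega)\rangle$ for preferred $y_i$ is obtained in Theorem \ref{thm:core-wal1} by separating over \emph{all} coalitions $A$ (in particular subsets of a single $I_i$), which is where the full private core hypothesis is consumed; your argument discards it. As written, your construction would prove a second-welfare-theorem-type supporting-price statement, not the quasi-equilibrium asserted. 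Both gaps would need to be closed --- most plausibly by reorganizing the proof along the paper's two-stage lines --- before the remaining steps (the Lemma \ref{lem:contfunc} splitting, positivity, and the appeal to Proposition \ref{prop:Equivalency}), which are sound in outline, can be brought to bear.
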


  \begin{proof}
  Let $f$ be in the private core of $\mathcal{E}_c$. Let $Z=
  L(\hat{a})$, where $\hat{a}$ is selected according to (A$_6$).
  Then, $(Z, \|\cdot\|_{\hat{a}})$ is an $AM$-space with $\hat{a}$
  as an order unit. Note that  $\hat{a} \in \|\cdot\|_{\hat{a}}$-int$Z_+$,
  $Z_+$ is $\|\cdot\|_{\hat{a}}$-closed in $Z$, and the
  $\|\cdot\|_{\hat{a}}$-closed unit ball of $Z$ coincides with
  the order interval $[-\hat{a}, \hat{a}]$.  Define a new
  economy $\hat{\mathcal{E}}$ which is identical with $\mathcal{E}$
  except for the commodity space being $Z$ equipped with the
  $\|\cdot\|_{\hat{a}}$-topology, each agent's consumption set
  being $Z_+$ in each state of nature $\omega \in \Omega$, and
  agent $i$'s ex ante expected utility being $V_i|_{Z^\Omega}$.
  If $(y_1,...,y_n)$ is a feasible allocation of $\mathcal{E}$,
  then $y_i(\omega)\in Z_+$ for each $i\in N$ and $\omega\in
  \Omega$. Thus, $x_i(\omega) \in Z_+$ for each $i\in N$ and
  $\omega\in \Omega$. Since $\sum_{i\in N}a_i(\omega)$ is an
  order unit of $Z$, $\sum_{i\in N}a_i(\omega)\in \|\cdot
  \|_{\hat{a}}$-int$Z_+$ for each $\omega\in \Omega$. Since
  $(Z, \|\cdot\|_{\hat{a}})$ is a Banach lattice, the
  $\|\cdot\|$-topology is weaker than the
  $\|\cdot\|_{\hat{a}}$-topology on $Z$. It follows that
  $U_i(\omega, \cdot)|_Z$ is $\|\cdot\|_{\hat{a}}$-continuous.
  Thus, we have verified that $\hat{\mathcal{E}}$ satisfies
  (A$_1$), (A$_2$) and (A$_4$), and $f$ is in the private core of
  $\hat{\mathcal{E}}_c$. By Theorem \ref{thm:core-wal1}, there
  is a non-zero positive element $\hat{\pi}\in ((Z, \|\cdot
  \|_{\hat{a}})^\ast)^\Omega$ such that $(f, \hat{\pi})$ is a
  non-trivial Walrasian expectations quasi-equilibrium in
  $\hat{\mathcal{E}}_c$. We need to show that there is a non-zero
  positive $\pi\in ((Z,\|\cdot\|)^\ast)^\Omega$ such that $(f,
  \pi)$ is a non-trivial Walrasian expectations quasi-equilibrium
  in $\mathcal{E}_c|_Z$, where $\mathcal{E}_c|_Z$ is identical with
  $\hat{\mathcal{E}}_c$ except for the commodity space being $Z$
  with the norm $\|\cdot\|$.

  Since $(f,\hat{\pi})$ is a non-trivial Walrasian expectations
  quasi-equilibrium in $\hat{\mathcal{E}}_c$, by Proposition
  \ref{prop:Equivalency}, $(x_1, ..., x_n, \hat{\pi})$ is a
  non-trivial Walrasian expectations quasi-equilibrium in
  $\hat{\mathcal{E}}$. Thus, $(x_1,...,x_n)$ is a privately
  Pareto optimal allocation in $\hat{\mathcal{E}}$, and also in
  $\mathcal{E}$. Let $i\in N$. By (A$_8$), there is a convex and
  $\|\cdot\|^\Omega$-open subset $W_i$ of $Y^\Omega$ such that
  $\emptyset\neq W_i\cap L_i \subseteq P_i(x_i)$ and
  $\|\cdot\|^\Omega$-${\rm cl} P_i(x_i)\subseteq
  \|\cdot\|^\Omega$-${\rm cl} W_i$. Since $\sum_{i\in N} a_i(\omega)$
  is a quasi-interior point of $Y_+$ for each $\omega\in \Omega$,
  $Z$ is $\|\cdot\|$-dense in $Y$. By Lemma \ref{lem:dense},
  $Z_+$ is $\|\cdot\|$-dense in $Y_+$. By definition,
  $L_i\cap Z_+^\Omega$ is
  $\|\cdot\|^\Omega$-dense in $L_i$. Thus $W_i\cap L_i\cap Z_+^
  \Omega\neq \emptyset$. Let $Q_i= W_i\cap Z^\Omega$ and $\hat{L}_i
  = L_i\cap Z^\Omega$. Then, $Q_i$ is convex and relatively
  $\|\cdot\|^\Omega$-open in $Z^\Omega$. Further, $\emptyset\neq
  Q_i\cap \hat{L}_i\subseteq \hat{P}_i(x_i)$ and
  $\|\cdot\|_Z^\Omega$-${\rm cl}\hat{P}_i(x_i)\subseteq \|\cdot
  \|_Z^\Omega$-${\rm cl}Q_i$, where $\hat{P}_i(x_i)= P_i(x_i)\cap
  Z^\Omega$. By (A$_2$), $x_i\in \|\cdot\|_Z^\Omega$-${\rm cl}
  \hat{P}_i(x_i)$, and so $x_i\in \|\cdot\|_Z^\Omega$-${\rm cl}Q_i$.
  For any $y_i\in Q_i\cap \hat{L}_i$,
  since $V_i(y_i)> V_i(x_i)$ and $(x_1, ..., x_n, \hat{\pi})$ is
  non-trivial Walrasian expectations quasi-equilibrium,
  $\sum_{\omega\in \Omega}\langle\hat{\pi}(\omega),
  y_i(\omega)\rangle\geq \sum_{\omega \in \Omega}\langle
  \hat{\pi}(\omega), x_i(\omega)\rangle$. Since $\Omega$ is finite,
  $\hat{\pi}\in ((Z,\|\cdot\|_{\hat{a}})^\Omega)^\ast$. By Lemma
  \ref{lem:contfunc}, there exist a $\pi_1^i\in ((Z,
  \|\cdot\|)^\Omega)^\ast$ and a linear functional $\pi_2^i$ on
  $(Z, \|\cdot\|)^\Omega$ such that $\sum_{\omega\in \Omega}
  \langle\pi_1^i(\omega), x_i(\omega)\rangle \leq \sum_{\omega
  \in \Omega}\langle\pi_1^i(\omega), y_i(\omega)\rangle$ for all
  $y_i\in Q_i$, $\sum_{\omega\in \Omega}\langle \pi_2^i(\omega),
  x_i(\omega)\rangle\leq \sum_{\omega\in \Omega} \langle\pi_2^i
  (\omega), y_i(\omega)\rangle$ for all $y_i\in \hat{L}_i$, and
  $\hat{\pi}= \pi_1^i+ \pi_2^i$. Since $\hat{L}_i$ is a cone,
  $\sum_{\omega\in \Omega}\langle\pi_2^i (\omega), x_i(\omega)
  \rangle= 0$. It follows that $\sum_{\omega
  \in \Omega} \langle\pi_2^i(\omega), y_i(\omega) \rangle\geq 0$
  for all $y_i\in \hat{L}_i$. Hence, we have
  \begin{enumerate}
  \item[(3.1)] $\sum_{\omega\in \Omega}
  \langle\hat{\pi}(\omega), x_i(\omega)\rangle= \sum_{\omega\in
  \Omega}\langle\pi_1^i(\omega), x_i(\omega) \rangle$,
  \item[(3.2)] $\sum_{\omega\in
  \Omega}\langle\hat{\pi}(\omega), y_i(\omega)\rangle \geq
  \sum_{\omega \in \Omega}\langle\pi_1^i(\omega), y_i(\omega)
  \rangle$ for all $y_i\in \hat{L}_i$.
  \end{enumerate}
  Since $(Z, \|\cdot\|_{\hat{a}})$ is a Banach lattice, $(Z,
  \|\cdot\|_{\hat{a}})^\ast$ agrees with the order dual of $Z$.
  In what follows,  let $(Z, \|\cdot\|_{\hat{a}}) ^\ast$ be
  endowed with the dual ordering relative to the ordering of
  $Z$. Since each $y_i\in \hat{L}_i$ can be written as $y_i=
  \sum_{S\in \mathcal{F}_i}y_i^S{\bf 1}_S$, where $y_i^S\in
  Z_+$ and $S\in \mathcal{F}_i$, from (3.1) and (3.2), it can be
  verified that the following hold for all $S\in \mathcal{F}_i$:
  \begin{enumerate}
  \item[(3.3)] $\sum_{\omega\in S}\pi_1^i(\omega)\leq \sum_{\omega
  \in S}\hat{\pi} (\omega)$,
  \item[(3.4)]
  $\sum_{\omega\in S}\langle\hat{\pi}(\omega), x_i(\omega)\rangle
  =\sum_{\omega\in S}\langle\pi_1^i(\omega), x_i(\omega)\rangle$.
  \end{enumerate}

  Since $(Z,\|\cdot\|)$ is a locally solid Riesz space, $(Z,
  \|\cdot\|)^\ast$ is an ideal in $(Z, \|\cdot\|_{\hat{a}})^\ast$.
  Pick an arbitrary element $S\in \mathcal{F}_i$. Since $\hat{\pi}
  (\omega)\geq 0$ for all $\omega\in S$, by (3.3) and
  Lemma \ref{lem:rieszdec}, there is an element $\tilde{\pi}^i
  \in ((Z, \|\cdot\|)^\ast)^S$ such that $\tilde{\pi}^i\leq
  \hat{\pi}$ on $S$ and $\sum_{\omega\in S}\tilde{\pi}^i
  (\omega)= \sum_{\omega\in S}\pi_1^i(\omega)$. We claim that for
  each $\omega\in S$, $\langle\tilde{\pi}^i(\omega), x_i(\omega)
  \rangle= \langle\hat{\pi}(\omega), x_i(\omega)\rangle$. To show
  this, let $x_i= \sum_{R\in \mathcal{F}_i}x_i^R {\bf 1}_R$, where
  $x_i^R\in Z_+$. By (3.4),
  \[
  \sum_{\omega\in S}\langle\tilde{\pi}^i(\omega), x_i(\omega)
  \rangle = \left\langle\sum_{\omega\in S}\pi_1^i(\omega),
  x_i^S\right\rangle = \sum_{\omega\in S}\langle\hat{\pi}
  (\omega), x_i(\omega)\rangle.
  \]
  Moreover, $\langle\tilde{\pi}^i(\omega), x_i(\omega)\rangle
  \leq\langle\hat{\pi}(\omega), x_i(\omega)\rangle$ for each
  $\omega\in S$. So, we must have $\langle\tilde{\pi}^i(\omega),
  x_i(\omega) \rangle= \langle\hat{\pi}(\omega), x_i(\omega)
  \rangle$ for each $\omega\in S$, and the claim is verified.
  Since $\mathcal{F}_i$ is a partition of $\Omega$, there is
  an element $\tilde{\pi}^i\in
  ((Z, \|\cdot\|)^\ast)^\Omega$ such that $\tilde{\pi}^i\leq
  \hat{\pi}$ on $\Omega$ and $\langle \tilde{\pi}^i
  (\omega), x_i(\omega)\rangle=\langle\hat{\pi}(\omega), x_i
  (\omega) \rangle$ for each $\omega \in \Omega$. Let $N_0=
  N\cup \{0\}$, $\tilde{\pi}^0(\omega)= 0$ and $a_0(\omega)= 0$
  for each $\omega\in \Omega$. Since $(Z, \|\cdot\|)^\ast$ is
  an ideal, for each $\omega \in \Omega$,
  we can choose an element $\ddot{\pi}(\omega)\in
  (Z, \|\cdot\|)^\ast$ such that $\ddot{\pi} (\omega)=
  \sup\{\tilde{\pi}^i(\omega): i\in N_0\}$. Then $\ddot{\pi}
  \in ((Z, \|\cdot\|)^\ast)^\Omega$, and $\ddot{\pi}\le
  \hat{\pi}$. Define $x_0\in Z_+^\Omega$ such that $x_0(\omega)=
  \sum_{i\in N}a_i(\omega)-\sum_{i\in N}x_i(\omega)$ for each
  $\omega\in \Omega$. By the Riesz-Kantorovich formulas, we
  obtain
  \begin{eqnarray*}
  \left\langle\ddot{\pi}(\omega), \sum_{i\in N}a_i(\omega)\right
  \rangle
  &=& \sup\left\{\sum_{i\in N_0}\langle\tilde{\pi}^i(\omega),
  y_i\rangle: y_i\in Z_+, \sum_{i\in N_0}y_i= \sum_{i\in N}
  a_i(\omega)\right\} \nonumber\\
  &\geq& \sum_{i\in N_0} \langle\tilde{\pi}^i(\omega), x_i(\omega)
  \rangle = \sum_{i\in N} \langle\tilde{\pi}^i(\omega),
  x_i(\omega)\rangle\nonumber\\
  &=& \left\langle\hat{\pi}(\omega), \sum_{i\in N}x_i(\omega)
  \right\rangle = \left\langle\hat{\pi}(\omega), \sum_{i\in N}
  a_i(\omega)\right\rangle.
  \end{eqnarray*}
  Applying $\ddot{\pi}\leq \hat{\pi}$, we have $\langle
  \ddot{\pi}(\omega), \sum_{i\in N}a_i(\omega)\rangle= \langle
  \hat{\pi}(\omega),
  \sum_{i\in N}a_i(\omega)\rangle$ for each $\omega\in \Omega$.
  Note that $Z= L(\sum_{i\in N}a_i(\omega))$ for each $\omega\in
  \Omega$. Let $z\in Z_+$ be fixed. Choose $\delta> 0$ be such
  that $z\leq \delta \sum_{i\in N}a_i(\omega)$ for each $\omega
  \in \Omega$. Then, $\langle \ddot{\pi}(\omega), (\delta
  \sum_{i\in N}a_i (\omega)- z)\rangle\leq \langle \hat{\pi}
  (\omega), (\delta\sum_{i\in N}a_i(\omega)- z)\rangle$, and so
  $\langle \ddot{\pi}(\omega), z\rangle\geq \langle \hat{\pi}
  (\omega), z\rangle$ for each $\omega\in \Omega$. Consequently,
  $\ddot\pi\geq \hat\pi$ and therefore, $\ddot\pi= \hat\pi$.
  Thus, $\hat{\pi}\in ((Z, \|\cdot\|)^\ast)^\Omega$ and
  $(f, \hat{\pi})$ is a non-trivial Walrasian expectations
  quasi-equilibrium in $\mathcal{E}_c|_Z$. By the Hahn-Banach
  theorem, we can choose a positive element $\pi\in ((Y, \|\cdot
  \|)^\ast)^\Omega$ such that $\pi$ is an extension of $\hat{\pi}$.
  Since $L_t\cap Z_+^\Omega$ is $\|\cdot\|^\Omega$-dense in $L_t$
  and $V_t$ is $\|\cdot\|^\Omega$-continuous for each $t\in I$,
  we deduce that $\sum_{\omega\in \Omega}\langle \pi(\omega),
  y(\omega)\rangle\geq \sum_{\omega\in \Omega}\langle\pi(\omega),
  a(t, \omega)\rangle$ for all $y\in L_t$ satisfying
  $V_t(y)> V_t(f(t, \cdot))$. Further, if $\sum_{\omega
  \in \Omega}\langle\pi(\omega), a(t, \omega)\rangle> 0$, then
  $\|\cdot\|^\Omega$-continuity of $V_t$ implies that $(f, \pi)$
  is a non-trivial Walrasian expectations quasi-equilibrium of
  $\mathcal{E}_c$. This completes the proof. 
  \end{proof}

  \begin{corollary} \label{coro:quasiWalras}
  Let $f$ be a feasible allocation such that $f(t, \cdot)= x_i$
  for all $t\in I_i$ and $i\in N$.
  Under {\rm (A$_1$)-(A$_4$)} and {\rm (A$_6$)-(A$_8$)}, $f$ is a
  Walrasian expectations allocation if and only if $f$ is in the
  private core of $\mathcal{E}_c$.
  \end{corollary}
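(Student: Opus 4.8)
The plan is to obtain this corollary almost for free by chaining together Theorem~\ref{thm:core-wal2}, Proposition~\ref{prop:Irreducibilty} and Proposition~\ref{prop:Equivalency}, and then to verify the easier opposite implication by a direct budget argument.

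For the substantive direction, suppose $f$ lies in the private core of $\mathcal{E}_c$. Since (A$_1$)--(A$_4$), (A$_6$) and (A$_8$) all hold, Theorem~\ref{thm:core-wal2} produces a non-zero $\pi\colon\Omega\to Y^\ast_+$ for which $(f,\pi)$ is a non-trivial Walrasian expectations quasi-equilibrium of $\mathcal{E}_c$. Passing to the discrete model via Proposition~\ref{prop:Equivalency}, the pair $(x,\pi)$ — with $x=(x_1,\dots,x_n)$ the profile appearing in the hypothesis, which indeed equals $\big(n\int_{I_i}f(t,\cdot)\,d\mu(t)\big)_{i\in N}$ — is a non-trivial Walrasian expectations quasi-equilibrium of $\mathcal{E}$. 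Because (A$_7$) is now available, Proposition~\ref{prop:Irreducibilty} promotes $(x,\pi)$ to a Walrasian expectations equilibrium of $\mathcal{E}$. Finally, applying Proposition~\ref{prop:Equivalency} together with Remark~\ref{rem:discont} in the reverse direction returns a Walrasian expectations equilibrium $(f,\pi)$ of $\mathcal{E}_c$, so $f$ is a Walrasian expectations allocation.

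For the converse, let $f$ be a Walrasian expectations allocation with price system $\pi$, and suppose toward a contradiction that some coalition $S$ privately blocks $f$ via $g\colon S\times\Omega\to Y_+$. Then $g(t,\cdot)\in L_t$ and $V_t(g(t,\cdot))>V_t(f(t,\cdot))$ for every $t\in S$. Since $(f,\pi)$ satisfies (2.4$^\prime$), $f(t,\cdot)$ maximizes $V_t$ over the budget set $B_t(\pi)$, so necessarily $g(t,\cdot)\notin B_t(\pi)$, i.e.
\[
\sum_{\omega\in\Omega}\langle\pi(\omega),g(t,\omega)\rangle>\sum_{\omega\in\Omega}\langle\pi(\omega),a(t,\omega)\rangle\qquad\text{for all }t\in S.
\]
Integrating this strict inequality over $S$ (legitimate since $\mu(S)>0$ and, for each $\omega$, $\langle\pi(\omega),\cdot\rangle$ commutes with Bochner integration) and then using $\pi(\omega)\in Y^\ast_+$ together with the blocking feasibility $\int_S g(t,\omega)\,d\mu(t)\le\int_S a(t,\omega)\,d\mu(t)$ yields
\[
\sum_{\omega\in\Omega}\Big\langle\pi(\omega),\int_S g(t,\omega)\,d\mu(t)\Big\rangle>\sum_{\omega\in\Omega}\Big\langle\pi(\omega),\int_S a(t,\omega)\,d\mu(t)\Big\rangle\ge\sum_{\omega\in\Omega}\Big\langle\pi(\omega),\int_S g(t,\omega)\,d\mu(t)\Big\rangle,
\]
which is absurd. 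Hence no coalition privately blocks $f$, and $f$ is in the private core of $\mathcal{E}_c$.

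I do not anticipate a genuine obstacle: the only delicate point is that Theorem~\ref{thm:core-wal2} must deliver a \emph{non-trivial} quasi-equilibrium so that irreducibility (A$_7$) can be brought to bear, and this non-triviality is already part of that theorem's conclusion; the remainder is routine translation between the discrete model $\mathcal{E}$ and the continuum model $\mathcal{E}_c$ plus the one-line positivity argument above.
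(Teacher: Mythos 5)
Your proof is correct and follows exactly the route the paper intends for this (unproved) corollary: Theorem~\ref{thm:core-wal2} plus (A$_7$) via Proposition~\ref{prop:Irreducibilty} and the transfer Proposition~\ref{prop:Equivalency} (with Remark~\ref{rem:discont}) for the substantive direction, and the standard budget/feasibility contradiction for the converse. No gaps; the measure-theoretic point about integrating the strict inequality over a coalition of positive measure is handled correctly.
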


  Now, we extend Theorem \ref{thm:core-wal2} to an asymmetric
  information economy with equal treatment property whose
  commodity space is a Banach lattice having no quasi-interior
  point in its positive cone. The following definition of strong
  ATY-properness and the argument to get continuity of the
  equilibrium price in the next theorem are similar to those
  in (A8$^\prime$) and Theorem 3 of \cite{Podczeck:08}.

  \begin{definition}
  The relation $P_i: L_i \rightrightarrows L_i$ is called
  \emph{strongly ATY-proper} at $x_i\in L_i$ if there is a convex
  subset $\widehat{P}_i (x_i)$ of $Y^\Omega$ with non-empty
  $\|\cdot\|^\Omega$-interior such that $\widehat{P}_i(x_i)\cap L_i=
  P_i(x_i)$ and ($\|\cdot\|^\Omega$-int$\widehat{P}_i(x_i)$)
  $\cap L_i\cap L(\sum_{i\in N} a_i)\neq \emptyset$.
  \end{definition}

  \begin{itemize}
  \item[(A$_9$)] \emph{Strong properness}. If $(x_1,...,x_n)$ is a
  privately Pareto optimal allocation in $\mathcal{E}$, then $P_i$
  is strongly ATY-proper at $x_i$ for each $i\in N$.
  \end{itemize}

  \begin{theorem}\label{thm:core-wal3}
  Assume that $\mathcal{E}$ satisfies \emph{(A$_1$)-(A$_3$)},
  \emph{(A$_4^\prime$)}, \emph{(A$_6$)} and \emph{(A$_9$)}. Let $f$ be
  a feasible allocation in $\mathcal{E}_c$ such that $f(t, \cdot)=
  x_i$ for all $t\in I_i$ and $i\in N$. If $f$ is in the private
  core of $\mathcal{E}_c$, then $(f, \pi)$ is a non-trivial Walrasian
  expectations quasi-equilibrium of $\mathcal{E}_c$ for some non-zero
  $\pi: \Omega \to Y^\ast_+$.
  \end{theorem}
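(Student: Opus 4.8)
The plan is to follow the proof of Theorem~\ref{thm:core-wal2} as closely as possible; the one genuine change is that, under (A$_4^\prime$) instead of (A$_4$), the principal ideal generated by the aggregate endowment need not be $\|\cdot\|$-dense in $Y$, and strong ATY-properness (A$_9$) is what takes over the work that density did in Theorem~\ref{thm:core-wal2} --- both in making the price $\|\cdot\|$-continuous and in passing from that ideal back to $Y$. First I would pick $\hat{a}\in Y_+$ according to (A$_6$) and set $Z=L(\hat{a})$, equipped with the order-unit norm $\|\cdot\|_{\hat{a}}$; then $(Z,\|\cdot\|_{\hat{a}})$ is an $AM$-space with unit $\hat{a}$ whose closed unit ball is $[-\hat{a},\hat{a}]$, and by (A$_6$) each $\sum_{i\in N}a_i(\omega)$ is an order unit of $Z$, hence a $\|\cdot\|_{\hat{a}}$-interior (a fortiori quasi-interior) point of $Z_+$. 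Let $\hat{\mathcal E}$ be the economy identical with $\mathcal E$ except that its commodity space is $(Z,\|\cdot\|_{\hat{a}})$, each consumption set is $Z_+$, and each utility is restricted to $Z^\Omega$. Since any feasible allocation of $\mathcal E$ has all of its bundles in $[0,\sum_{j\in N}a_j(\omega)]\subseteq Z_+$, the allocation $f$ is a feasible allocation of $\hat{\mathcal E}_c$, and as $Z_+\subseteq Y_+$ it is still in the private core of $\hat{\mathcal E}_c$. Using $\|\cdot\|\le\|\hat{a}\|\,\|\cdot\|_{\hat{a}}$ on $Z$ one checks that $\hat{\mathcal E}$ satisfies (A$_1$), (A$_2$) (and (A$_3$), (A$_6$)) and, by the previous sentence, (A$_4$).

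Applying Theorem~\ref{thm:core-wal1} to $\hat{\mathcal E}$ gives a non-zero positive $\hat{\pi}\in((Z,\|\cdot\|_{\hat{a}})^\ast)^\Omega$ with $(f,\hat{\pi})$ a non-trivial Walrasian expectations quasi-equilibrium of $\hat{\mathcal E}_c$, hence by Proposition~\ref{prop:Equivalency} $(x_1,\dots,x_n,\hat{\pi})$ is one in $\hat{\mathcal E}$. Because the grand coalition $I$ does not privately block $f$ in $\mathcal E_c$ (a dominating feasible allocation of $\mathcal E$ would, after constant extension over each $I_i$, furnish such a block), the allocation $(x_1,\dots,x_n)$ is privately Pareto optimal in $\mathcal E$, so (A$_9$) applies: for each $i\in N$ fix a convex $\widehat{P}_i(x_i)\subseteq Y^\Omega$ with non-empty $\|\cdot\|^\Omega$-interior such that $\widehat{P}_i(x_i)\cap L_i=P_i(x_i)$, together with a point $v_i\in(\|\cdot\|^\Omega\mbox{-}{\rm int}\,\widehat{P}_i(x_i))\cap L_i\cap L(\sum_{j\in N}a_j)^\Omega$; note $L(\sum_{j\in N}a_j)^\Omega=Z^\Omega$, so $v_i\in P_i(x_i)\cap Z^\Omega$.

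The next step is to upgrade $\hat{\pi}$ to $\|\cdot\|$-continuity on $Z^\Omega$, by repeating the corresponding portion of the proof of Theorem~\ref{thm:core-wal2} with $Q_i:=\widehat{P}_i(x_i)\cap Z^\Omega$ and $\hat{L}_i:=L_i\cap Z^\Omega$ in place of the sets $W_i\cap Z^\Omega$ and $L_i\cap Z^\Omega$ there. The only point at which that proof used quasi-interiority of the aggregate endowment in $Y$ was to conclude, via $\|\cdot\|$-density of $Z_+$ in $Y_+$, that the properness-open set meets $L_i\cap Z_+^\Omega$; here this is supplied directly by the point $v_i$ from (A$_9$), which lies in the relative $\|\cdot\|^\Omega$-interior of $Q_i$ and in $\hat{L}_i$. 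One then applies Lemma~\ref{lem:contfunc} coordinatewise to split $\hat{\pi}=\pi_1^i+\pi_2^i$ with $\pi_1^i\in((Z,\|\cdot\|)^\Omega)^\ast$ supporting $Q_i$ at $x_i$ and $\pi_2^i$ annihilating the cone $\hat{L}_i$ at $x_i$; extracts the relations (3.1)--(3.4); uses that $(Z,\|\cdot\|)^\ast$ is an ideal in the order dual $(Z,\|\cdot\|_{\hat{a}})^\ast$ together with Lemma~\ref{lem:rieszdec} to build, over each $S\in\mathcal F_i$, a positive $\tilde{\pi}^i\le\hat{\pi}$ with $\langle\tilde{\pi}^i(\omega),x_i(\omega)\rangle=\langle\hat{\pi}(\omega),x_i(\omega)\rangle$; and finally forms $\ddot{\pi}(\omega)=\sup\{\tilde{\pi}^i(\omega):i\in N_0\}$ (with $N_0=N\cup\{0\}$, $\tilde{\pi}^0=0$, $a_0=0$) in $(Z,\|\cdot\|)^\ast$ and, through the Riesz--Kantorovich formula applied to $x_0(\omega):=\sum_{i\in N}a_i(\omega)-\sum_{i\in N}x_i(\omega)\ge 0$, shows $\ddot{\pi}=\hat{\pi}$. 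Hence $\hat{\pi}\in((Z,\|\cdot\|)^\ast)^\Omega$ and $(f,\hat{\pi})$ is still a non-trivial Walrasian expectations quasi-equilibrium of $\mathcal E_c|_Z$.

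It remains to pass from $Z$ back to $Y$. A positive $\|\cdot\|$-continuous extension of $\hat{\pi}$ to $Y^\Omega$ always exists (the regular extension of a positive continuous functional off an ideal of a Banach lattice), but an arbitrary such extension need not support the full preferred sets $P_i(x_i)$, only their traces on $Z^\Omega$. Following Theorem~3 of \cite{Podczeck:08}, I would instead exploit the strong-properness sets: since $\widehat{P}_i(x_i)$ is convex with non-empty $\|\cdot\|^\Omega$-interior, $x_i$ lies in its $\|\cdot\|^\Omega$-closure (take $x_i+\varepsilon\hat{a}\,{\bf 1}_\Omega\in P_i(x_i)\cap Z^\Omega$ and let $\varepsilon\downarrow 0$, using (A$_2$)), and $\hat{\pi}$ supports $Q_i$ at $x_i$; a further application of Lemma~\ref{lem:contfunc} in $Y^\Omega$ --- with the $\|\cdot\|^\Omega$-interior of $\widehat{P}_i(x_i)$ as the open convex set and $Z^\Omega$ as the second convex set --- should produce a $\|\cdot\|^\Omega$-continuous functional on $Y^\Omega$ that agrees with $\hat{\pi}$ on $Z^\Omega$ and supports $\widehat{P}_i(x_i)\supseteq P_i(x_i)$ at $x_i$, with positivity on the measurability cones forced by (A$_2$); patching these functionals over $i$ by the same Riesz--Kantorovich supremum as above yields a single non-zero positive $\pi\in((Y,\|\cdot\|)^\ast)^\Omega$ extending $\hat{\pi}$. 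Then for $t\in I_i$ with $\sum_{\omega}\langle\pi(\omega),a(t,\omega)\rangle=\sum_\omega\langle\hat{\pi}(\omega),a_i(\omega)\rangle>0$ and $y\in L_i$ with $V_i(y)>V_i(x_i)$, membership $y\in P_i(x_i)\subseteq\widehat{P}_i(x_i)$ gives $\sum_\omega\langle\pi(\omega),y(\omega)\rangle\ge\sum_\omega\langle\pi(\omega),x_i(\omega)\rangle$; since the budget constraint of the positive-income agent $i$ binds (by (A$_2$)) one has $\sum_\omega\langle\pi(\omega),x_i(\omega)\rangle=\sum_\omega\langle\pi(\omega),a(t,\omega)\rangle$, and (A$_1$) together with positivity of this income upgrades the inequality to the strict one, exactly as in the proof of Theorem~\ref{thm:core-wal1}; hence $f(t,\cdot)$ maximizes $V_t$ on $B_t(\pi)$ and $(f,\pi)$ is a non-trivial Walrasian expectations quasi-equilibrium of $\mathcal E_c$. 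I expect this last step to be the main obstacle: manufacturing a single $\|\cdot\|$-continuous, positive price on $Y^\Omega$ that simultaneously restricts to $\hat{\pi}$ on $Z^\Omega$ and supports the \emph{whole} preferred sets, not merely their intersections with $Z^\Omega$ --- precisely the difficulty that $\|\cdot\|$-density of $Z$ in $Y$ trivialized in Theorem~\ref{thm:core-wal2}, and the reason strong ATY-properness is imposed --- while keeping the splitting and patching bookkeeping of the continuity step consistent with the separation data furnished by (A$_9$).
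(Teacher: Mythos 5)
Your proposal is correct and ultimately assembles the same ingredients as the paper's proof --- the $AM$-space $L(\hat a)$ with Lemma \ref{lem:contfunc} and Lemma \ref{lem:rieszdec} to gain $\|\cdot\|$-continuity of the price on the ideal, then a second round of Lemma \ref{lem:contfunc} driven by the strong-properness sets and a Riesz--Kantorovich supremum to pass to $Y$ --- but it organizes the first half differently. The paper does not re-run the $AM$-space argument: it introduces $X=\|\cdot\|$-cl$\,L(\hat a)$, observes that in $X$ the aggregate endowment is automatically quasi-interior and that (A$_9$) restricts to (A$_8$) on $X^\Omega$ (the strong-properness witness lying in $L(\sum_{i}a_i)\subseteq X^\Omega$ is precisely what makes the interior of the restricted properness set meet $L_i\cap X^\Omega$), and then invokes Theorem \ref{thm:core-wal2} for $\mathcal E|_X$ as a black box, so that only the extension from $X$ to $Y$ has to be argued afresh. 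Your inlined version is viable but obliges you to re-verify the continuity-upgrade step of Theorem \ref{thm:core-wal2} with $v_i$ in place of the density argument; when you do so, take $Q_i=(\|\cdot\|^\Omega\mbox{-int}\,\widehat P_i(x_i))\cap Z^\Omega$ rather than $\widehat P_i(x_i)\cap Z^\Omega$, so that $Q_i$ is relatively open as Lemma \ref{lem:contfunc} requires. Finally, in the patching step the paper does not simply take the supremum of the supporting functionals $\pi_1^i$: it first forms $\tilde\pi^i=\pi_\star^i+\hat\pi-\pi^\star$ by averaging over the elements of $\mathcal F_i$, which is what secures simultaneously that $\tilde\pi^i$ prices $x_i$ like $\hat\pi$, supports $P_i(x_i)$, and is dominated by $\hat\pi$ on $X_+^\Omega$ --- the last property being what forces $\ddot\pi=\sup_{i\in N_0}\tilde\pi^i$ to agree with $\hat\pi$ on $Z^\Omega$ and hence to price the aggregate endowment and the individual incomes correctly. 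You rightly flag this patching as the main obstacle; the averaging construction is the detail your sketch leaves implicit.
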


  \begin{proof}
  Let $f$ be in the private core of $\mathcal{E}_c$. Let $Z=
  L(\hat{a})$, where $\hat{a}$ is selected according to (A$_6$).
  Then, $(X,\|\cdot\|)$ equipped with the ordering of
  $(Y, \|\cdot\|)$ is a Banach lattice, where $X$ denotes the
  $\|\cdot\|$-closure of $Z$ in $Y$. Note that for any feasible
  allocation $(y_1,...,y_n)$ of $\mathcal E$, $y_i(\omega)$ belongs
  to $Z_+$ for each $i\in N$ and $\omega\in \Omega$. In particular,
  $x_i(\omega)\in Z_+$ for each $i\in N$ and $\omega\in \Omega$.
  Clearly, for each $i\in N$ and $\omega \in \Omega$, $U_i(\omega,
  \cdot)|_X$ satisfies (A$_1$)-(A$_3$). Suppose that $(y_1,...,y_n)$
  is a privately Pareto optimal allocation in the economy
  $\mathcal{E}|_X$, which is identical with $\mathcal{E}$ except
  for the commodity space being $X$, each agent's consumption set
  being $X_+$ in each state of
  nature $\omega\in \Omega$, and agent $i$'s ex ante expected
  utility being $V_i|_{X^\Omega}$. Then $(y_1,...,y_n)$ is privately
  Pareto optimal in $\mathcal{E}$. Take $\widetilde{P}_i
  (x_i)= \widehat{P}_i(x_i)\cap X^\Omega$ for each $i\in N$, where
  $\widehat{P}_i(x_i)$ is chosen according to (A$_9$). So, for each
  $i\in N$, $\widetilde{P}_i(x_i)$ is convex with non-empty relative
  $\|\cdot\|^\Omega$-interior in $X^\Omega$. Let $\hat{L}_i= L_i
  \cap X^\Omega$ for each $i\in N$. By (A$_9$), for each $i\in N$,
  $\widetilde{P}_i(x_i)\cap \hat{L}_i= P_i(x_i)|_{X^\Omega}$, where
  $P_i(x_i)|_{X^\Omega}$ is the restriction of $P_i(x_i)$ to $X^\Omega$
  and $($int$\widetilde{P}_i(x_i))\cap \hat{L}_i\neq \emptyset$. Thus,
  (A$_1$)-(A$_4$), (A$_6$) and (A$_8$) are satisfied for the economy
  $\mathcal{E}|_X$. Note that $f$ is in the private core of
  $\mathcal{E}_c|_X$. By Theorem \ref{thm:core-wal2}, there exists a
  non-zero positive element $\pi\in (X^\ast)^\Omega$ such that $(f,
  \pi)$ is a non-trivial Walrasian expectations quasi-equilibrium in
  $\mathcal{E}_c|_ X$. Therefore, by Proposition \ref{prop:Equivalency},
  $(x_1,...,x_n, \pi)$ is a non-trivial Walrasian expectations
  quasi-equilibrium in $\mathcal{E}|_X$. By the Hahn-Banach theorem,
  there is a non-zero positive element $\hat{\pi}\in (Y^\ast)^\Omega$
  which is an extension of $\pi$. Then $(x_1,...,x_n, \hat{\pi})$
  satisfies all conditions of non-trivial Walrasian expectations
  quasi-equilibrium of $\mathcal E$ except for the fact that if
  $\sum_{\omega\in \Omega}\langle\hat{\pi}(\omega), a_i(\omega)
  \rangle\neq 0$, then $\sum_{\omega\in \Omega}\langle\hat{\pi}(\omega),
  y_i(\omega)\rangle> \sum_{\omega\in \Omega}\langle\hat{\pi}(\omega),
  a_i(\omega)\rangle$ for all $y_i\in L_i$ satisfying $V_i(y_i)>
  V_i(x_i)$. Clearly, this is true for all $y_i\in \hat{L}_i$.

  Since $(x_1,...,x_n, \pi)$ is a non-trivial Walrasian expectations
  quasi-equilibrium in $\mathcal{E}|_X$, $(x_1,...,x_n)$ is privately
  Pareto optimal in $\mathcal{E}|_X$ and hence, in $\mathcal{E}$. Pick
  an $i\in N$. By (A$_9$), there is a convex and
  $\|\cdot\|^\Omega$-open subset $Q_i$ of $Y^\Omega$ such that
  $\emptyset\neq Q_i\cap \hat{L}_i\subseteq P_i(x_i)|_{X^\Omega}$
  and $\|\cdot\|^\Omega$-cl$P_i(x_i)\subseteq$
  $\|\cdot\|^\Omega$-cl$Q_i$. By (A$_2$), $x_i\in
  \|\cdot\|^\Omega$-cl$P_i(x_i)$ and hence, $x_i\in
  \|\cdot\|^\Omega$-cl$Q_i$.
  For any $y_i\in Q_i\cap \hat{L}_i$, since $V_i(y_i)> V_i(x_i)$
  and $(x_1, ..., x_n, \hat{\pi})$ is a non-trivial Walrasian
  expectations quasi-equilibrium in $\mathcal{E}|_X$,
  $\sum_{\omega\in \Omega}\langle\hat{\pi}(\omega), y_i(\omega)\rangle
  \geq \sum_{\omega\in \Omega}\langle\hat{\pi}(\omega), x_i(\omega)
  \rangle$. Note that $\hat{L}_i$ is convex, $x_i\in \hat{L}_i$ and
  $\hat{\pi}\in (Y^\Omega)^\ast$. By an argument similar to that in
  Theorem \ref{thm:core-wal2}, we can find an element $\pi_1^i\in
  (Y^\Omega)^\ast$ such that $\sum_{\omega\in \Omega}\langle\pi_1^i
  (\omega), x_i(\omega) \rangle= \sum_{\omega\in \Omega}\langle\hat{\pi}
  (\omega), x_i(\omega)\rangle$, $\sum_{\omega\in \Omega}\langle\pi_1^i
  (\omega), x_i(\omega)\rangle\leq \sum_{\omega\in \Omega}\langle
  \pi_1^i(\omega), y_i(\omega)\rangle$ for all $y_i\in Q_i$, and
  $\sum_{\omega\in \Omega}\langle \pi_1^i(\omega), y_i(\omega)\rangle
  \leq \sum_{\omega\in \Omega} \langle\hat{\pi}(\omega), y_i(\omega)
  \rangle$ for all $y_i\in \hat{L}_i$. Since $\pi_1^i$ is
  $\|\cdot\|^\Omega$-continuous and $\|\cdot
  \|^\Omega$-cl$P_i(x_i)\subseteq \|\cdot\|^\Omega$-${\rm cl}Q_i$,
  $\sum_{\omega\in \Omega} \langle\pi_1^i(\omega), x_i(\omega)
  \rangle\leq \sum_{\omega\in \Omega}\langle\pi_1^i(\omega),
  y_i(\omega)\rangle$ for all $y_i\in P_i(x_i)$. Now, consider elements
  $\pi_\star^i, \pi^\star\in  (Y^\Omega)_+^\ast$ defined by $\langle
  \pi_\star^i, y_i\rangle= \sum_{\omega\in \Omega}\langle \pi_1^i
  (\omega), y_i^S(\omega) \rangle$ and $\langle \pi^\star, y_i\rangle=
  \sum_{\omega\in \Omega}\langle \hat{\pi}(\omega), y_i^S(\omega)
  \rangle$, where $y_i^S= \frac{1}{|S|}\sum_{\omega\in S}y_i(\omega)$
  for $S\in \mathcal{F}_i$. Then $\tilde{\pi}^i= \pi_\star^i+ \hat{\pi}-
  \pi^\star \in (Y^\Omega)^\ast$, and it can be verified that
  \begin{itemize}
  \item [(3.5)] $\sum_{\omega\in \Omega}\langle\tilde{\pi}^i(\omega),
  x_i(\omega)\rangle= \sum_{\omega\in \Omega}\langle\hat{\pi}
      (\omega), x_i(\omega)\rangle$,
  \item [(3.6)] $\sum_{\omega\in \Omega}\langle\tilde{\pi}^i (\omega),
  x_i(\omega)\rangle\leq \sum_{\omega\in \Omega}\langle\tilde{\pi}^i
       (\omega), y_i(\omega)\rangle$ for all $y_i\in P_i(x_i)$, and
  \item [(3.7)] $\sum_{\omega\in \Omega}\langle \tilde{\pi} ^i(\omega),
  z(\omega) \rangle\leq \sum_{\omega\in \Omega} \langle\hat{\pi}
  (\omega), z(\omega)\rangle$ for all $z\in X_+^\Omega$.
  \end{itemize}
  Since $Y$ is a locally solid Riesz space, $Y^\ast$ is an ideal
  in the order dual of $Y$. Let $N_0= N\cup \{0\}$, $\tilde{\pi}^0
  (\omega)= 0$ and $a_0(\omega)= 0$ for each $\omega\in \Omega$.
  Define $\ddot{\pi}\in (Y^\ast)^\Omega$ such that $\ddot{\pi}(\omega)
  =\sup\{\tilde{\pi}^i(\omega): i\in N_0\}$ for each $\omega \in \Omega$,
  and $x_0\in Z_+^\Omega$ such that  $x_0(\omega)= \sum_{i\in N}a_i
  (\omega)- \sum_{i\in N}x_i(\omega)$ for each $\omega\in \Omega$.
  By the Riesz-Kantorovich formulas and techniques similar to those in
  Theorem \ref{thm:core-wal2}, we have $\left\langle\ddot{\pi}(\omega),
  \sum_{i\in N}a_i(\omega)\right\rangle \geq \sum_{i\in N}\left\langle
  \tilde{\pi}^i (\omega), x_i(\omega)\right\rangle$ for each $\omega\in
  \Omega$. Using (3.5), we can obtain $\sum_{\omega\in \Omega}\left
  \langle\ddot{\pi}(\omega), \sum_{i\in N}a_i(\omega) \right\rangle
  \geq\sum_{\omega\in \Omega}\left\langle\hat{\pi}(\omega), \sum_{i\in N}
  a_i(\omega)\right\rangle.$ Moreover, the Riesz-Kantorovich formulas
  and (3.7) imply $\sum_{\omega\in \Omega}\langle\ddot{\pi}(\omega),
  z(\omega)\rangle\leq \sum_{\omega\in \Omega}\langle \hat{\pi}
  (\omega), z(\omega)\rangle$ for all $z\in X_+^\Omega$. Since
  $Z^\Omega= L(\sum_{i\in N}a_i)$, $\ddot{\pi} \equiv \hat{\pi}$ on
  $Z^\Omega$, which can be combined with (3.5) and (3.6)
  to derive $\sum_{\omega\in \Omega}\langle\ddot{\pi}(\omega), x_i
  (\omega)\rangle\leq \sum_{\omega\in \Omega}\langle\ddot{\pi}
  (\omega), y_i(\omega)\rangle$ for all $y_i\in P_i(x_i)$. It follows
  from (A$_2$) and the fact that $(x_1, ..., x_n, \ddot\pi)$ is a
  non-trivial Walrasian expectations equilibrium in ${\mathcal E}|_X$,
  $\sum_{\omega\in \Omega}
  \langle\ddot{\pi}(\omega), x_i(\omega)\rangle= \sum_{\omega\in
  \Omega}\langle\ddot{\pi}(\omega), a_i(\omega)\rangle$. In case
  that $\sum_{\omega \in \Omega}\langle\pi(\omega), a_i(\omega)
  \rangle\neq 0$, by (A$_1$),
  $\sum_{\omega\in \Omega}\langle\ddot{\pi}(\omega), a_i(\omega)
  \rangle < \sum_{\omega\in \Omega}\langle\ddot{\pi} (\omega), y_i
  (\omega)\rangle$ for all $y_i\in P_i(x_i)$. Thus, $(x_1,...,x_n,
  \ddot{\pi})$ is a non-trivial Walrasian expectations quasi-equilibrium
  in $\mathcal E$. Hence, by Proposition $\ref{prop:Equivalency}$, $(f,
  \ddot{\pi})$ is a non-trivial Walrasian expectations
  quasi-equilibrium in $\mathcal{E}_c$. 
  \end{proof}

  \begin{corollary}
  Let $f$ be a feasible allocation such that $f(t, \cdot)= x_i$ for
  all $t\in I_i$ and $i\in N$. Under {\rm (A$_1$)-(A$_3$)}, {\rm
  (A$_4^\prime$)}, {\rm (A$_6$)-(A$_7$)} and {\rm (A$_9$)}, $f$ is
  a Walrasian expectations allocation if and only if $f$ is in the
  private core of $\mathcal{E}_c$.
  \end{corollary}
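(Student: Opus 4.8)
The plan is to assemble this equivalence from Theorem \ref{thm:core-wal3}, Proposition \ref{prop:Irreducibilty}, Proposition \ref{prop:Equivalency} and Remark \ref{rem:discont}, together with a routine verification that Walrasian expectations allocations always lie in the private core; the analytic work has essentially been done already in Theorem \ref{thm:core-wal3}, so what remains is mostly bookkeeping.

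For the ``only if'' direction, suppose $(f, \pi)$ is a Walrasian expectations equilibrium of $\mathcal{E}_c$ with $f(t,\cdot)=x_i$ for $t\in I_i$, and assume toward a contradiction that some coalition $S$ privately blocks $f$ via $g : S \times \Omega \to Y_+$ with $g(t,\cdot)\in L_t$ and $V_t(g(t,\cdot))> V_t(f(t,\cdot))$ for every $t\in S$, and $\int_S g(t,\omega)\,d\mu(t)\le \int_S a(t,\omega)\,d\mu(t)$ for every $\omega\in\Omega$. Since $f(t,\cdot)$ maximizes $V_t$ on $B_t(\pi)$ by the (unconditional) equilibrium condition (2.4$^\prime$), the strict preference of $g(t,\cdot)$ forces $g(t,\cdot)\notin B_t(\pi)$, i.e. $\sum_{\omega\in\Omega}\langle\pi(\omega), g(t,\omega)\rangle > \sum_{\omega\in\Omega}\langle\pi(\omega), a(t,\omega)\rangle$ for every $t\in S$. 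Integrating over $S$ (which has positive measure) and interchanging the finite sum over $\Omega$ with the Bochner integral gives
\[
\sum_{\omega\in\Omega}\left\langle\pi(\omega), \int_S g(t,\omega)\, d\mu(t)\right\rangle > \sum_{\omega\in\Omega}\left\langle\pi(\omega), \int_S a(t,\omega)\, d\mu(t)\right\rangle .
\]
On the other hand, blocking feasibility together with $\pi(\omega)\in Y_+^\ast$ for each $\omega$ yields the reverse inequality, a contradiction. Hence $f$ is in the private core of $\mathcal{E}_c$.

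For the ``if'' direction, assume $f$ is in the private core of $\mathcal{E}_c$. Since $\mathcal{E}$ satisfies (A$_1$)--(A$_3$), (A$_4^\prime$), (A$_6$) and (A$_9$), Theorem \ref{thm:core-wal3} applies and produces a non-zero $\pi : \Omega \to Y_+^\ast$ such that $(f, \pi)$ is a non-trivial Walrasian expectations quasi-equilibrium of $\mathcal{E}_c$. By Proposition \ref{prop:Equivalency}, $(x, \pi)$ is then a non-trivial Walrasian expectations quasi-equilibrium of $\mathcal{E}$, where $x=(x_1,\dots,x_n)$ and $x_i = n\int_{I_i} f(t,\cdot)\,d\mu(t)$. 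Now invoke (A$_7$): by Proposition \ref{prop:Irreducibilty}, every non-trivial Walrasian expectations quasi-equilibrium of $\mathcal{E}$ is a Walrasian expectations equilibrium, so $(x,\pi)$ is a Walrasian expectations equilibrium of $\mathcal{E}$. Finally, by Remark \ref{rem:discont} (the equilibrium analogue of Proposition \ref{prop:Equivalency}), $(f,\pi)$ is a Walrasian expectations equilibrium of $\mathcal{E}_c$, so $f$ is a Walrasian expectations allocation.

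The only place where any care is needed is the ``only if'' direction: one must use that a Walrasian expectations equilibrium (as opposed to a mere quasi-equilibrium) has each agent maximizing on the full budget set, so that strict preference of a blocking plan forces a \emph{strict} overrun of the budget, and then that these strict inequalities integrate to a strict aggregate inequality over the positive-measure coalition $S$ — which is precisely what clashes with the feasibility of the blocking plan and positivity of $\pi$. Everything else is a direct chaining of the previously established results.
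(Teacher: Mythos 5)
Your proposal is correct and follows exactly the derivation the paper intends (the corollary is stated without proof): the ``if'' direction chains Theorem \ref{thm:core-wal3}, Proposition \ref{prop:Equivalency}, Proposition \ref{prop:Irreducibilty} via (A$_7$), and Remark \ref{rem:discont}, while the ``only if'' direction is the standard argument that a blocking plan must strictly violate each blocking agent's budget and hence, integrated over the positive-measure coalition, contradicts feasibility and positivity of the price. No gaps.
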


  %%%%%%%%%%%%%%%%%%%%%%%%%%%%%%%%%%%%%%%%%%%%%%%%%%%%%%%%%%
  \subsection{\it Blocking non-private core allocations}
  \label{sec:private core}
  %%%%%%%%%%%%%%%%%%%%%%%%%%%%%%%%%%%%%%%%%%%%%%%%%%%%%%%%%%

  In this subsection, we give an extension of Vind's theorem to an
  asymmetric information economy with a continuum of agents
  having the equal treatment property and whose commodity space
  is a Banach lattice, using the following lemma instead of any
  convexity theorem on the commodity space.

  \begin{lemma} \label{lem:lemmaforvind}
  Assume that $\mathcal{E}$ satisfies \emph{(A$_1$)},
  \emph{(A$_3$)} and \emph{(A$_5$)}. Let $f$ be an allocation
  such that $f(t, \cdot)= f_i$ for all $t\in I_i$ and $i\in N$.
  If $f$ is privately blocked in $\mathcal{E}_c$, then it is
  privately blocked by a coalition $A$ via a function $g$ such
  that $g(t, \cdot)= g_i\in L_i$ if $t\in A\cap I_i$ and $i\in N$,
  and $\int_A(a(t,\omega)-g(t,\omega))d\mu(t)\ge z$ for all
  $\omega\in\Omega$, where $z> 0$.
  \end{lemma}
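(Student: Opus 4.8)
The plan is to take an arbitrary private block of $f$ and modify it twice: first so that blockers of the same type receive the same bundle, and then so that a state-independent strictly positive surplus is left in the aggregate constraint. So suppose a coalition $S_0$ privately blocks $f$ via $h: S_0\times\Omega\to Y_+$, and put $N^{\ast}=\{\,i\in N:\mu(S_0\cap I_i)>0\,\}$, which is non-empty because $\mu(S_0)>0$. \emph{Equal treatment.} For each $i\in N^{\ast}$ I would set $\bar g_i=\frac{1}{\mu(S_0\cap I_i)}\int_{S_0\cap I_i}h(t,\cdot)\,d\mu(t)$ (the restriction of $h(\cdot,\omega)$ to $S_0\cap I_i$ is Bochner integrable, as $h(\cdot,\omega)$ is). Since $Y_+$ is norm-closed and convex, $\bar g_i(\omega)\in Y_+$ for every $\omega$; and since each $h(t,\cdot)$ is constant on the atoms of $\mathcal{F}_i$, so is $\bar g_i$, hence $\bar g_i\in L_i$. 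As $V_i$ is concave and norm-continuous (by (A$_3$), (A$_1$) and the Remark), Jensen's inequality for Bochner integrals gives $V_i(\bar g_i)\ge \frac{1}{\mu(S_0\cap I_i)}\int_{S_0\cap I_i}V_i(h(t,\cdot))\,d\mu(t)$, and since $V_i(h(t,\cdot))>V_i(f_i)$ for every $t\in S_0\cap I_i$ while $\mu(S_0\cap I_i)>0$, the right-hand side strictly exceeds $V_i(f_i)$; thus $V_i(\bar g_i)>V_i(f_i)$ for all $i\in N^{\ast}$. Moreover $\sum_{i\in N^{\ast}}\mu(S_0\cap I_i)\bar g_i(\omega)=\int_{S_0}h(t,\omega)\,d\mu(t)\le\int_{S_0}a(t,\omega)\,d\mu(t)=\sum_{i\in N^{\ast}}\mu(S_0\cap I_i)a_i(\omega)$, i.e.
\[
\sum_{i\in N^{\ast}}\mu(S_0\cap I_i)\bigl(a_i(\omega)-\bar g_i(\omega)\bigr)\ge 0\qquad(\omega\in\Omega).
\]

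\emph{Uniform surplus.} By (A$_5$), for each $i\in N^{\ast}$ the element $\bar a_i:=\inf\{a_i(\omega):\omega\in\Omega\}$ (a finite infimum, as $\Omega$ is finite) satisfies $\bar a_i>0$. For $\epsilon\in(0,1)$ I would set $g_i^{\epsilon}:=(1-\epsilon)\bar g_i+\epsilon(a_i-\bar a_i)$. Since $a_i\in L_i$ and $\bar a_i$ (viewed as a constant function of $\omega$) lies in $L_i$, while $a_i(\omega)\ge\bar a_i$ forces $g_i^{\epsilon}(\omega)\in Y_+$, we have $g_i^{\epsilon}\in L_i$. As $\epsilon\to 0^{+}$, $g_i^{\epsilon}\to\bar g_i$ in $(Y,\|\cdot\|)^{\Omega}$, so norm-continuity of $V_i$ together with $V_i(\bar g_i)>V_i(f_i)$ yields some $\epsilon\in(0,1)$ with $V_i(g_i^{\epsilon})>V_i(f_i)$ for every $i\in N^{\ast}$ simultaneously (finitely many conditions). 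Put $A:=\bigcup_{i\in N^{\ast}}(S_0\cap I_i)$ and define $g$ on $A\times\Omega$ by $g(t,\cdot)=g_i^{\epsilon}$ for $t\in A\cap I_i$. Then $\mu(A)=\mu(S_0)>0$, $g(t,\cdot)\in L_t$, $V_t(g(t,\cdot))=V_i(g_i^{\epsilon})>V_i(f_i)=V_t(f(t,\cdot))$ for all $t\in A$, and, using $a_i(\omega)-g_i^{\epsilon}(\omega)=(1-\epsilon)\bigl(a_i(\omega)-\bar g_i(\omega)\bigr)+\epsilon\bar a_i$ together with the display above,
\[
\int_A\bigl(a(t,\omega)-g(t,\omega)\bigr)\,d\mu(t)=\sum_{i\in N^{\ast}}\mu(S_0\cap I_i)\bigl(a_i(\omega)-g_i^{\epsilon}(\omega)\bigr)\ge\epsilon\sum_{i\in N^{\ast}}\mu(S_0\cap I_i)\,\bar a_i\qquad(\omega\in\Omega).
\]
Setting $z:=\epsilon\sum_{i\in N^{\ast}}\mu(S_0\cap I_i)\bar a_i$, the right-hand side is independent of $\omega$, and $z>0$ because each summand is strictly positive ($\mu(S_0\cap I_i)>0$ and $\bar a_i>0$) and $\epsilon>0$; in particular $\int_A g(t,\omega)\,d\mu(t)\le\int_A a(t,\omega)\,d\mu(t)$, so $A$ privately blocks $f$ via $g$ with all the asserted properties.

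\emph{The main difficulty} lies in two places. First, one must justify Jensen's inequality in the equal-treatment step in this infinite-dimensional setting: the continuous concave real-valued $V_i$ on the Banach space $(Y,\|\cdot\|)^{\Omega}$ admits a continuous supporting affine functional at the Bochner mean $\bar g_i$, so the pointwise concavity estimate can be integrated, and $t\mapsto V_i(h(t,\cdot))$ is quasi-integrable because it is bounded below by the constant $V_i(f_i)$; one must also check that strict pointwise preference passes to a strict inequality after integration, which it does since $S_0\cap I_i$ has positive measure. Second — and here is where (A$_5$) is essential rather than monotonicity — the downward perturbation in the second step must keep consumption in $Y_+$ without any appeal to (A$_2$): strong positivity furnishes exactly the order-theoretic slack $\epsilon\bar a_i$ that produces a state-independent strictly positive surplus, while (A$_1$) is what keeps the perturbed bundle preferred to $f_i$.
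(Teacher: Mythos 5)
Your proof is correct and follows essentially the same route as the paper's: average the blocking bundles over each type using concavity of $V_i$ to get an equal-treatment block, then perturb slightly, using norm-continuity of $V_i$ to preserve strict preference and (A$_5$) to produce a state-independent positive surplus $z$. The only (cosmetic) difference is the perturbation: the paper uses $g_i^m=(1-c_m)h_i$ and takes $z$ to be the lattice infimum over the finite $\Omega$ of $c_m\sum_i a_i(\omega)\mu(\hat A_i)$, whereas you add $\epsilon(a_i-\bar a_i)$ so that the surplus $\epsilon\sum_i\mu(S_0\cap I_i)\bar a_i$ is state-independent by construction; both hinge on (A$_5$) in the same way, and you rightly flag the Jensen step for Bochner integrals, which the paper also relies on but does not elaborate.
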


  \begin{proof}
  Since $f$ is privately blocked in ${\mathcal E}_c,$
  there are a coalition $\hat{A}\subseteq I$ and an $\hat{h}:
  \hat{A}\times \Omega\rightarrow Y_+$ such that $\hat{h}(t,
  \cdot)\in L_t$ and $V_{t}(\hat{h}(t, \cdot))> V_{t}(f(t,
  \cdot))$ for all $t\in \hat{A}$, and $\int_{\hat{A}} \hat{h}
  (t, \omega) d\mu(t)\le\int_{\hat{A}}a(t, \omega)
  d\mu(t)$ for all $\omega \in \Omega$. Let
  $\hat{A}_i = \hat{A} \cap I_i$ for each $i \in N$, $\hat{N}
  =\{i\in N: \mu(\hat{A}_i) \neq 0\}$, and $A=\bigcup_{i\in
  \hat{N}}\hat{A}_i$. For each $i\in \hat{N}$ and $\omega\in
  \Omega$, put $h_i(\omega) = \frac{1}{\mu(\hat{A}_i)}
  \int_{\hat{A}_i} \hat{h}(t,\omega) d\mu(t)$. Then $h_i\in L_i$
  for all $i\in \hat{N}$. Define $h: A\times \Omega\rightarrow
  Y_+$ by $h(t, \omega)=h_i(\omega)$ if $t\in \hat{A}_i$. Clearly,
  for each $\omega\in \Omega$, $\int_{A}h(t, \omega)d\mu(t)\leq
  \int_{A}a(t, \omega)d\mu(t)$, equivalently, $\sum_{i\in \hat{N}}
  h_i(\omega)\mu(\hat{A}_i)\leq\sum_{i\in \hat{N}}a_i
  (\omega)\mu(\hat{A}_i)$. Moreover, concavity of $V_i$ implies
  that $V_i(h(t, \cdot))> V_i(f(t, \cdot))$ for all $t\in
  \hat{A}_i$ and all $i\in \hat{N}$.

  Choose a sequence $\{c_m\}\subseteq (0, 1)$ with $c_m\to 0$.
  For each $i\in \hat{N}$ and each integer $m \ge 1$, define a
  function $g_i^m: \Omega \to Y_+$ by $g_i^m(\omega)=
  (1-c_m)h_i(\omega)$. Then $g_i^m \in L_i$ for all $i\in
  \hat{N}$. For each $i\in \hat{N}$, since $h_i\in P_i(f_i)$
  and $P_i(f_i)$ is $\|\cdot\|^\Omega$-open, then $g_i^m\in
  P_i(f_i)$ whenever $m$ is sufficiently large. If
  we choose such an $m$, then
  $\sum_{i\in \hat{N}}g_i^m(\omega)\mu(\hat{A}_i)
  \le (1-c_m)\sum_{i\in \hat{N}}a_i(\omega)\mu(\hat{A}_i)$,
  and
  \[
  \sum_{i\in \hat{N}}a_i(\omega)\mu(\hat{A}_i)-  \sum_{i\in
  \hat{N}} g_i^m(\omega)\mu(\hat{A}_i)
  \geq c_m\sum_{i\in \hat{N}}a_i(\omega)\mu(\hat{A}_i).
  \]
  Let $z= \inf\left\{c_m \sum_{i\in \widehat{N}}a_i(\omega)
  \mu(\hat{A}_i): \omega\in \Omega\right\}$. By (A$_5$), we
  have $z> 0$. If we define $g: A\times \Omega\to Y_+$ such
  that $g(t, \omega)= g_i^m(\omega)$ for all $t\in \hat{A}_i$,
  then $\int_A(a(t,\omega)-g(t,\omega))d\mu(t)\geq z$ for
  all $\omega\in\Omega$ and $g(t,\cdot)\in L_t$ for all $t
  \in A$, which is required by the lemma. \qed
  \end{proof}

  \begin{theorem}\label{thm:infintevind}
  Assume that $\mathcal{E}$ satisfies {\rm (A$_1$)}-{\rm (A$_3$)}
  and {\rm (A$_5$)}. Let $f$ be a feasible allocation in
  $\mathcal{E}_c$ such that $f(t,\cdot)= f_i$ for all $t\in I_i$
  and $i\in N$. If $f$ is not in the private core of
  $\mathcal{E}_c$, then for any $0<\epsilon<1$, there is a
  coalition $S$ with $\mu(S)=\epsilon$ privately blocking $f$.
  \end{theorem}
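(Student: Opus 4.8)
The plan is to use Lemma~\ref{lem:lemmaforvind} to extract a blocking coalition that carries a uniform, strictly positive ``surplus'', and then to assemble a coalition of measure exactly $\epsilon$ whose feasibility is underwritten jointly by that surplus and by the slack in the feasibility of $f$ itself. Since $f$ is feasible but not in the private core, it is privately blocked, so Lemma~\ref{lem:lemmaforvind} (applicable because $\mathcal{E}$ satisfies (A$_1$), (A$_3$), (A$_5$)) provides a coalition $A$, bundles $g_i\in L_i$ with $V_i(g_i)>V_i(f_i)$ for every $i$ with $\mu(A\cap I_i)>0$, and an element $z\in Y_+$ with $z>0$ such that $\int_A(a(t,\omega)-g(t,\omega))\,d\mu(t)=\sum_i\mu(A\cap I_i)\bigl(a_i(\omega)-g_i(\omega)\bigr)\ge z$ for every $\omega\in\Omega$. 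Under the equal treatment property, a coalition $S$ privately blocks $f$ as soon as one can exhibit $\mathcal{F}_i$-measurable bundles $h_i$ with $V_i(h_i)>V_i(f_i)$ and $\sum_i\mu(S\cap I_i)\,h_i(\omega)\le\sum_i\mu(S\cap I_i)\,a_i(\omega)$ for all $\omega$; so the task reduces to choosing type-masses $s_i:=\mu(S\cap I_i)\in[0,1/n]$ with $\sum_i s_i=\epsilon$ together with suitable bundles.

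For the construction I would take $s_i=\alpha\,\mu(A\cap I_i)+\beta/n$ with $\beta=\epsilon-\alpha\,\mu(A)$, where $\alpha$ is any number in $(0,\min\{\epsilon/\mu(A),\,1-\epsilon\}]$; this interval is nonempty since $0<\epsilon<1$ and $\mu(A)>0$. Then $\beta\ge0$, $\sum_i s_i=\epsilon$, and $n s_i=\alpha\bigl(n\mu(A\cap I_i)-\mu(A)\bigr)+\epsilon\le\alpha+\epsilon\le 1$ (using $n\mu(A\cap I_i)\le 1$ and $\alpha\le 1-\epsilon$), so $s_i\le 1/n$; since $\mu$ is non-atomic, pick Borel sets $S_i\subseteq I_i$ with $\mu(S_i)=s_i$ and set $S=\bigcup_i S_i$, so that $\mu(S)=\epsilon$. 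To the type-$i$ agents in $S$ I would assign $h_i=\tfrac{\alpha\mu(A\cap I_i)}{s_i}\,g_i+\tfrac{\beta/n}{s_i}\,f_i+b$ when $\mu(A\cap I_i)>0$, and $h_i=f_i+b$ when $\mu(A\cap I_i)=0$, where $b:=\alpha z\in Y_+$ is a strictly positive constant bonus (hence trivially $\mathcal{F}_i$-measurable, and $h_i\in L_i$ takes values in $Y_+$). Strict improvement follows at once: for a type outside the original blocking coalition it comes from monotonicity (A$_2$) via $b$; for a type inside it, $h_i-b$ is a convex combination of $g_i$ and $f_i$ with strictly positive weight on $g_i$, so concavity (A$_3$) and $V_i(g_i)>V_i(f_i)$ give $V_i(h_i-b)>V_i(f_i)$, which (A$_2$) upgrades to $V_i(h_i)>V_i(f_i)$.

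The decisive point is feasibility. Setting $g(t,\cdot)=h_i$ on $S_i$, for each $\omega\in\Omega$ one has
\[
\sum_i s_i\bigl(h_i(\omega)-a_i(\omega)\bigr)=\alpha\sum_i\mu(A\cap I_i)\bigl(g_i(\omega)-a_i(\omega)\bigr)+\frac{\beta}{n}\sum_i\bigl(f_i(\omega)-a_i(\omega)\bigr)+\epsilon\, b.
\]
The first sum on the right is $\le-\alpha z$ by the surplus inequality from Lemma~\ref{lem:lemmaforvind}; the second is $\le 0$ because $f$ is feasible, so $\sum_i f_i(\omega)\le\sum_i a_i(\omega)$; and since $b=\alpha z$ the whole right-hand side is $\le(\epsilon-1)\alpha z\le 0$. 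Hence $\int_S g(t,\omega)\,d\mu(t)\le\int_S a(t,\omega)\,d\mu(t)$ for every $\omega$, so $S$ privately blocks $f$, which is what we wanted.

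The step I expect to be the genuine obstacle is this feasibility check in the regime where $\epsilon$ is close to $1$: there $S$ must be nearly the grand coalition and hence must contain agents of types that took no part in the original block, and --- given only (A$_1$)--(A$_3$) and (A$_5$) --- the only way to make such an agent strictly better off is to hand her $f_i$ plus a strictly positive increment, which then has to be financed. Financing that increment purely out of the surplus $z$ can fail, since $f$ may allocate to the original blocking types much more, or much less, than their endowments, so that simply ``enlarging $A$ while keeping $g$'' need not remain feasible. Blending $g_i$ with $f_i$ in the ratio $\alpha\mu(A\cap I_i):\beta/n$ is precisely what lets the resulting deficit be absorbed jointly by the surplus $z$ (scaled by $\alpha$) and by the feasibility slack of $f$ (scaled by $\beta/n$); the familiar ``shrinking'' case $\epsilon\le\mu(A)$ is recovered as the special case $\beta=0$.
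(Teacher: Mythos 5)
Your proof is correct and follows essentially the same route as the paper's: both rest on Lemma \ref{lem:lemmaforvind}, dilute $g$ toward $f$ by a convex combination, and finance a uniform positive bonus (a multiple of the surplus $z$) to the newly recruited agents, so that feasibility is underwritten jointly by the surplus and the feasibility slack of $f$. The only real difference is packaging: you fold the paper's two cases ($\epsilon\le\mu(A)$ via shrinking, and $\mu(A)<1$ via $S=A\cup B$ with the bonus paid only on $B$) into a single one-parameter family of type-weights $s_i=\alpha\mu(A\cap I_i)+\beta/n$, which moreover hits $\mu(S)=\epsilon$ exactly, whereas the paper's second case produces $\mu(S)=\mu(A)+(1-\epsilon)\mu(I\setminus A)$ and leaves the reparametrization to the reader.
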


  \begin{proof}
  Since $f$ is not in the private core of ${\mathcal E}_c$,
  by Lemma \ref{lem:lemmaforvind}, there is a coalition $A
  \subseteq I$ that privately blocks $f$ via a function $g: A
  \times \Omega \to Y_+$ such that $g(t,\cdot)=g_i\in L_i$,
  if $t\in \hat{A}_i$ and $\int_{A}(a(t, \cdot)-g(t, \cdot))
  d\mu(t) \geq z$, where $\hat{A}_i = A \cap I_i$ for all $i
  \in N$. Choose a $\delta\in(0, 1)$. Since $\mu$ is atomless,
  there is $E_i\subseteq \hat{A}_i$ such that
  $\mu(E_i)= \delta\mu(\hat{A}_i)$. Moreover, for any $t \in
  \hat{A}_i$, $a(t,\cdot)-g(t, \cdot)=a_i- g_i$. Hence,
  %\begin{eqnarray*}
  \[
  \int_{E_i}(a(t, \cdot)-g(t, \cdot))d\mu(t)
  = (a_i- g_i)\delta\mu(\hat{A}_i)
  = \delta\int_{\hat{A}_i}(a(t, \cdot)-g(t,\cdot))
  d\mu(t).
  \]
  %\end{eqnarray*}
  Take $E=\bigcup_{i\in \hat{N}}E_i$. Then,
  $\int_{E}(a(t, \cdot)-g(t, \cdot))d\mu(t)
  = \delta\int_{A}(a(t, \cdot)-g(t, \cdot))d\mu(t)$, and
  $\mu(E)=\delta \mu(A)$.
  Since $\delta\int_{A}(a(t, \cdot)-g(t, \cdot))d\mu(t)> 0$
  for any $0< \delta< 1,$ there is a coalition $E\subseteq A$
  with $\mu(E)= \delta \mu(A)$ privately blocking $f$ via $g$.
  This proves the theorem for $\epsilon\leq \mu(A).$

  If $\mu(A)= 1,$ the proof has been completed. Otherwise, $\mu(I
  \setminus A)>0$, and for any given $0<\epsilon< 1$, we define
  a function $g_{\epsilon}: A\times \Omega\to Y_+$ such that
  $g_{\epsilon}(t,\omega)=\epsilon g(t,\epsilon)+
  (1-\epsilon) f(t, \omega)$. Then, $g_{\epsilon}(t,\cdot)\in
  L_t$ for all $t\in A$. By concavity of $V_{t}$, we have $V_t
  (g_{\epsilon}(t, \cdot))> V_{t}(f(t, \cdot))$ for all
  $t\in A.$ Let $\hat{C}= I\setminus A$, $C_i= \hat{C}\cap
  I_i$, $\hat{N}_1=\{i\in N:\mu(C_i)> 0\}$ and $C=
  \bigcup_{i\in \hat{N}_1} C_i$. Then $\mu(\hat{C})=\mu(C)$.
  If $i\in \hat{N}_1$, since $\mu$ is atomless, there is
  $B_i\subseteq C_i$ such that $\mu(B_i)=(1-\epsilon) \mu(C_i)$.
  Moreover, $a(t,\cdot)- f(t, \cdot)=a_i- f_i$ for any $t\in
  C_i$. Hence
  \[
  \int_{B_i}(a(t, \cdot)-f(t, \cdot))d\mu(t)
  = (1- \epsilon)\int_{C_i}(a(t, \cdot)-f(t, \cdot))d\mu(t).
  \]
  Let $B=\bigcup_{i\in \hat{N}_1}B_i$. Then $\mu(B)=
  (1- \epsilon)\mu(I\setminus A)$ and
  \[
  \int_{B}(a(t, \cdot)-f(t, \cdot))d\mu(t)
  = (1- \epsilon)\int_{I\setminus A}(a(t, \cdot)-
  f(t, \cdot))d\mu(t).
  \]
  Define a consumption bundle $h_{\epsilon}:B\times\Omega\to
  Y_+$ by $h_{\epsilon}(t, \omega)= f(t, \omega)+
  \frac{\epsilon\mu(A)}{\mu(B)}z$. Since $f(t, \cdot)\in L_t$
  and $z$ is constant, $h_{\epsilon}(t, \cdot)\in L_t$ for
  all $t\in B.$ By monotonicity of $V_t$, $V_{t}(h_{\epsilon}
  (t, \cdot))>V_{t}(f(t, \cdot))$ for all $t\in B.$

  Let $S=A\cup B$. Then, $\mu(S)=\mu(A) +(1- \epsilon)
  \mu(I\setminus A).$ Now we show that $S$ privately blocks $f$.
  Define a consumption bundle $y_{\epsilon}: S\times \Omega
  \to Y_+$ such that $y_{\epsilon}(t, \omega) = g_{\epsilon}(t,
  \omega)$ if $(t, \omega)\in A\times \Omega$, and $y_{\epsilon}
  (t, \omega)= h_{\epsilon}(t,\omega)$, if $(t, \omega)\in B\times
  \Omega$. Clearly, $y_\epsilon(t, \cdot)\in L_t$
  and $V_t(y_\epsilon(t, \cdot))> V_{t}(f(t, \cdot))$ for all
  $t\in S.$ It remains to verify that $y_{\epsilon}$ is
  feasible for $S$. Since
  $\int_{A}(a(t, \cdot)-g(t, \cdot))d\mu(t)\geq z$, we have
  \begin{eqnarray*}
  \int_{S}(a(t, \cdot)-y_{\epsilon}(t, \cdot))d\mu(t)
  &\ge& \epsilon z+ (1-\epsilon) \int_{A}(a(t, \cdot)-f(t, \cdot))
  d\mu(t)\\
  &&+\int_{B}(a(t, \cdot)-f(t, \cdot))d\mu(t)- \epsilon\mu(A)z.
  \end{eqnarray*}
  On the other hand, $\epsilon z-\epsilon\mu(A)z =\epsilon
  \mu(I\setminus A)z>0$, and
  \[
  \int_{B}(a(t, \cdot)-f(t, \cdot))d\mu(t) =
  (1- \epsilon)\int_{I\setminus A}(a(t, \cdot)-f(t, \cdot))d\mu(t).
  \]
  Combining the previous few inequalities and equalities, we
  obtain
  \[
  \int_{S}(a(t, \cdot)-y_{\epsilon}(t, \cdot))d\mu(t)
  >(1- \epsilon)\int_I (a(t, \cdot)-f(t, \cdot))d\mu(t)\geq 0,
  \]
  which verifies that $S$ privately blocks $f$ via $y_{\epsilon}$.
  \end{proof}

  %%%%%%%%%%%%%%%%%%%%%%%%%%%%%%%%%%%%%%%%%%%%%%%%%%%%%%%%%%%%%%%%%
  \section{Characterizations of Walrasian expectations equilibrium}
  \label{sec:Equitheodiseco}%%%%%%%%%%%
  %%%%%%%%%%%%%%%%%%%%%%%%%%%%%%%%%%%%%%%%%%%%%%%%%%%%%%%%%%%%%%%%%

  In this section, we provide characterizations of Walrasian
  expectations equilibrium in terms of the private blocking power
  of the grand coalition. The following concepts for asymmetric
  information economies in \cite{Herves-Beloso-Moreno-Garcia-Yannelis:05b} 
  will be needed.

  \begin{definition}
  A coalition $S\subseteq N$ \emph{privately blocks an allocation}
  $x$ of $\mathcal{E}$ in the sense of Aubin via $y=(y_i)_{i \in
  S}$ if for all $i\in S$,
  there is $\alpha_i\in (0, 1]$ such that $V_i(y_i)> V_i(x_i)$
  and $\sum_{i\in S} \alpha_i y_i\leq \sum_{i\in S}\alpha_i a_i$.
  The \emph{Aubin private core} of $\mathcal{E}$ is the set of all
  feasible allocations which cannot be blocked by any coalition in
  the sense of Aubin, and an allocation $x$ in $\mathcal{E}$
  is called an \emph{Aubin dominated allocation} if $x$ is
  privately blocked by the grand coalition in the sense of Aubin.
  \end{definition}

  The proof of our first theorem in this section is similar to
  that Theorem 4.2 in \cite{Herves-Beloso-Moreno-Garcia-Yannelis:05b}.

  \begin{theorem}\label{thm:fuzzycoreequilibria1}
  Assume that $\mathcal{E}$ satisfies {\rm (A$_1$)-(A$_2$)},
  {\rm (A$_4$)-(A$_5$)} and {\rm (A$_7$)}, and its commodity
  space has an interior point in its positive
  cone. Then, a feasible allocation $x$ in $\mathcal E$ is a
  Walrasian expectations allocation if and only if $x$ is Aubin 
  non-dominated.
  \end{theorem}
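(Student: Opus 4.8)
The plan is to prove the two directions separately. The direction ``Walrasian expectations allocation $\Rightarrow$ Aubin non-dominated'' is a short direct argument using only positivity of the supporting price system; the converse is the substantial part, and it will be obtained by passing to the associated equal-treatment continuum economy $\mathcal{E}_c$ and invoking our extension of Vind's theorem. For the first direction, suppose $(x,\pi)$ is a Walrasian expectations equilibrium and, for contradiction, that the grand coalition $N$ privately blocks $x$ in the sense of Aubin via $y=(y_i)_{i\in N}$ with weights $\alpha_i\in(0,1]$, so $y_i\in L_i$, $V_i(y_i)>V_i(x_i)$ for all $i\in N$, and $\sum_{i\in N}\alpha_i y_i\le\sum_{i\in N}\alpha_i a_i$. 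By (2.1$^\prime$), $x_i$ maximizes $V_i$ on $B_i(\pi)$, so $V_i(y_i)>V_i(x_i)$ forces $y_i\notin B_i(\pi)$, i.e. $\sum_{\omega\in\Omega}\langle\pi(\omega),y_i(\omega)\rangle>\sum_{\omega\in\Omega}\langle\pi(\omega),a_i(\omega)\rangle$ for each $i\in N$. Multiplying the $i$-th inequality by $\alpha_i>0$ and summing over $i$ gives $\sum_{\omega\in\Omega}\big\langle\pi(\omega),\sum_{i\in N}\alpha_i y_i(\omega)\big\rangle>\sum_{\omega\in\Omega}\big\langle\pi(\omega),\sum_{i\in N}\alpha_i a_i(\omega)\big\rangle$, which contradicts $\pi(\omega)\in Y^\ast_+$ together with $\sum_{i\in N}\alpha_i y_i(\omega)\le\sum_{i\in N}\alpha_i a_i(\omega)$ for every $\omega$. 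Hence $x$ is Aubin non-dominated.

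For the converse, let $f$ be the equal-treatment allocation of $\mathcal{E}_c$ determined by $f(t,\cdot)=x_i$ on $I_i$; I will show that if $x$ is Aubin non-dominated then $f$ lies in the private core of $\mathcal{E}_c$, arguing by contraposition. Assume $f$ is privately blocked in $\mathcal{E}_c$. By Theorem~\ref{thm:infintevind}, for any prescribed $\epsilon\in(0,1)$ there is a coalition $S\subseteq I$ with $\mu(S)=\epsilon$ that privately blocks $f$ via some $g:S\times\Omega\to Y_+$ with $g(t,\cdot)\in L_t$, $V_t(g(t,\cdot))>V_t(f(t,\cdot))$ on $S$, and $\int_S g(t,\omega)\,d\mu(t)\le\int_S a(t,\omega)\,d\mu(t)$ for all $\omega\in\Omega$. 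Choose $\epsilon$ with $1-\tfrac1n<\epsilon<1$; then $\mu(I_i\setminus S)\le\mu(I\setminus S)=1-\epsilon<\tfrac1n$, so $\mu(S\cap I_i)>0$ for every $i\in N$. Put $g_i(\omega)=\mu(S\cap I_i)^{-1}\int_{S\cap I_i}g(t,\omega)\,d\mu(t)$; then $g_i\in L_i$, by concavity of $V_i$ and Jensen's inequality $V_i(g_i)\ge\mu(S\cap I_i)^{-1}\int_{S\cap I_i}V_i(g(t,\cdot))\,d\mu(t)>V_i(f_i)=V_i(x_i)$, and $\sum_{i\in N}\mu(S\cap I_i)\,g_i(\omega)=\int_S g(t,\omega)\,d\mu(t)\le\sum_{i\in N}\mu(S\cap I_i)\,a_i(\omega)$ for all $\omega\in\Omega$. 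Setting $\alpha_i=n\,\mu(S\cap I_i)\in(0,1]$ then exhibits the grand coalition privately blocking $x$ in the sense of Aubin via $(g_i)_{i\in N}$, contradicting Aubin non-domination of $x$. Thus $f$ belongs to the private core of $\mathcal{E}_c$.

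It then remains to invoke the Corollary to Theorem~\ref{thm:core-wal1} (valid under (A$_1$), (A$_2$), (A$_4$), (A$_7$)): since $f$ lies in the private core of $\mathcal{E}_c$, it is a Walrasian expectations allocation of $\mathcal{E}_c$, and Proposition~\ref{prop:Equivalency} together with Remark~\ref{rem:discont} transfers this to $\mathcal{E}$, where the recovered discrete allocation $n\int_{I_i}f(t,\cdot)\,d\mu(t)$ is the original $x_i$; hence $x$ is a Walrasian expectations allocation, completing the proof. The step I expect to be the main obstacle, and the real content of the argument, is the upgrade of an arbitrary private block of $f$ to one whose coalition meets every type class $I_i$ in positive measure: this is precisely what forces the detour through $\mathcal{E}_c$ and the use of the Vind-type Theorem~\ref{thm:infintevind} (which is where assumption (A$_5$) is consumed), while (A$_4$) and (A$_7$) enter only at the final step, through the core--equilibrium equivalence and the passage from quasi-equilibrium to equilibrium.
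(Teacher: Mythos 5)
Your proof is correct and follows exactly the route the paper intends: the paper omits the proof, saying only that it is ``similar to that of Theorem 4.2 in \cite{Herves-Beloso-Moreno-Garcia-Yannelis:05b}'', and that reference argues precisely as you do --- a direct price argument for the forward implication, and for the converse a passage to $\mathcal{E}_c$, the Vind-type theorem to obtain a blocking coalition of measure greater than $1-\tfrac1n$ meeting every type class, averaging to produce Aubin weights $\alpha_i=n\mu(S\cap I_i)$, and then the core--Walras equivalence (Corollary \ref{coro:quasiWalras} via (A$_4$), (A$_7$)). The only caveat is that your argument consumes concavity (A$_3$) twice --- Theorem \ref{thm:infintevind} assumes it, and your Jensen step needs it --- whereas (A$_3$) is absent from the theorem's stated hypothesis list; this appears to be an oversight in the paper's statement rather than a defect of your proof, since the paper's own Remark \ref{rem:compart} likewise invokes Theorem \ref{thm:infintevind} in this context and the cited source assumes concavity throughout.
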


  \begin{remark}[\cite{Herves-Beloso-Moreno-Garcia-Yannelis:05b}]
  \label{rem:compart}
  The participation of an agent $i$ in the grand coalition of
  $\mathcal{E}$ is said to be \emph{close to complete} if
  $\tilde{\alpha}_i> 1- \delta$ for sufficiently small
  $\delta>0$. Indeed, Theorem \ref{thm:fuzzycoreequilibria1}
  shows that the participation of each agent can be chosen to
  be close to complete. To see this, for any given $0<\delta<
  1$, by Theorem \ref{thm:infintevind}, we can choose a
  privately blocking coalition $S$ such that $\mu(S)> 1-
  \frac{\delta}{n}$. Hence, $\tilde{\alpha}_i= n\mu(S_i)> 1-
  \delta$ for all $i\in N$.
  \qed
  \end{remark}

  \begin{remark}\label{rem:fuzzycoreequilibria2}
  Conclusions of Theorem \ref{thm:fuzzycoreequilibria1} and
  Remark \ref{rem:compart} are also true under (A$_1$)-(A$_8$),
  or A$_1$)-(A$_3$), (A$_5$)-(A$_7$) and (A$_9$)), respectively.
  \qed
  \end{remark}

  The proof of our next theorem is similar to that of Theorem
  $4.1$ in \cite{Herves-Beloso-Moreno-Garcia-Yannelis:05a,
  Herves-Beloso-Moreno-Garcia-Yannelis:05b}. For an
  allocation $x=(x_1, ...,x_n)$ in ${\mathcal E}$ and a vector
  $r= (r_1,...,r_n) \in [0,1]^n$, consider an asymmetric
  information economy $\mathcal{E}(r, x)$ which is identical
  with $\mathcal{E}$ except for the random initial endowment of
  each agent $i$ being $a_i(r_i, x_i)= r_i a_i+ (1-r_i) x_i$.

  \begin{theorem}\label{thm:radequnondom}
  Assume that $\mathcal{E}$ satisfies {\rm (A$_1$)-(A$_2$)},
  {\rm (A$_4$)-(A$_5$)} and {\rm (A$_7$)}, and its commodity
  space has an interior point in its positive cone. Then, a
  feasible allocation $x$ in $\mathcal E$ is a Walrasian
  expectations allocation if and only if $x$ is privately
  non-dominated in $\mathcal{E}(r,x)$ for any $r\in [0, 1]^n$.
  \end{theorem}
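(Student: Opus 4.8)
The plan is to reduce the statement to Theorem \ref{thm:fuzzycoreequilibria1} via the classical observation that an Aubin veto of $x$ in $\mathcal{E}$ by the grand coalition with participation vector $(\alpha_1,\dots,\alpha_n)$ translates, after dividing through, into an ordinary private veto of $x$ by the grand coalition in the perturbed economy $\mathcal{E}(r,x)$ for the right choice of $r$, and conversely. More precisely, I will show: $x$ is Aubin non-dominated in $\mathcal{E}$ if and only if $x$ is privately non-dominated in $\mathcal{E}(r,x)$ for every $r\in[0,1]^n$. Once this equivalence is established, Theorem \ref{thm:fuzzycoreequilibria1} (applicable since the hypotheses (A$_1$)--(A$_2$), (A$_4$)--(A$_5$), (A$_7$) and the interior-point assumption are exactly those assumed here) immediately gives that the latter is equivalent to $x$ being a Walrasian expectations allocation, completing the proof.

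For the direction ``Aubin non-dominated $\Rightarrow$ privately non-dominated in every $\mathcal{E}(r,x)$'', I would argue by contraposition. Suppose $x$ is privately dominated in $\mathcal{E}(r,x)$ for some $r\in[0,1]^n$: there is a feasible allocation $y=(y_i)$ of $\mathcal{E}(r,x)$ with $V_i(y_i)>V_i(x_i)$ for all $i$ and $\sum_{i\in N} y_i(\omega)\le \sum_{i\in N} a_i(r_i,x_i)(\omega)=\sum_{i\in N}\big(r_i a_i(\omega)+(1-r_i)x_i(\omega)\big)$ for every $\omega$. The natural move is to set $z_i=(1-r_i)x_i+r_i y_i$; then $\sum z_i=\sum\big((1-r_i)x_i+r_i y_i\big)\le \sum\big((1-r_i)x_i+r_i a_i\big)$, which is exactly the feasibility-of-$x$ bound used to rearrange, but it is cleaner to observe directly that $\sum_{i}\big(y_i-(1-r_i)x_i\big)\le \sum_i r_i a_i$, i.e. writing $\alpha_i=r_i$ one gets a fuzzy-type inequality once the $(1-r_i)x_i$ terms are absorbed. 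Here a small technical point arises when some $r_i=0$: then agent $i$ contributes $x_i$ on both sides and plays no active role, so one restricts the Aubin-blocking coalition to $S=\{i: r_i>0\}$ and must still ensure the strict utility gain and that $S$ is non-empty; by monotonicity (A$_2$) and feasibility one shows $S=N$ works or, if not, one still obtains a genuine Aubin veto by the (sub)coalition $S$. Conversely, if $x$ is Aubin dominated via $y=(y_i)_{i\in N}$ with coefficients $\alpha_i\in(0,1]$ and $\sum\alpha_i y_i\le\sum\alpha_i a_i$, I set $r_i=\alpha_i$ and check that $y$ (suitably completed on agents with $\alpha_i=1$, where nothing changes) privately dominates $x$ in $\mathcal{E}(\alpha,x)$: feasibility there reads $\sum y_i\le\sum\big(\alpha_i a_i+(1-\alpha_i)x_i\big)$, which follows by adding $\sum(1-\alpha_i)y_i$ to both sides of the Aubin inequality and bounding $\sum(1-\alpha_i)y_i$ — but this last bound is not automatic, so instead I pass through convex combinations: replace $y_i$ by $z_i=\alpha_i y_i+(1-\alpha_i)x_i$, which is $\mathcal{F}_i$-measurable, satisfies $V_i(z_i)>V_i(x_i)$ by concavity (A$_3$) — wait, (A$_3$) is \emph{not} assumed here, so one must instead use that $V_i(y_i)>V_i(x_i)$ together with monotonicity and a direct estimate, or observe $z_i\le$ something; the correct, assumption-free route is to note $\sum_i z_i=\sum_i\big(\alpha_i y_i+(1-\alpha_i)x_i\big)\le\sum_i\alpha_i a_i+\sum_i(1-\alpha_i)x_i=\sum_i a_i(\alpha_i,x_i)$, so $z$ is feasible in $\mathcal{E}(\alpha,x)$, and for the strict inequality one uses that each agent can be taken to have \emph{close to complete} participation — this is the content of Remark \ref{rem:compart}.

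The main obstacle, and the reason Remark \ref{rem:compart} is invoked, is precisely the strict utility inequality $V_i(z_i)>V_i(x_i)$ for the convex combination $z_i=\alpha_i y_i+(1-\alpha_i)x_i$: without concavity of $V_i$ this is false in general for small $\alpha_i$. The resolution is that Theorem \ref{thm:fuzzycoreequilibria1}, via its proof through Theorem \ref{thm:infintevind} (Vind's theorem) and Remark \ref{rem:compart}, guarantees that whenever $x$ is Aubin dominated it is Aubin dominated by the grand coalition with participation coefficients $\alpha_i=n\mu(S_i)$ arbitrarily close to $1$; taking $\delta>0$ small and using norm-continuity (A$_1$) of each $V_i$, the bundle $z_i=\alpha_i y_i+(1-\alpha_i)x_i$ lies in a small norm-neighbourhood of $y_i$, hence still satisfies $V_i(z_i)>V_i(x_i)$. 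Thus the equivalence goes through: $x$ Walrasian $\Leftrightarrow$ $x$ Aubin non-dominated (Theorem \ref{thm:fuzzycoreequilibria1}) $\Leftrightarrow$ $x$ privately non-dominated in $\mathcal{E}(r,x)$ for all $r\in[0,1]^n$, where in the last step the ``$\Rightarrow$'' uses the direct rearrangement above and the ``$\Leftarrow$'' uses the close-to-complete participation argument; I would also check that $\mathcal{E}(r,x)$ inherits (A$_1$)--(A$_2$), (A$_4$)--(A$_5$), (A$_7$) whenever $x$ is feasible and the $r_i$ are bounded away from $0$, so that no hypothesis is lost along the way.
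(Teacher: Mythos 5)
The paper does not actually write out a proof of Theorem \ref{thm:radequnondom}; it defers to Theorem 4.1 of \cite{Herves-Beloso-Moreno-Garcia-Yannelis:05a, Herves-Beloso-Moreno-Garcia-Yannelis:05b}, where the two implications are proved separately: the ``only if'' part directly from the supporting price, and the ``if'' part from the core equivalence (Corollary \ref{coro:quasiWalras}) together with Theorem \ref{thm:infintevind}. Your plan instead routes everything through Theorem \ref{thm:fuzzycoreequilibria1} by first proving ``Aubin non-dominated $\Leftrightarrow$ privately non-dominated in every $\mathcal{E}(r,x)$'', and that is where two genuine gaps sit. First, the implication ``privately dominated in some $\mathcal{E}(r,x)$ $\Rightarrow$ Aubin dominated'' is never actually established: from $\sum_i y_i\le\sum_i\bigl(r_ia_i+(1-r_i)x_i\bigr)$ the natural candidate for an Aubin veto with weights $\alpha_i=r_i$ is $w_i=\bigl(y_i-(1-r_i)x_i\bigr)/r_i$, which need not lie in $(Y_+)^\Omega$, and ``absorbing the $(1-r_i)x_i$ terms'' is not an argument --- the coefficients $1-r_i$ differ across agents, so feasibility of $x$ cannot be applied termwise. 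This direction of the theorem should bypass Aubin entirely: if $(x,\pi)$ is a Walrasian expectations equilibrium and $y$ privately dominates $x$ in $\mathcal{E}(r,x)$, then $V_i(y_i)>V_i(x_i)$ forces $\sum_{\omega}\langle\pi(\omega),y_i(\omega)\rangle>\sum_{\omega}\langle\pi(\omega),a_i(\omega)\rangle\ge\sum_{\omega}\langle\pi(\omega),r_ia_i(\omega)+(1-r_i)x_i(\omega)\rangle$, and summing over $i$ contradicts feasibility of $y$ in $\mathcal{E}(r,x)$ and positivity of $\pi$.

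Second, in the converse direction your substitute for concavity does not close. You correctly identify $V_i\bigl(\alpha_iy_i+(1-\alpha_i)x_i\bigr)>V_i(x_i)$ as the crux and propose to take $\alpha_i$ close to $1$ via Remark \ref{rem:compart} and invoke norm-continuity; but the preferred bundles $y_i$ produced by Theorem \ref{thm:infintevind} change with the target measure $\mu(S)$ (they are themselves convex combinations built inside the Vind construction), so there is no fixed $y_i$ to which $z_i$ converges and no uniform lower bound on the utility gap --- continuity alone does not yield the strict inequality. Moreover, Remark \ref{rem:compart} rests on Theorem \ref{thm:infintevind}, whose hypotheses include (A$_3$), so the attempt to dispense with concavity is illusory: the intended argument simply uses (A$_3$) to get $V_i(z_i)\ge\alpha_iV_i(y_i)+(1-\alpha_i)V_i(x_i)>V_i(x_i)$, and (A$_3$) must in effect be counted among the hypotheses (compare Remark \ref{rem:radequnondom1} and the hypothesis list of Theorem \ref{thm:infintevind}). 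With (A$_3$) available the ``if'' direction runs: $x$ not Walrasian implies, by Corollary \ref{coro:quasiWalras}, that $f$ is privately blocked in $\mathcal{E}_c$; by Theorem \ref{thm:infintevind} it is blocked by a coalition $S$ with $\mu(S_i)>0$ for every $i$; and then $r_i=n\mu(S_i)$ and $z_i=r_iy_i+(1-r_i)x_i$ privately dominate $x$ in $\mathcal{E}(r,x)$.
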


  \begin{remark}[\cite{Herves-Beloso-Moreno-Garcia-Yannelis:05b}]
  \label{rem:comblock}
  Note that for each agent $i\in N$, $\tilde{r}_i$ can be selected
  arbitrarily close to $1$. To see this, for any given
  $0<\delta< 1$, by Theorem \ref{thm:infintevind}, we can choose
  a privately blocking coalition $S$ such that $\mu(S)> 1- 
  \frac{\delta}{n}$. Hence, $\tilde{r}_i= n\mu(S_i)> 1- \delta$
  for all $i\in N$. \qed
  \end{remark}

  \begin{remark}\label{rem:radequnondom1}
  Conclusion of Theorem \ref{thm:radequnondom} is also true
  under (A$_1$)-(A$_8$), or A$_1$)-(A$_3$), (A$_5$)-(A$_7$) and
  (A$_9$)), respectively. The same is true
  for Remark \ref{rem:comblock}. \qed
  \end{remark}

  \begin{remark}[\cite{Herves-Beloso-Moreno-Garcia-Yannelis:05b}]
  Theorem \ref{thm:radequnondom} and the first part of the Remark
  \ref{rem:radequnondom1} not only provide characterizations of
  Walrasian expectations allocation in terms of veto power of
  the grand coalition but also implies two welfare theorems as
  particular cases. If $x$ is a Walrasian expectations allocation
  in $\mathcal{E}$, then $x$ is a privately Pareto optimal
  allocation in $\mathcal{E}$ and any $\mathcal{E}(r, x)$
  with $r\in [0, 1]^n$. Thus, the first welfare theorem
  is an immediate consequence of Theorem \ref{thm:radequnondom}
  and the first part of Remark \ref{rem:radequnondom1}.

  Observe that if $x$ is privately Pareto optimal in
  $\mathcal{E}$, then $x$ is also privately Pareto optimal
  in $\mathcal{E}(0, x)$. If we choose $x= a$, then all
  economies $\mathcal{E}(r, x)$ are equal to $\mathcal{E}(0, x)$
  and $x$ is not privately blocked by the grand coalition.
  If $\inf\{x_i(\omega): \omega\in \Omega\}> 0$ for all $i\in N$,
  Theorem \ref{thm:radequnondom} and the first part of Remark
  \ref{rem:radequnondom1} implies that $x$ is
  a Walrasian expectations allocation, which is exactly the
  second welfare theorem. \qed
  \end{remark}

  We conclude this section with some basic examples of
  Banach lattices and a chart containing a summary where these
  Banach lattices are applied: (i) $\mathbb {R}^n$: the 
  Euclidean space; (ii) $\ell_\infty$: the space of real 
  bounded sequence with the supremum norm; (iii) $L_\infty 
  (\Omega, \Sigma, \mu)$: the space of essentially bounded, 
  measurable functions on a measure space $(\Omega, \Sigma, 
  \mu)$ with the essential supremum norm; (iv) $C(K)$: the 
  space of real valued continuous functions on a compact 
  Hausdorff space $K$ with the supremum norm; (v) $\ell_p$: 
  the space  of real sequences $x= \{x_n\}$ with the 
  norm $\|x\|_p=\left(\sum_{n=1}^\infty |x_n|^p
  \right)^{\frac{1}{p}}<\infty$ , where $1\leq p< \infty$;
  (vi) $L_p (\Omega, \Sigma, \mu)$: the space of measurable
  functions $f$ on the measure space $(\Omega, \Sigma, \mu)$ 
  with the norm $\|f\|_p=\left(\int_\Omega |f(\omega)|^p 
  d\mu(\omega) \right) ^{\frac{1}{p}}<\infty$, where 
  $1\leq p< \infty$; (vii) $M(K)$: the space of regular 
  Borel measures on a compact Hausdorff space $K$ with the 
  total variation norm.
  
  \[
  \begin{tabular}{|c|c|c|c|}
  \hline
  \multirow{2}{*}{$Y$} & $Y_+$ having & $Y_+$ having &
  \multirow{2}{*}{Applications}\\
   &  interior points & quasi-interior points
  & \\ \hline
  \multirow{2}{*}{$\mathbb R^n$} & \multirow{2}{*}{+} &
  \multirow{2}{*}{+} & Theorems \\
  & & & 1, 4, 5, 6 \\ \hline
  \multirow{2}{*}{$\ell_\infty$} & \multirow{2}{*}{+} &
  \multirow{2}{*}{+} & Theorems \\
  & & & 1, 4, 5, 6 \\ \hline
  \multirow{2}{*}{$L_\infty(\Omega, \Sigma, \mu)$} & \multirow{2}{*}{+} &
  \multirow{2}{*}{+} & Theorems \\
  & & & 1, 4, 5, 6 \\ \hline
  \multirow{2}{*}{$C(K)$} & \multirow{2}{*}{+} &
  \multirow{2}{*}{+} & Theorems \\
  & & & 1, 4, 5, 6 \\ \hline
  $\ell_p$  &  \multirow{2}{*}{-} & \multirow{2}{*}{+} &
  Theorems 2, 4\\
  ($1\le p <\infty$) &  &  & Remarks 4, 6\\ \hline
  $L_p(\Omega, \Sigma, \mu)$ & \multirow{2}{*}{-} &
  \multirow{2}{*}{+} & Theorems 2, 4\\
  ($1\le p <\infty$) &  &  & Remarks 4, 6\\ \hline
  $M(K)$ & \multirow{2}{*}{-} &
  \multirow{2}{*}{-} & Theorems 3, 4\\
  ($K$ uncountable) &  &  & Remarks 4, 6\\ \hline
  \end{tabular}
  \]

  \vskip 2em
  %%%%%%%%%%%%%%%%%%%%%%%%%%%%%%%%%%%%%%%%%%%%%%%%%%%%%%%%%%%%%%%%
  \noindent {\bf Appendix }~\label{sec: proper}%%%%%%%%%%%%%%%%%%
  %%%%%%%%%%%%%%%%%%%%%%%%%%%%%%%%%%%%%%%%%%%%%%%%%%%%%%%%%%%%%%%%

  \vskip 2em
  \noindent
  (A.1) \emph{Results on economies with arbitrary probability
  spaces of states}

  \vskip 2em
  \noindent
  In this section, we apply results established in previous
  sections, particularly Theorem \ref{thm:infintevind}, to
  characterize the ex-post core and private core of an economy
  with an arbitrary probability space of states of nature.

  Let $\widetilde{\mathcal E}$ be a discrete economy identical
  with $\mathcal E$ except for the space of states of
  nature being a probability measure space $(\Omega, \Sigma, \nu)$,
  where $\Omega$ is an (infinite) arbitrary set of states of nature,
  the $\sigma$-algebra $\Sigma$ denotes the set of events and the
  probability measure $\nu$ denotes the \emph{common prior} of each
  agent $i\in N$. For any $x_{i}:\Omega\to Y_+$, the \emph{ex ante
  expected utility} of agent $i$ is given by $V_{i}(x_{i})= \int_\Omega
  U_{i} (\omega, x_{i}(\omega))\nu(d\omega)$. The concepts of an
  \emph{assignment} and an \emph{allocation} in $\widetilde{\mathcal
  E}$ are defined in the same way as that in $\mathcal E$. Further,
  an allocation (assignment) $x$ is said to be \emph{feasible} if
  $\sum_{i= 1}^{n} x_{i}(\omega) \leq \sum_{i= 1}^{n}a_{i}
  (\omega)$ for almost all $\omega\in \Omega$. A feasible
  assignment $x$ of $\mathcal{E}$ is called \emph{fine non-dominated}
  (resp. \emph{weak fine non-dominated}) if for all $i\in N$, $x_i$ is
  $\mathcal{F}_i$-measurable (resp. $\bigvee_{i\in N}
  \mathcal{F}_i$-measurable) and there is no feasible assignment
  $y= (y_1,...,y_n)$ such that $y_i$ is $\bigvee_{i \in N}
  \mathcal{F}_i$-measurable and $V_i(y_i)> V_i(x_i)$ for all $i\in N$.
  A \emph{coalition} is a non-empty subset $S$ of $N$. A coalition
  $S \subseteq N$ \emph{privately blocks an allocation}
  $x$ in $\widetilde{\mathcal E}$ if there exists $y=
  (y_i)_{i\in S}$ such that $y_i\in L_i$ and $V_i(y_i)> V_i(x_i)$
  for all $i\in S,$ and $\sum_{i\in S}y_i(\omega)\leq \sum_{i\in S}
  a_i(\omega)$ for almost all $\omega\in \Omega$. The \emph{private
  core} of the economy $\widetilde{\mathcal E}$ is the set of all
  feasible allocations which are not privately blocked by any
  coalition. A feasible allocation $x$ of $\widetilde{\mathcal E}$
  is in the \emph{ex-post core} \cite{Einy-Moreno-Shitovitz:01}
  of $\widetilde{\mathcal E}$ if there
  are no coalition $S \subseteq N$, $y= (y_i)_{i\in S}$, and
  $\Omega_0\in \Sigma$ such that $\nu(\Omega_0) > 0$, $\sum_{i\in S}
  y_i(\omega)\leq \sum_{i\in S} a_i(\omega)$ for almost all $\omega
  \in \Omega_0$, and $U_i(\omega, y_i(\omega))> U_i(\omega,
  x_i(\omega))$ for all $i\in S$ and for almost all $\omega\in
  \Omega_0$.

  Assume that $\widetilde{\mathcal{E}}_c$ is a continuum economy
  associated with $\widetilde{\mathcal E}$. Since $\mathcal{F}_t=
  \mathcal{F}_i$ for each agent $t\in I_i$, infinitely many states
  can be taken in $\widetilde{\mathcal E}_c$ even agents share
  their information. An \emph{assignment}
  in $\widetilde{\mathcal E}_c$ is a function $f:I\times \Omega \to
  Y_+$ such that $f(\cdot, \omega)\in L_1(\mu, Y_+)$ for almost all
  $\omega\in \Omega$. Let $L_t= L_i$ for all $t\in I_i$ and $i\in N$.
  An assignment $f$ in $\widetilde{\mathcal E}_c$ is called an
  \emph{allocation} if $f(t,\cdot)\in L_t$ for almost all $t\in I$. An
  allocation $f$ in $\widetilde{ \mathcal{E}}_c$ is {\it feasible} if
  $\int_I f(t, \omega)d\mu(t)\leq \int_I a(t, \omega)d\mu(t)$ for
  almost all $\omega\in \Omega$. A coalition $S$ \emph{privately
  blocks an allocation} $f$ in $\widetilde{\mathcal
  E}_c$ if there is $g: S\times \Omega\to Y_+$ such that $g(t,
  \cdot)\in L_t$ and $V_t(g(t,\cdot))> V_t(f(t,\cdot))$ for all
  $t\in S,$ and $\int_{S}g(t, \omega)d\mu(t)\leq
  \int_{S}a(t, \omega) d\mu(t)$ for almost all $\omega\in \Omega$.
  The \emph{private core} of $\widetilde{\mathcal E}_c$ is
  the set of feasible allocations not privately blocked
  by any coalition. A feasible allocation $f$ of $\widetilde{\mathcal
  E}_c$ is in the \emph{ex-post core} of $\widetilde{\mathcal E}_c$
  if there are no coalition $S \subseteq I$, function $g: S\times
  \Omega \to Y_+$, and $\Omega_0\in \Sigma$ satisfying $\nu(\Omega_0)
  > 0$, $\int_S g(t, \omega) d\mu(t)\leq \int_S a(t, \omega)d\mu(t)$
  for almost all $\omega\in \Omega_0$, $U_t(\omega, g(t,\omega))
  > U_t(\omega, f(t, \omega))$ for all $t\in S$ and for almost all
  $\omega \in \Omega_0$.

  Next, we establish a relation between the set of all fine
  non-dominated allocations and the ex-post core. This result is
  new even for finite dimensional commodity space. For an $\omega
  \in\Omega$, let $\widetilde{\mathcal E}_c(\omega)$ denote the
  full information economy whose commodity space being $Y_+$,
  the set of agents being $I$, the utility function and initial
  endowment of agent $t$ being $U_t(\omega, \cdot)$ and $a(t,
  \omega)$ respectively.

  \begin{theorem}\label{thm:Ex-postCore}
  Assume that $\widetilde{\mathcal E}$ satisfies \emph{(A$_1$)-(A$_3$)}
  and \emph{(A$_5$)}. Let $x$ be a feasible allocation of
  $\widetilde{\mathcal E}$. If $x$ is fine non-dominated in
  $\widetilde{\mathcal E}(r,x)$ for any $r\in [0, 1]^n$, then $x$
  is in the ex-post core of $\widetilde{\mathcal E}.$
  \end{theorem}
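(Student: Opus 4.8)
The plan is to argue by contraposition. Suppose $x$ is \emph{not} in the ex-post core of $\widetilde{\mathcal{E}}$; I will exhibit a vector $r\in[0,1]^n$ and a $\bigvee_{i\in N}\mathcal{F}_i$-measurable assignment that is feasible in $\widetilde{\mathcal{E}}(r,x)$ and strictly improves on $x$ for every agent, which contradicts the hypothesis. Passing to the associated continuum economy $\widetilde{\mathcal{E}}_c$, the equal-treatment structure shows that the simple allocation $f$ with $f(t,\cdot)=x_i$ on $I_i$ is ex-post blocked in $\widetilde{\mathcal{E}}_c$: there are a coalition $S\subseteq I$, a map $g\colon S\times\Omega\to Y_+$, and an $\Omega_0\in\Sigma$ with $\nu(\Omega_0)>0$ such that $\int_S g(t,\omega)\,d\mu(t)\le\int_S a(t,\omega)\,d\mu(t)$ and $U_t(\omega,g(t,\omega))>U_t(\omega,f(t,\omega))$ for almost all $\omega\in\Omega_0$. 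Thus, for almost every $\omega\in\Omega_0$, the coalition $S$ blocks $f(\cdot,\omega)$ in the full-information continuum economy $\widetilde{\mathcal{E}}_c(\omega)$, a one-state economy for which (A$_1$)--(A$_3$) and (A$_5$) put us under the hypotheses of Theorem~\ref{thm:infintevind}.

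I would now carry out, in $\widetilde{\mathcal{E}}_c$, the construction behind Lemma~\ref{lem:lemmaforvind} and Theorem~\ref{thm:infintevind}: scale the blocking bundles by a common factor $1-c$ and, using continuity (A$_1$), shrink $\Omega_0$ to a set $\Omega_1$ of positive measure on which the strict improvement persists; strong positivity (A$_5$) then forces the resulting aggregate surplus $\int_S\!\big(a(t,\cdot)-(1-c)g(t,\cdot)\big)d\mu(t)$ to dominate a fixed positive element on $\Omega_1$, and this surplus is used, via monotonicity (A$_2$), to bring the agents of the types absent from $S$ into the blocking coalition (handing each of them $f(t,\cdot)$ plus an arbitrarily small slice of the surplus), the enlargement being arranged, using atomlessness of $\mu$, so that the coalition meets every $I_i$ in one common measure $\varepsilon/n$ with $\varepsilon\in(0,1]$. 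Averaging the blocking bundles over $S\cap I_i$ and invoking concavity (A$_3$) condenses the data into per-type bundles $g_i\colon\Omega\to Y_+$ with $U_i(\omega,g_i(\omega))>U_i(\omega,x_i(\omega))$ and $\sum_{i\in N}g_i(\omega)\le\sum_{i\in N}a_i(\omega)$ for almost all $\omega\in\Omega_1$. Taking $r=(\varepsilon,\dots,\varepsilon)$ and setting $h_i:=\varepsilon g_i+(1-\varepsilon)x_i$ on $\Omega_1$ and $h_i:=x_i$ off $\Omega_1$, concavity gives $U_i(\omega,h_i(\omega))\ge\varepsilon U_i(\omega,g_i(\omega))+(1-\varepsilon)U_i(\omega,x_i(\omega))>U_i(\omega,x_i(\omega))$ for a.a. $\omega\in\Omega_1$, so $V_i(h_i)>V_i(x_i)$ for every $i$; on $\Omega_1$ one has $\sum_ih_i\le\varepsilon\sum_ia_i+(1-\varepsilon)\sum_ix_i$, which is precisely the aggregate endowment of $\widetilde{\mathcal{E}}(r,x)$, while off $\Omega_1$ feasibility of $x$ gives $\sum_ih_i=\sum_ix_i\le\sum_ia_i$, hence again $\sum_ih_i\le\varepsilon\sum_ia_i+(1-\varepsilon)\sum_ix_i$ because the weight $\varepsilon$ is common to all agents.

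The one step that is not routine — and which I expect to be the main obstacle — is that $h$ must be $\bigvee_{i\in N}\mathcal{F}_i$-measurable, whereas the bundles $g_i$ inherit the unrestricted state-dependence of the original ex-post block. To remedy this I would condition $g_i$ on the atoms of $\bigvee_{i\in N}\mathcal{F}_i$: on each such atom $x_i$ is constant, so it suffices to find, atom by atom, a constant bundle whose conditional expected utility over the atom exceeds that of $x_i$ while the aggregate resource inequality survives the averaging. Concavity controls the averaged utility in only one direction, so this is where (A$_1$), (A$_3$) and (A$_5$) must be combined with care: $\Omega_1$ has first to be trimmed, using continuity, to a set on which the atom-wise strict inequality is robust to the averaging, and the surplus guaranteed by (A$_5$) is spent to absorb the attendant slack in the resource balance. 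Carrying this through yields a $\bigvee_{i\in N}\mathcal{F}_i$-measurable assignment of $\widetilde{\mathcal{E}}(r,x)$ that is feasible and strictly improves on $x$ for every agent, i.e. $x$ is fine dominated in $\widetilde{\mathcal{E}}(r,x)$; this contradiction establishes the theorem.
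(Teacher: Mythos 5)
Your overall skeleton (contraposition, passage to $\widetilde{\mathcal E}_c$, invocation of Theorem~\ref{thm:infintevind} state by state, type-averaging via concavity, and the convex combination with $x$ that lands in $\widetilde{\mathcal E}(r,x)$) matches the paper's, but the argument fails exactly at the step you yourself single out as ``the main obstacle'': producing a $\bigvee_{i\in N}\mathcal{F}_i$-measurable dominating assignment. Your proposed remedy --- conditioning the state-dependent blocking bundles $g_i$ on the atoms of $\bigvee_{i\in N}\mathcal{F}_i$ and replacing them by a constant whose conditional expected utility beats that of $x_i$ --- does not go through: within an atom the functions $U_i(\omega,\cdot)$ still vary with $\omega$ (no measurability of $U_i(\cdot,y)$ in the state is assumed), so concavity of $y\mapsto\int U_i(\omega,y)\,\nu(d\omega)$ only compares $\int U_i(\omega,\bar g_i)\,\nu(d\omega)$ with the average over $\omega'$ of $\int U_i(\omega,g_i(\omega'))\,\nu(d\omega)$, which is not the quantity $\int U_i(\omega,g_i(\omega))\,\nu(d\omega)$ you need to dominate; neither (A$_1$) nor (A$_5$) repairs this. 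As written, the proof stops one step short of the conclusion.

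The paper avoids the problem by never attempting an improvement across all atoms. It fixes a single cell $A\in\bigvee_{i\in N}\mathcal{F}_i$ with $\nu(\Omega_0\cap A)>0$, on which each $x_i$ and $a_i$ is constant; applies Theorem~\ref{thm:infintevind} to the one-state economies $\widetilde{\mathcal E}_c(\omega)$ for $\omega\in\Omega_0\cap A$ to get a blocking coalition $S^A$ with $\mu(S^A)>1-\frac1n$, so that every type is represented; picks one state $\omega_0\in\Omega_0\cap A$ at which the blocking holds and freezes the bundle there, i.e.\ uses $h^A(t,\omega)=g^A(t,\omega_0)$ for all $\omega\in A$, which is automatically $\bigvee_{i\in N}\mathcal{F}_i$-measurable; and finally sets $b_i=z_i^A$ on $A$ and $b_i=x_i$ off $A$, with $r_i=n\mu(S^A\cap I_i)$. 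Since fine domination is judged by the ex ante utilities $V_i$, a strict improvement on the positive-$\nu$-measure set $A$ together with equality elsewhere already yields $V_i(b_i)>V_i(x_i)$; no improvement outside $A$ is needed, so the measurability difficulty you wrestle with never arises. To salvage your route, localize to a single atom in this way rather than trying to dominate $x$ on all of $\Omega_1$.
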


  \begin{proof}
  Suppose that $x$ is not in the ex-post core of $\widetilde{\mathcal
  E}$. Then, the simple function $f$ on $I$, defined by $f(t,\cdot)=
  x_i$ for all $t\in I_i$ and $i\in N$, is not in the ex-post core of
  $\widetilde{\mathcal E}_c$. Thus, there is $\Omega_0\in \Sigma$ with
  $\nu(\Omega_0)> 0$ such that $f(\cdot, \omega)$ is feasible and
  not in the core of $\widetilde{\mathcal E}_c(\omega)$ for almost
  all $\omega\in \Omega_0$. Let $A \in \bigvee_{i\in N}\mathcal{F}_i$
  with $\nu(\Omega_0\cap A)\neq 0$. Applying Theorem
  \ref{thm:infintevind} for each $\widetilde{\mathcal E}_c(\omega)$
  and the fact that $f(t, \cdot)$ and $a(t,\cdot)$ are constant on
  $\Omega_0 \cap A$ for all $t\in I$, there exist a coalition $S^A
  \subseteq I$ with $\mu(S^A)> 1-\frac{1}{n}$, and a function $g^A:
  S^A\times (\Omega_0\cap A)\to Y_+$ such that
  \begin{enumerate}
  \item[(A.1.1)] $g^A(t, \cdot) \in L_t$ for all $t\in S^A$ and
  $\int_{S^A} g^A (t,\omega)d\mu(t)\leq \int_{S^A} a(t,\omega)
  d\mu(t)$ for almost all $\omega\in \Omega_0\cap A$,
  \item[(A.1.2)] $U_{t}(\omega, g^A(t, \omega))> U_{t}(\omega, f(t,
  \omega))$ for all $t\in S^A$ and for almost all $\omega\in
  \Omega_0\cap A$.
  \end{enumerate}
  Pick an $\omega_0 \in \Omega_0 \cap A$ such that both (A.1.1) and
  (A.1.2) hold. Now, define a function $h^A: S^A\times A\to Y_+$ such
  that $h^A(t,\omega) =g^A(t, \omega_0)$ for all $(t, \omega) \in
  S^A \times A$. Then, $h^A(t, \cdot)$ is $\bigvee_{i= 1}^n
  \mathcal{F}_i$-measurable for all $t\in S^A$ and $h^A(\cdot,
  \omega)$ is Bochner integrable for all $\omega\in A$. Since $a(t,
  \cdot)\in L_i$ for all $t\in I_i$ and $i\in N$,
  $a(\cdot,\omega)= a(\cdot, \omega^\prime)$ for all
  $\omega, \omega^\prime \in A$. Hence, $\int_{S^A}h^A(t, \omega)
  d\mu(t)\leq \int_{S^A}a(t,\omega)d\mu(t)$ for all $\omega\in A.$

  Let $S_i^A= S^A\cap I_i$ and $r_i^A= n\mu(S_i^A)$ for all $i\in N$.
  Since $\mu(S^A)> 1-\frac{1}{n},$ we have $r_i^A> 0$ for all
  $i\in N$. Now for all $i\in N$, $y_i^A= \frac{1}{\mu(S_i^A)}
  \int_{S_i^A}h^A(t, \cdot) d\mu(t)$ is $\bigvee_{i= 1}^n
  \mathcal{F}_i$-measurable. By (A$_3$), $U_i(\omega, y_i^A(\omega))
  > U_i(\omega, x_i(\omega))$ for all $i\in N$ and $\omega\in A$.
  Further, following from $\sum_{i\in N}\mu(S_i^A)y_i^A\leq \sum_{
  i\in N}\mu(S_i^A)a_i$, we have $\sum_{i\in N} r_i^A y_i^A\leq
  \sum_{i\in N} r_i^A a_i$. If we put $z_i^A= r_i^A y_i^A+(1- r_i^A)
  x_i$ for all $i\in N$, then $z_i^A$ is $\bigvee_{i=1}^n
  \mathcal{F}_i$-measurable and
  \[
  \sum_{i\in N} z_i^A\leq \sum_{i\in N}\{r_i^A a_i+
  (1- r_i^A)x_i\} =\sum_{i\in N} a_i(r_i^A, x_i).
  \]
  By (A$_3$) again, $U_i(\omega, z_i^A(\omega))>U_i(\omega, x_i
  (\omega))$ for all $i\in N$ and $\omega\in A$. Now for each
  $i\in N$, the assignment $b_i:\Omega\to Y_+$ defined by $b_i
  (\omega) =  z_i^A$ if $\omega\in A$ and $b_i(\omega) =  x_i$ if
  $\omega\not \in A$, is $\bigvee_{i= 1}^n \mathcal{F}_i$-measurable,
  and $b= (b_1, ..., b_n)$ is feasible in $\widetilde{\mathcal E}
  (r^A, x)$, where $r^A= (r_1^A, ..., r_n^A)$. Since
  \[
  V_i(b_i)
  > \int_{A} U_i(\omega, x_i(\omega))\nu(d\omega) +
  \int_{\Omega\setminus A} U_i(\omega, x_i(\omega))\nu(d\omega)
  = V_i(x_i),
  \]
  $x$ is fine dominated by $b$ in $\widetilde{\mathcal E}(r^A, x)$.
  The proof is completed.
  \end{proof}

  We conclude this section with a characterization of the
  private core of $\widetilde{\mathcal E}$ by the set of all
  privately non-dominated allocations of $\widetilde{\mathcal E}
  (r,x)$, which is new even for finite dimensional commodity space.

  \begin{theorem}\label{thm:pricorenondom}
  Assume that $\widetilde{\mathcal E}$ satisfies
  \emph{(A$_1$)-(A$_3$)} and \emph{(A$_5$)}. Then $x$ is in
  the private core of $\widetilde{\mathcal E}$ if and only
  if $x$ is a privately non-dominated allocation in
  $\widetilde{\mathcal E}(r,x)$ for any $r\in [0, 1]^n$.
  \end{theorem}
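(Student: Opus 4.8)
The plan is to establish the two implications of the ``if and only if'' separately, passing freely between $\widetilde{\mathcal E}$, its associated continuum economy $\widetilde{\mathcal E}_c$, and the perturbed economies via the equal‑treatment/averaging correspondence; throughout, $f$ will denote the simple allocation of $\widetilde{\mathcal E}_c$ with $f(t,\cdot)=x_i$ for $t\in I_i$.

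First I would prove ``$x$ privately non‑dominated in every $\widetilde{\mathcal E}(r,x)$ $\Rightarrow$ $x$ in the private core of $\widetilde{\mathcal E}$'' by contraposition. Suppose a coalition $S\subseteq N$ privately blocks $x$ in $\widetilde{\mathcal E}$ via $(y_i)_{i\in S}$. Since each $P_i(x_i)$ is $\|\cdot\|^\Omega$‑open by (A$_1$), $\tilde y_i:=(1-\delta)y_i$ still satisfies $V_i(\tilde y_i)>V_i(x_i)$ once $\delta>0$ is small; then $\sum_{i\in S}a_i(\omega)-\sum_{i\in S}\tilde y_i(\omega)\ge\delta\sum_{i\in S}a_i(\omega)\ge\delta\sum_{i\in S}\inf\{a_i(\omega'):\omega'\in\Omega\}=:z$ for a.a. $\omega$, and $z$ is a strictly positive element, constant in $\omega$, by (A$_5$). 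Take $r_i=1$ for $i\in S$ and $r_i=0$ for $i\notin S$, and for $i\notin S$ put $y_i:=x_i+z/n$, which is $\mathcal F_i$‑measurable and strictly improving by (A$_2$); then $(\tilde y_i)_{i\in S}\cup(y_i)_{i\notin S}$ is feasible in $\widetilde{\mathcal E}(r,x)$ — the slack $z$ absorbs the $|N\setminus S|$ extra bundles $z/n$ — and improves every agent, so $x$ is privately dominated in $\widetilde{\mathcal E}(r,x)$. (Alternatively, one may first enlarge the blocking coalition in $\widetilde{\mathcal E}_c$ to measure $>1-\delta/n$ using Theorem \ref{thm:infintevind}, take $r_i=n\mu(S_i)\in(1-\delta,1]$, and pass through $z_i=r_ig_i+(1-r_i)x_i$ by concavity (A$_3$); that variant also shows the participation can be chosen close to complete.)

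For the converse, ``$x$ in the private core $\Rightarrow$ $x$ privately non‑dominated in every $\widetilde{\mathcal E}(r,x)$'', I would again contrapose: given $r\in[0,1]^n$ and a profile $y$ feasible in $\widetilde{\mathcal E}(r,x)$ with $V_i(y_i)>V_i(x_i)$ for all $i$ — so $\sum_i y_i(\omega)\le\sum_i r_ia_i(\omega)+\sum_i(1-r_i)x_i(\omega)$ a.e. — I must exhibit a coalition privately blocking $x$ in $\widetilde{\mathcal E}$. Two cases are immediate: if $r=(1,\dots,1)$ then $\widetilde{\mathcal E}(r,x)=\widetilde{\mathcal E}$ and $N$ blocks via $y$; if $r=(0,\dots,0)$ then $\sum_i y_i\le\sum_i x_i\le\sum_i a_i$ by feasibility of $x$ and $N$ again blocks via $y$. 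For general $r$, the plan is to replace each $y_i$ by a $\mathcal F_i$‑measurable, still strictly improving, nonnegative bundle — built from convex combinations $\lambda y_i+(1-\lambda)x_i$ (these are feasible in $\widetilde{\mathcal E}(\lambda r,x)$, so one may let $\lambda\downarrow0$) and lattice operations such as $y_i\vee x_i$, $y_i\wedge a_i$ and the positive parts $(a_i-x_i)^+$, $(x_i-a_i)^+$, all of which remain $\mathcal F_i$‑measurable — and then to isolate a sub‑coalition $S\subseteq N$ on which the aggregate of the new bundles is $\le\sum_{i\in S}a_i$; the estimate $\big(\sum_i y_i-\sum_i a_i\big)^{+}\le\sum_i(1-r_i)(x_i-a_i)^{+}$ shows the ``excess demand'' is localized on the agents holding strictly more than their endowment, which both guides the choice of $S$ (those agents must be discarded — for such $i$ one uses (A$_2$) to test whether $\{i\}$ itself already blocks $x$) and is where the delicate bookkeeping lies.

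The step I expect to be the main obstacle is precisely this sub‑coalition construction in the converse. The difficulty is structural rather than technical: in a general Banach lattice $V_i(y_i)>V_i(x_i)$ gives no pointwise relation between $y_i$ and $x_i$, and $\widetilde{\mathcal E}(r,x)$ may carry strictly more aggregate endowment than $\widetilde{\mathcal E}$ — exactly when $\sum_i(1-r_i)(x_i-a_i)\not\le 0$ — so the grand coalition need not block and one genuinely has to peel off a proper sub‑coalition using only the vector lattice structure together with (A$_2$) and (A$_3$). The remaining ingredients — the two extreme cases, the measurability bookkeeping, the transfer between $\widetilde{\mathcal E}$ and $\widetilde{\mathcal E}_c$, and the entire first implication — are routine.
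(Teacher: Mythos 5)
Your first implication (``blocked in $\widetilde{\mathcal E}$ $\Rightarrow$ dominated in some $\widetilde{\mathcal E}(r,x)$'', taking $r_i=1$ on the blocking coalition $S$ and $r_i=0$ off it, extracting the slack $z>0$ via (A$_5$) and the $\|\cdot\|^\Omega$-openness of $P_i(x_i)$, and bribing the outsiders with $z/n$ using (A$_2$)) is correct, and is if anything cleaner than the paper's route, which passes through the continuum economy, Theorem \ref{thm:infintevind} and the weights $r_i=n\mu(S_i)$ exactly as in your parenthetical variant.

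The converse is where you stall, and your diagnosis of the obstruction is exactly right --- but you should know that the paper does not overcome it either. The paper's entire proof of ``private core $\Rightarrow$ non-dominated in every $\widetilde{\mathcal E}(r,x)$'' is the assertion that feasibility of $x$ yields $\sum_{i}\{r_ia_i+(1-r_i)x_i\}\le\sum_{i}a_i$, i.e.\ $\sum_i(1-r_i)(x_i-a_i)\le 0$; this is precisely the inequality you observe can fail when the $r_i$ differ across agents (it is valid only for equal weights). Moreover, the sub-coalition construction you were hoping for cannot exist, because the implication itself fails under (A$_1$)--(A$_3$) and (A$_5$): take one state, $Y=\R^2$, $n=2$, $a_1=(1,0)$, $a_2=(0,1)$, $u_1(c)=u_2(c)=\sqrt{c^{(1)}}+\sqrt{c^{(2)}}$, and $x_1=(0.3,0.3)$, $x_2=(0.7,0.7)$. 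Then $x$ is in the core: each singleton coalition attains utility at most $1<2\sqrt{0.3}<2\sqrt{0.7}$, and the grand coalition cannot improve since $u_1^2+u_2^2\le 2\bigl(\sum_i\sum_j y_i^{(j)}\bigr)\le 4$ for every feasible $(y_1,y_2)$ while $(2\sqrt{0.3})^2+(2\sqrt{0.7})^2=4$. Yet in $\widetilde{\mathcal E}((1,0),x)$ the aggregate endowment is $a_1+x_2=(1.7,0.7)$, and $y_1=0.3\,(1.7,0.7)$, $y_2=0.7\,(1.7,0.7)$ gives $u_1(y_1)\approx 1.172>2\sqrt{0.3}$ and $u_2(y_2)\approx 1.791>2\sqrt{0.7}$, so the grand coalition privately dominates $x$ there. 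This is consistent with Theorem \ref{thm:radequnondom}, by which non-domination in every $\widetilde{\mathcal E}(r,x)$ characterizes the Walrasian allocations, whereas a two-agent core is generically strictly larger than the Walrasian set. So the ``only if'' direction is salvageable only for equal components $r_1=\dots=r_n$, or under the additional hypotheses that collapse the private core to the Walrasian allocations; as the statement stands, neither your peeling strategy nor the paper's one-line argument can close it.
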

  \begin{proof}
  Let $x$ be in the private core of $\widetilde{\mathcal E}$. Suppose
  that $x$ is privately dominated in $\widetilde{\mathcal E}(r,x)$
  for some $r= (r_1, ..., r_n)\in [0, 1]^n$. Then, there is an allocation
  $(y_1, ..., y_n)$ such that $V_i(y_i)> V_i(x_i)$ for each $i\in N$
  and $\sum_{i\in N}y_i(\omega)\le \sum_{i\in N}\{r_i a_i(\omega)+
  (1- r_i)x_i(\omega)\}$ for almost all $\omega\in \Omega$. Since $x$
  is feasible, $\sum_{i\in N}y_i(\omega)\le \sum_{i\in N} a_i(\omega)$
  for almost all $\omega\in \Omega$, a contradiction with the fact
  that $x$ is in the private core of $\widetilde{\mathcal E}$.

  Conversely, let $x$ be a privately non-dominated allocation in
  $\widetilde{\mathcal E}(r,x)$ for any $r\in [0, 1]^n$. Assume that
  $x$ is not in the private core of $\widetilde{\mathcal E}$. Then,
  the simple function $f$ on $I$ is not in the private core of
  $\widetilde{\mathcal E}_c$ Note
  that the conclusion of Theorem \ref{thm:infintevind} is true in
  $\widetilde{\mathcal E}_c$ under (A$_1$)-(A$_3$) and (A$_5$). Then,
  the conclusion can be derived by an argument similar to that in
  Theorem \ref{thm:radequnondom}. 
  \end{proof}

  Let $\widetilde{\mathcal E}(s)$ be a symmetric information economy
  which is identical with $\widetilde{\mathcal{E}}$  except for the
  information for each agent being $\mathcal{F}_i^s= \bigvee_{i\in N}
  \mathcal{F}_i$.

  \begin{corollary}\label{cor:weakfine}
  Assume that $\widetilde{\mathcal E}$ satisfies \emph{(A$_1$)-(A$_3$)}
  and \emph{(A$_5$)}. Then, a feasible allocation $x$ is weak fine
  dominated in $\mathcal{E}(r,x)$ for any $r\in [0, 1]^n$ if only
  if $x$ is in the private core of $\widetilde{\mathcal E}(s)$.
  \end{corollary}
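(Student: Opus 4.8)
The plan is to obtain Corollary~\ref{cor:weakfine} by applying Theorem~\ref{thm:pricorenondom} not to $\widetilde{\mathcal E}$ but to the pooled-information economy $\widetilde{\mathcal E}(s)$, and then to recognize that ``$x$ privately non-dominated in $\widetilde{\mathcal E}(s)(r,x)$'' is literally the same condition as ``$x$ weak fine non-dominated in $\widetilde{\mathcal E}(r,x)$''. First I would check that $\widetilde{\mathcal E}(s)$ is an admissible economy to which Theorem~\ref{thm:pricorenondom} applies: since $\mathcal F_i^s=\bigvee_{j\in N}\mathcal F_j$ refines each $\mathcal F_j$, every $a_i$, being constant on the atoms of $\mathcal F_i$, is constant on the atoms of $\mathcal F_i^s$, so $\widetilde{\mathcal E}(s)$ fits the model; and assumptions (A$_1$)-(A$_3$) and (A$_5$) concern only the utility functions $U_i(\omega,\cdot)$ and the endowments $a_i$, both of which are untouched when the information partitions are replaced, so $\widetilde{\mathcal E}(s)$ inherits (A$_1$)-(A$_3$) and (A$_5$) from $\widetilde{\mathcal E}$. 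Theorem~\ref{thm:pricorenondom} then gives: $x$ is in the private core of $\widetilde{\mathcal E}(s)$ if and only if $x$ is privately non-dominated in $\widetilde{\mathcal E}(s)(r,x)$ for every $r\in[0,1]^n$.

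Next I would unwind the right-hand condition. In $\widetilde{\mathcal E}(s)$ each agent's admissible bundles are exactly the $\bigvee_{j\in N}\mathcal F_j$-measurable elements of $(Y_+)^\Omega$, and the endowment perturbation $a_i\mapsto a_i(r_i,x_i)=r_ia_i+(1-r_i)x_i$ leaves the information structure alone; hence $\widetilde{\mathcal E}(s)(r,x)$ is precisely the symmetric-information version of $\widetilde{\mathcal E}(r,x)$. Unfolding the definition of ``privately dominated'' in that economy, $x$ is privately dominated in $\widetilde{\mathcal E}(s)(r,x)$ exactly when there is an assignment $y=(y_1,\dots,y_n)$ with each $y_i$ being $\bigvee_{j\in N}\mathcal F_j$-measurable, $\sum_{i\in N}y_i(\omega)\le\sum_{i\in N}a_i(r_i,x_i)(\omega)$ for almost all $\omega$, and $V_i(y_i)>V_i(x_i)$ for all $i\in N$; and this is exactly the statement that $x$ is weak fine dominated in $\widetilde{\mathcal E}(r,x)$ --- note that $x_i$ is $\mathcal F_i$-measurable, hence $\bigvee_{j\in N}\mathcal F_j$-measurable, so $x$ itself qualifies as an assignment of the pooled economy. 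Negating both sides and combining with the first paragraph yields the corollary.

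I expect the one point requiring care --- rather than a genuine obstacle --- to be the bookkeeping around feasibility: ``weak fine non-dominated'' and membership in the private core are, by the stated conventions, predicated of \emph{feasible} allocations, and feasibility of $x$ in $\widetilde{\mathcal E}(r,x)$ means $\sum_{i\in N}r_ix_i(\omega)\le\sum_{i\in N}r_ia_i(\omega)$ for almost all $\omega$, which does not follow from feasibility of $x$ in $\widetilde{\mathcal E}$ for an arbitrary weight vector $r$. This is handled exactly as in the proofs of Theorems~\ref{thm:radequnondom} and~\ref{thm:pricorenondom}: ``privately (non-)dominated in a perturbed economy'' refers only to the absence of a feasible dominating allocation \emph{of that economy} and does not itself require $x$ to be feasible there, while for $r=(1,\dots,1)$ one has $\widetilde{\mathcal E}(r,x)=\widetilde{\mathcal E}$, so the feasibility of $x$ is available precisely where it is invoked. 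With these conventions matched, the corollary is a direct specialization of Theorem~\ref{thm:pricorenondom} to the economy $\widetilde{\mathcal E}(s)$.
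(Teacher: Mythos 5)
Your proposal is correct and is exactly the derivation the paper intends: the corollary is stated without proof as an immediate consequence of Theorem~\ref{thm:pricorenondom} applied to the pooled-information economy $\widetilde{\mathcal E}(s)$, using precisely the identification of ``privately (non-)dominated in $\widetilde{\mathcal E}(s)(r,x)$'' with ``weak fine (non-)dominated in $\widetilde{\mathcal E}(r,x)$'' that you spell out. Your checks that $\widetilde{\mathcal E}(s)$ inherits (A$_1$)--(A$_3$) and (A$_5$) and that a feasible allocation of $\widetilde{\mathcal E}$ is $\bigvee_{j\in N}\mathcal F_j$-measurable (and your reading of the statement as ``weak fine non-dominated \ldots if and only if'') supply exactly the bookkeeping the paper leaves implicit.
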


  \begin{remark}
  In Section \ref{sec:Equitheodiseco}, we have applied the
  equivalence theorems and the Vind's type theorem in Section
  \ref{sec:privatecore}
  to establish characterizations of Walrasian expectations
  allocations for an asymmetric economy with finitely many states
  of nature. The perspective reader may wonder why such
  characterizations are not discussed in the scenario where
  infinitely many states of nature are considered. Although Vind's
  theorem is valid for an economy with an arbitrary probability
  space of states of nature, these equivalence theorem are only
  valid for the scenario where finitely many states of nature
  are considered. So, the techniques employed in Section
  \ref{sec:Equitheodiseco} do not allow us to establish similar
  results for characterizing Walrasian expectations allocations
  in the scenario with infinitely many states of nature. However,
  the difficulty may be overcome by applying new and different
  techniques, which could be done in the future work. \qed
  \end{remark}

  \vskip 2em
  \noindent
  (A.2) \emph{Discussion on ATY-properness}

  \vskip 2em
  \noindent
  In this subsection, we discuss about the ATY-properness of utility
  functions.
  Let $A_i(x_i)= \{y_i\in L_i: U_i(\omega, y_i(\omega))> U_i(\omega,
  x_i(\omega)) \mbox{ for each } \omega\in \Omega \}$ for all $x_i\in
  L_i$. So, $A_i(x_i)\subseteq P_i(x_i)$. By (A$_2$), $x_i\in$
  ${\rm cl} A_i(x_i)$. Then, it is clear from the proof of Theorem
  \ref{thm:core-wal2} that we used the following fact from (A$_8$):
  \begin{enumerate}
  \item[(A.2.1)] If $x$ is a privately Pareto optimal allocation,
  then there is a convex and $\|\cdot\|^\Omega$-open subset $W_i$ of
  $Y^\Omega$ such that $\emptyset \neq W_i\cap L_i \subseteq A_i(x_i)$
  and $\|\cdot\|^\Omega$-${\rm cl} A_i(x_i)\subseteq\|\cdot
  \|^\Omega$-${\rm cl} W_i$ for each $i\in N$.
  \end{enumerate}

  We now introduce the definition of ATY-properness of a random
  utility function similar to that of preference relation.

  \begin{definition}
  For $\omega\in \Omega$, the utility function $U_i(\omega, \cdot)$ is
  called \emph{ATY-proper} at $x\in Y_+$ if there exists a convex
  subset $\widetilde{A}_i(\omega, x)$ of $Y$ with non-empty
  $\|\cdot\|$-interior such that $\widetilde{A}_i(\omega, x)\cap
  Y_+= A_i(\omega, x)$ and (int$\widetilde{A}_i(\omega, x)$)$\cap
  Y_+\neq \emptyset$, where $A_i(\omega, x)= \{y\in Y_+: U_i(\omega, y)>
  U_i(\omega, x)\}$.
  \end{definition}

  We need the following assumptions of agents' utility functions.

  \begin{itemize}
  \item[(A$_8^\prime$)] \emph{State Properness}. If $(x_i)_{i\in N}$ is a
  privately Pareto optimal allocation in $\mathcal{E}$, then $U_i(\omega,
  \cdot)$ is ATY-proper at $x_i(\omega)$ for each $\omega\in \Omega$ and
  $i\in N$.
  \end{itemize}

  \begin{itemize}
  \item[(A$_8^{\prime\prime}$)] \emph{Measurability}. For each $i\in N$
  and $x\in Y_+$, $U_i(\cdot, x)$ is $\mathcal{F}_i$-measurable.
  \end{itemize}

  Replacing (A$_8$) with (A$_8^\prime$) and (A$_8^{\prime\prime}$),
  we can derive (A.2.1) by taking $W_i= \prod_{\omega\in \Omega}$
  int$\widetilde{A}_i (\omega, x_i(\omega))$ for each $i\in N$,
  where $\widetilde{A}_i (\omega, x_i(\omega))= \widetilde{A}_i
  (\omega^\prime, x_i(\omega^\prime))$ if $\omega, \omega' \in R
  \in \mathcal{F}_i$. Alternatively, similar to the $v$-properness
  of the preference relation in \cite{Aliprantis-Tourky:00}, one can
  assume in the definition of ATY-properness of $U_i(\omega, \cdot)$
  that $x_i(\omega)+y_i(\omega)\in$
  int$\widetilde{A}_i(\omega, x_i(\omega))$ for some $y_i\in L_i$ in the
  place of (int$\widetilde{A}_i(\omega, x)$)$\cap Y_+\neq \emptyset$
  and we obtain (A.2.1) by using (A$_8^\prime$) only. Analogous assumptions
  are not sufficient for (A$_9$), because the preference $P_i(x_i)$
  coming from expected utility plays an important role. However,
  (A$_8$) and (A$_9$) directly follow from the $v$-properness of the
  preference relation in a full information economy. To see this,
  consider a correspondence $S_i:Y_+^\Omega\rightrightarrows Y_+^\Omega$
  defined by $S_i(x)= \{y\in Y_+^\Omega: V_i(y)> V_i(x)\}$ for $i\in N$.
  By (A$_3$), $S_i(x_i)$ is convex. Choose a non-zero element $v_i\in L_i$.
  By the $v_i$-properness assumption, there is a convex-valued correspondence
  $\widetilde{S}_i:Y_+^\Omega \rightrightarrows Y^\Omega$ such that for
  each $x\in Y_+^\Omega$ the following hold: (i) $x+ v_i\in
  \|\cdot\|^\Omega$-int$\widetilde{S}_i(x)$; and (ii) $\widetilde{S}_i(x)\cap
  Y_+^\Omega= S_i(x)$. Note that if $x\in L_i$, then we have $x+ v_i\in
  (\|\cdot\|^\Omega$-int$\widetilde{S}_i(x))\cap L_i$ and $\widetilde{S}_i(x)\cap
  L_i= P_i(x)$, and so $P_i$ is ATY-proper. Applying a similar argument,
  one can derive the strong ATY-proper preference relation by taking
  $v_i \in L_i \cap L(\sum_{i\in N} a_i)$ for all $i\in N$.

  \vskip 2em
  \noindent
  (A.3) \emph{Mathematical preliminaries}
  \vskip 2em
  \noindent
  Given a real vector space $X$, a function $\|\cdot\|:X \to [0,\infty)$
  satisfying (i) $\|x\|\geq 0$ for all $x\in X$ and $\|x\|= 0$ if
  and only if $x= 0$; (ii) $\|\alpha x\|= |\alpha|\|x\|$ for all
  $x\in X$ and $\alpha\in\mathbb{R}$; (iii)  $\|x+ y\|\leq \|x\|+
  \|y\|$ for all $x, y\in X$, is called a \emph{norm} on $X$, and
  $(X, \|\cdot\|)$ is called a \emph{normed space}. If $A$ and $Z$ are
  subsets of a normed space $(X, \|\cdot\|)$ with $A \subseteq Z$,
  the closure and the interior of $A$ in the relative topology
  generated by norm on $Z$ are denoted by $\|\cdot \|_Z$-cl$A$ and
  $\|\cdot \|_Z$-int$A$ respectively. When $Z=X$, without confusion,
  we simply write $\|\cdot \|$-cl$A$ and $\|\cdot \|$-int$A$ instead.
  In addition, if $A$ is convex with $\|\cdot \|$-int$A \ne
  \emptyset$, then $\|\cdot \|$-cl$A$ =
  $\|\cdot \|$-cl($\|\cdot \|$-int$A$). If $X$ is a real
  vector space and $\leq$ is a partial order on $X$, then the pair
  $(X, \leq)$ is called an \emph{ordered vector space} if for any
  $x, y, z\in X$ and any positive real number $\alpha$, $x\geq y$
  implies that $\alpha x+ z\geq \alpha y+ z$. Recall that a \emph{Riesz
  space} is an ordered vector space that is also a lattice, that is,
  every pair of elements $x, y\in X$ has a supremum $x \vee y$ and
  an infimum $x\wedge y$. For any element $x$ of a Riesz space,
  $|x|$ stands for the absolute value of $x$ and is expressed in
  the form $|x|= x^++x^-$, where $x^+= x\vee 0$ and $x^-= (-x)\vee 0$
  are positive and negative parts of $x$ respectively.
  Note that $x= x^+-x^-$ and $x^+\wedge x^-= 0$. An element $x\in X$
  is called a \emph{positive element} of $X$ if $x\geq 0$ and
  $X_+=\{x\in X: x \geq 0\}$. We write $x > 0$, if $x\in X_+$ and
  $x\ne 0$. For two points $x,y \in Y$, $x>y$ means $x-y >0$. If
  $x\leq y$ in $X$, then $[x, y]$ denotes the order interval
  $\{z\in X: x\leq z\leq y\}.$ A subset $A$ of $X$ is
  called \emph{order bounded} if $A\subseteq [x, y]$ for some $x, y
  \in X$. A norm is called a \emph{lattice norm} if $|x|< |y|$
  implies $\|x\|\leq \|y\|$. A \emph{normed Riesz space} is a Riesz
  space with a lattice norm. A complete normed Riesz space
  is called a \emph{Banach lattice}. A lattice norm on a Riesz
  space is an \emph{$M$-norm} if $\|x \vee y\|= \max\{\|x\|, \|y\|\}$
  for any $x, y\geq 0$. An \emph{$M$-space} is a normed Riesz space
  with an $M$-norm. A norm complete $M$-space is an \emph{$AM$-space}.
  A subset $A$ of a Riesz space is called \emph{solid} if $|x|< |y|$
  and $y\in A$ imply $x\in A$. A solid vector subspace of a Riesz
  space is called an \emph{ideal}. A pair $(X, \tau)$ is called a
  \emph{locally solid Riesz space} if $X$ is a Riesz space and $\tau$
  is a linear topology on $X$ such that $\tau$ has a basis at zero
  consisting of solid neighborhoods.

  A linear functional $f: X\to \mathbb R$ on a Riesz space $X$
  is called \emph{ordered bounded} if $f(A)$ is bounded in $\mathbb
  R$ for any order bounded subset $A$ of $X$. Recall that an
  \emph{order dual} $\widetilde{X}$ of a Riesz space $X$ is an
  ordered vector space consisting of all order bounded linear
  functionals on $X$ under the usual algebraic operations, and the
  ordering $f\leq g$ if $\langle f, x \rangle \leq \langle g, x
  \rangle$ for all $x\in X_+$. If $(X, \|\cdot \|)$ is a normed space,
  its \emph{norm dual} $X^\ast$ is the set of continuous linear
  functionals on $X$ equipped with the norm $\|\cdot \|$ defined by
  $\|f\|= \sup \{|\langle f, x\rangle|: x\in X, \|x\|\leq 1\}$.
  If $(X, \|\cdot \|)$ is a locally solid Riesz space, $X^\ast$
  is an ideal in $\widetilde{X}$. If $(X, \|\cdot\|)$ is a Banach
  lattice, then $X^\ast = \widetilde{X}$ and $\|\cdot\|$-topology
  is the finest locally solid topology on $X$. The order dual
  $\widetilde{X}$ of any Riesz space $X$ is an order complete
  Riesz space, and its lattice operations are given by
  \[
  \langle f\vee g ,x \rangle= \sup\{\langle f,y\rangle+ \langle
  g, z
  \rangle: y,z\in X_+ \mbox{ and }
  y+z= x\}
  \]
  and
  \[
  \langle f\wedge g, x\rangle = \inf\{\langle f, y\rangle+ \langle
  g, z \rangle: y,z\in X_+ \mbox{ and } y+z= x\}
  \]
  for all $f, g\in \widetilde{X}$, and $x\in X_+$. These two
  equalities are called the \emph{Riesz-Kantorovich formulas}.
  The \emph{Hahn-Banach Theorem}
  claims that a continuous linear functional defined on a subspace
  of a normed space can be extended to a continuous linear functional
  on the entire space, with its norm preserved. In addition, the
  separation theorem says for any two disjoint (non-empty) convex
  subsets $A$ and $B$ in a Banach lattice $(X, \|\cdot\|)$, if
  either $A$ or $B$ has non-empty interior, then there exists a
  non-zero $f\in X^\ast$ that separates $A$ and $B$.

  Let $X$ be a Banach lattice. If $\Omega$ is a finite set, then
  $X^\Omega$ is endowed with pointwise algebraic operations, the
  pointwise order and the product norm of $X$ is a Banach lattice.
  If $x\in
  (X, \|\cdot\|)^\Omega$ (i.e., $X^\Omega$ equipped with the
  $\|\cdot\|^\Omega$-topology) and $g\in ((X, \|\cdot\|)^\Omega)^\ast$,
  then there is an element $f\in ((X, \|\cdot\|)^\ast)^\Omega$ such that
  $\langle g, x\rangle= \sum_{\omega\in \Omega}\langle f(\omega),
  x(\omega)\rangle$ and vise-versa.
  For any $x\in X$, the \emph{principal ideal} generated
  by $x$ is defined by
  $L(x)= \{y\in X: |y|\leq n |x| \mbox{ for some } n\in \mathbb{N}\}$.
  A point $x\in X_+$ is called an \emph{order unit} of $X$ if $L(x)= X$,
  and a \emph{quasi-interior point} of $X_+$ if $L(x)$ is norm dense
  in $X$ (equivalently, $\langle f, x\rangle> 0$ for all $f \in
  X_+^\ast \setminus \{\bf 0\}$). If $x$ is a quasi-interior point
  of $X_+$, we write $x\gg 0$. An order unit is a quasi-interior
  point, but the converse is not true in general. Note that $L(x)$
  with the norm $\|y\|_{\hat{x}}= \inf\{\lambda>0: |y|< \lambda|x|\}$
  is an \emph{AM-space} with $x$ as an order unit.

  Let $(\Omega, \Sigma, \mu)$ be a measure space and $X$ a Banach
  lattice. A function $\phi:\Omega\to X$ that assumes only a finite
  number of values, say $x_1, ..., x_n$, is called a \emph{simple
  function} if $A_i= \phi^{-1}(\{x_i\})\in \Sigma$ for each $i$. As
  usual, the formula $\phi= \sum_{i= 1}^n x_i {\bf 1}^{A_i}$ is the
  standard representation of $\phi$, where
  \[
  {\bf 1}^{A_i}(t)= \left\{
  \begin{array}{ll}
  1, & \mbox{if $t\in A_i$;}\\[0.5em]
  0, & \mbox{otherwise,}
  \end{array}
  \right.
  \]
  is called the \emph{characteristic function} of $A_i$ on $\Omega$.
  Moreover, we say that a function $f:\Omega\rightarrow X$ is
  \emph{$\mu$-measurable} if there exists a sequence of simple
  functions $\{\phi_n: n\ge 1\}$ such that $\lim \|f(t)-\phi_n(t)\|=
  0$ for almost all $t\in \Omega$. A $\mu$-measurable function
  $f:\Omega\rightarrow X$ is said to be \emph{Bochner integrable}
  if there exists a sequence of simple functions $\{\phi_n: n\ge 1\}$
  such that the real measurable function $\|f(t)- \phi_n(t)\|$ is
  Lebesgue integrable for each $n$ and $\lim\int_\Omega
  \|f(t)- \phi_n(t)\|d\mu(t)= 0$. In this case, for each
  $E\in \Sigma$, the Bochner integral of $f$ over $E$ is defined by
  $\int_E f(t)\mu(t)= \lim\int_E \phi_n(t)d\mu(t)$,
  where the last limit is in the norm topology on $X$.

   \end{document}